\documentclass{article}%
\usepackage{amssymb}
\usepackage{amsmath}
\usepackage{amsfonts}
\usepackage{graphicx}%
\setcounter{MaxMatrixCols}{30}
%TCIDATA{OutputFilter=latex2.dll}
%TCIDATA{Version=5.00.0.2557}
%TCIDATA{CSTFile=LaTeX article (bright).cst}
%TCIDATA{Created=Wednesday, January 07, 2004 03:44:32}
%TCIDATA{LastRevised=Thursday, June 23, 2011 09:55:40}
%TCIDATA{<META NAME="GraphicsSave" CONTENT="32">}
%TCIDATA{<META NAME="SaveForMode" CONTENT="1">}
%TCIDATA{<META NAME="DocumentShell" CONTENT="Standard LaTeX\Blank - Standard LaTeX Article">}
%TCIDATA{Language=American English}
\newtheorem{theorem}{Theorem}[section]

\newtheorem{corollary}[theorem]{Corollary}

\newtheorem{definition}[theorem]{Definition}
\newtheorem{example}[theorem]{Example}

\newtheorem{lemma}[theorem]{Lemma}

\newtheorem{proposition}[theorem]{Proposition}

\newenvironment{proof}[1][Proof]{\noindent\textbf{#1.} }{\ \rule{0.5em}{0.5em}}
\begin{document}

\title{The Theory of Prime Ideals of Leavitt Path Algebras over Arbitrary Graphs}
\author{Kulumani M. Rangaswamy\\Department of Mathematics, University of Colorado\\Colorado Springs, Colorado 80918, USA\\E-mail: krangasw@uccs.edu}
\maketitle

\begin{abstract}
Given an arbitrary graph $E$ and a field $K$, the prime ideals as well as the
primitive ideals of the Leavitt path algebra $L_{K}(E)$ are completely
characterized in terms of their generators. \ The stratification of the prime
spectrum of $L_{K}(E)$ is indicated with information on its individual
stratum. Necessary and sufficient conditions are given on the graph $E$ under
which every prime ideal of the Leavitt path algebra $L_{K}(E)$ is primitive.
Leavitt path algebras with Krull dimension zero are characterized and those
with various prescribed Krull dimension are constructed. The minimal prime
ideals of $L_{K}(E)$ are described in terms of the graphical properties of $E$
and using this, complete descriptions of the height one as well as the
co-height one prime ideals of $L_{K}(E)$ are given.

\end{abstract}

\section{ Introduction}

The Leavitt path algebras were introduced in \cite{Abrams1} and \cite{Ara7} as
algebraic analogs of the $C^{\ast}$-algebras\ ([19]) and the study of their
algebraic structure has been the subject of a series of papers in recent years
(see, for e.g., \cite{Abrams1} - \cite{Aranda Pino 11}, \cite{Goodearl 13},
\cite{Tomforde 20}). In this paper we develop the theory of the prime ideals
of the Leavitt path algebras $L_{K}(E)$ for an arbitrary sized graph $E$. Here
the graph $E$ is arbitrary in the sense that no restriction is placed either
on the number of vertices in $E$ or on the number of edges emitted by a single
vertex in $E$ \ (such as row-finite or countable). We first give complete
characterizations,\ in terms of the generators, of the prime ideals as well as
the primitive ideals of the Leavitt path algebra $L_{K}(E)$. We also describe
the stratification of the prime spectrum $Spec(L_{K}(E))$ with information
about its individual stratum many of which are homeomorphic to
$Spec(K[x,x^{-1}])$. Our investigation shows that the non-graded prime ideals
of a Leavitt path algebra $L_{K}(E)$ are always primitive and, as also noted
in \cite{Aranda Pino 9}, there are graded prime ideals of $L_{K}(E)$ which are
not primitive. This is in contrast to the situation for the graph C*-algebras
in which there is no distinction between the prime and the primitive ideals
(see \cite{Dixmier 12},\cite{Hong 14}, \cite{Raeburn 19}). Using our
characterization of right (= left) primitive ideals of a Leavitt path algebra,
we give necessary and sufficient conditions on the graph $E$ under which every
prime ideal of $L_{K}(E)$ is primitive. Examples are constructed to illustrate
the different possibilities for the prime and the primitive spectrum of
$L_{K}(E)$. In the case when $E$ is a row-finite graph, it was shown in
\cite{Aranda Pino 9} \ that a bijection exists between the set of prime ideals
of the Leavitt path algebra $L_{K}(E)$ \ and a certain set which involves
subsets of vertices (called maximal tails ) and the prime spectrum of
$K[x,x^{-1}]$. This was analogous to the work done in \cite{Hong 14} for graph
$C^{\ast}$- algebras. In this paper, we extend and sharpen the bijective
correspondence of \cite{Aranda Pino 9} to the case when $E$ is an
arbitrary-sized graph. There are a number of applications of our main result
(Theorem \ref{Main Theorem}). First we describe those graphs $E$ for which
every non-zero prime ideal of $L_{K}(E)$ is maximal and this leads to the
characterization the Leavitt path algebras whose Krull dimension is zero.
Examples are constructed of Leavitt path algebras $L_{K}(E)$ with various
prescribed finite or infinite Krull dimension. As another application, we show
that a graph $E$ satisfies the Condition (K) if and only if every prime ideal
of $L_{K}(E)$ is graded. Next we describe the minimal prime ideals of a
Leavitt path algebra. The height $1$ prime ideals play an impotant part in the
study of commutative rings and algebraic geometry. As a first step to examine
their role in the study of Leavitt path algebras, we characterize not only the
height $1$ prime ideals but also the co-height $1$ prime ideals of $L_{K}(E)$
in terms of the graphical properties of $E$. Finally, we also consider when a
prime homomorphic image of a Leavitt path algebra is again a Leavitt path algebra.

\section{Preliminaries}

For the general notation and terminology used in this paper, we refer to
\cite{Abrams1},\cite{Aranda Pino 9} and \cite{Raeburn 19}. \ We give a short
description of some of the concepts that we will be using. A directed graph
$E=(E^{0},E^{1},r,s)$ consists of a set $E^{0}$ of vertices, a set $E^{1}$ of
edges and maps $r,s$ from $E^{1}$ to $E^{0}$. For each $e\in E^{1}$, $s(e)=u$
is called the \textit{source} of $e$ and $r(e)=v$ the \textit{range} of $e$
and $e$ is called an edge from $u$ to $v$. \ All the graphs \ that we consider
in this note are arbitrary. \ A vertex $v$ is called a \textit{sink} if it
emits no edges. It is called a \textit{regular vertex} if $0<|s^{-1}%
(v)|<\infty$. If $|s^{-1}(v)|=\infty$, we say $v$ is an \textit{infinite
emitter}.

Given a graph $E$, ($E^{1})^{\ast}$ denotes the set of symbols $e^{\ast}$ one
for each $e\in E^{1}$ called the ghost edges.

DEFINITION: Let $E$ be a directed graph and $K$ be any field. The
\textit{Leavitt path algebra} $L_{K}(E)$ of the graph $E$ with coefficients in
$K$ is the $K$-algerbra generated by a set $\{v:v\in E^{0}\}$ of pairwise
orthogonal idempotents together with a set of variable $\{e,e^{\ast}:e\in
E^{1}\}$ which satisfy the following conditions:

(1) \ $s(e)e=e=er(e)$ for all $e\in E^{1}$.

(2) $r(e)e^{\ast}=e^{\ast}=e^{\ast}s(e)\infty$\ for all $e\in E^{1}$.

(3) (The "CK-1 relations") For all $e,f\in E^{1}$, $e^{\ast}e=r(e)$ and
$e^{\ast}f=0$ if $e\neq f$.

(4) (The "CK-2 relations") For every regular vertex $v\in E^{0}$,
\[
v=\sum_{e\in E^{1},s(e)=v}ee^{\ast}.
\]

A \ \textit{path} $\mu$ is a finite sequence of edges $e_{1}...e_{n}$ with
$r(e_{i})=s(e_{i+1})$ for all $i=1,...n-1$. $\mu^{\ast}$ denotes the
corresponding ghost path $e_{n}^{\ast}...e_{1}^{\ast}$. The path $\mu$ is said
to be a \textit{closed path} if $r(e_{n})=s(e_{1})$. A closed path
$e_{1}...e_{n}$ is said to be a \textit{cycle based at }$v$ if $s(e_{1})=v$
and $s(e_{i})\neq s(e_{j})$ for $i\neq j$.

Every element in a Leavitt path algebra $L_{K}(E)$ can be written in the form
$%
%TCIMACRO{\tsum \limits_{i=1}^{n}}%
%BeginExpansion
{\textstyle\sum\limits_{i=1}^{n}}
%EndExpansion
k_{i}\alpha_{i}\beta_{i}^{\ast}$ where $k_{i}\in K$, $n$ a positive integer
and $\alpha,\beta$ are paths in $E$ (see [\cite{Abrams1}])

An edge $f$ is called an \textit{exit} to a path $e_{1}...e_{n}$ if there is
an $i$ such that $s(f)=s(e_{i})$ and $f\neq e_{i}$. A graph $E$ satisfies
\textit{Condition (L)} if every cycle in $E$ has an exit. The graph $E$ is
said to satisfy the \textit{Condition (K)} if no vertex\textit{\ }$v$ in $E$
is the base of precisely one cycle, that is, either no cycle is based at $v$
or at least two are based at $v$.

The following well-known result turns out to be very useful in our
investigation: If there exists a cycle $c$ without exits and is based at a
vertex $v$ in a graph $E$, then the subring $vL_{K}(E)v\cong K[x,x^{-1}]$
under an isomorphism that maps $v$ to $1$, $c$ to $x$ and $c^{\ast}$ to
$x^{-1}$. \ Indeed this isomorphism is evident if one notices that a typical
element of $vL_{K}(E)v$ \ is a $K$-linear combination of elements of the form
$v\alpha\beta^{\ast}v$ which simplifies, since $c$ has no exits, to $v$%
,$c^{n}$ or $(c^{\ast})^{n}$ for some integer $n$.

If there is a path from a vertex $u$ to a vertex $v$, we write $u\geq v$. \ A
subset $H$ of vertices is called a \textit{hereditary} set if whenever $u\in
H$ and $u\geq v$ for some vertex $v$, then $v\in H$. A set of vertices $H$ is
said to be \textit{saturated} if, for any regular vertex $v$, $r(s^{-1}%
(v))\subset H$ implies $v\in H$. If $I$ is a two-sided ideal of $L_{K}(E)$, it
is easy to see that $I\cap E^{0}$ is a hereditary saturated subset of $E^{0}$.

For every non-empty subset $X$ of vertices in a graph $E$, we can define
\textit{the restricted subgraph }$E_{X}$ where $(E_{X})^{0}=X$ and
$(E_{X})^{1}=\{e\in E^{1}:s(e),r(e)\in X\}$.

We shall be making extensive use of the following concepts and results from
\cite{Tomforde 20}. A vertex $w$ is called a \textit{breaking vertex }of a
hereditary saturated subset $H$ if $w\in E^{0}\backslash H$ is an infinite
emitter with the property that $1\leq|r(s^{-1}(v))\cap(E^{0}\backslash
H)|<\infty$. The set of all breaking vertices of $H$ is denoted by $B_{H}$.
For any $v\in B_{H}$, $v^{H}$ denotes the element $v-\sum_{s(e)=v,r(e)\notin
H}ee^{\ast}$. Given a hereditary saturated subset $H$ and a subset $S\subset
B_{H}$, $(H,S)$ is called an admissible pair and $I_{(H,S)}$ denotes the ideal
generated by $H\cup\{v^{H}:v\in S\}$. It was shown in \cite{Tomforde 20} that
the graded ideals of $L_{K}(E)$ are precisely the ideals of the form
$I_{(H,S)}$ for some admissible pair $(H,S)$. \ Moreover, it was shown that
$I_{(H,S)}\cap E^{0}=H$ and $I_{(H,S)}\cap B_{H}=S$.

Given an admissible pair $(H,S)$, the corresponding \textit{Quotient Graph}
$E\backslash(H,S)$ is defined as follows:%
\begin{align*}
(E\backslash(H,S))^{0}  &  =(E^{0}\backslash H)\cup\{v^{\prime}:v\in
B_{H}\backslash S\};\\
(E\backslash(H,S))^{1}  &  =\{e\in E^{1}:r(e)\notin H\}\cup\{e^{\prime}:e\in
E^{1},r(e)\in B_{H}\backslash S\}
\end{align*}
Further $r$ and $s$ are extended to $(E\backslash(H,S))^{0}$ by setting
$s(e^{\prime})=s(e)$ and $r(e^{\prime})=r(e)^{\prime}$.

Notice that the elements $v^{\prime}$, where $v\in B_{H}\backslash S$, are all
sinks in the graph $E\backslash(H,S)$. \ Theorem 5.7 of \cite{Tomforde 20}
states that there is an epimorphism $\phi:L_{K}(E)\rightarrow L_{K}%
(E\backslash(H,S))$ with $\ker\phi=$ $I_{(H,S)}$ and that $\phi(v^{H}%
)=v^{\prime}$ for $v\in B_{H}\backslash S$. Thus $L_{K}(E)/I_{(H,S)}\cong
L_{K}(E\backslash(H,S))$. (This theorem has been established in \cite{Tomforde
20} under the hypothesis that $E$ is a graph with at most countably many
vertices and edges; however, an examination of the proof reveals that the
countability condition on $E$ is not utilized. So the Theorem 5.7 of
\cite{Tomforde 20} holds for arbitrary graphs $E$)

Let $H$ be a non-empty hereditary saturated set of vertices in a graph $E$. A
subset $M$ of $H$ is said to be a \textit{maximal tail }in\textit{\ }$H$
\textit{(see [9], [14]) }if it satisfies the following conditions:

(MT-1) If $v\in M$ and $u\in H$ with $u\geq v$, then $u\in M;$

(MT-2) If $v\in M$ is a regular vertex, then there is an $e\in E^{1}$ with
$s(e)=v$ and $r(e)\in M$;

(MT-3) For any two $u,v\in M$ there exists $w\in M$ such that $u\geq w$ and
$v\geq w$.

It is easy to see that $M\subset E^{0}$ satisfies both the (MT-1) and the
(MT-2) conditions if and only if $E^{0}\backslash M$ is a hereditary saturated
subset of $E^{0}$.

Given a vertex $v$ in a graph $E$, we attach two special sets of vertices.

\textbf{Define} $T(v)=\{w\in E^{0}:v\geq w\}$ and $M(v)=\{w\in E^{0}:w\geq
v\}$.

The set $T(v)$ (called the \textit{tree of }$v$ in the literature) is the
smallest hereditary set containing $v$. The set $M(v)$ is maximal tail
containing $v$ whenever $v$ is a sink, an infinite emitter or a regular vertex
lying on a cycle in $M(v)$. But when $v$ is a regular vertex not lying on a
cycle, $M(v)$ satisfies the MT-1 and MT-3 conditions , but may not satisfy the
MT-2 condition.

In this paper "ideal" means "two-sided ideal". An ideal $P$ of a ring $R$ is
called a left (right) \textit{primitive ideal} if it is the left (right)
annihilator of a simple left (right) $R$-module. A ring $R$ is called a left
(right) \textit{\ primitive ring} if $\{0\}$ is a right (left) primitive
ideal. \ 

Recall that an ideal $P$ of a ring $R$ is called a \textit{prime ideal} if,
given any ideals $A,B$ of $R$, $AB\subset P$ implies that either $A\subset P$
or $B\subset P$. \ If $R$ is a graded ring, graded by a group (such as the
Leavitt path algebra $L_{K}(E)$), then it was shown in (\cite{NV 18},
Proposition II.1.4) that a graded ideal $P$ will be a prime ideal, if $P$
satisfies the above property only for graded ideals $A$ and $B$. \ This
observation will be used in the sequel.

\section{Prime ideals of $L_{K}(E)$}

In this section, we give a complete characterization of the prime ideals of a
Leavitt path algebra $L_{K}(E)$ of an arbitrary graph $E$. In \cite{Aranda
Pino 9} graded prime ideals of $L_{K}(E)$ were described for row-finite graphs
$E$. We extend this result to the case when $E$ is an arbitrary graph.
Moreover, we also characterize the non-graded prime ideals of $L_{K}(E)$ for
arbitrary sized graphs $E$ by means of their generating sets. As will be clear
from subsequent sections, this helps in our deeper study of the prime ideals
of Leavitt path algebras. The bijective correspondence established in
\cite{Aranda Pino 9} for prime ideals of $L_{K}(E)$ for a row-finite graph $E$
is also extended to the case when $E$ is an arbitrary graph.

We shall be using the following description of a prime Leavitt path algebra
$L_{K}(E)$ that was established \ for row-finite graphs $E$ in \cite{Aranda
Pino 9} and for arbitrary graphs $E$ in \cite{abrams3}. I underestand that M.
Siles Molina also has independently obtained the following charaterization.

\begin{theorem}
\label{primeLPA}(\cite{abrams3}, \cite{Aranda Pino 9}) Let $E$ be an arbitrary
graph and $K$ be any\ field. Then the Leavitt path algebra $L_{K}(E)$ is a
prime ring if and only if $E^{0}$ satisfies the MT-3 condition.
\end{theorem}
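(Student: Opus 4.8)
The plan is to prove both directions of the equivalence by translating the algebraic primeness condition into a graphical statement about paths between vertices.

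**Sufficiency (MT-3 $\Rightarrow$ prime).** Since $L_{K}(E)$ is $\mathbb{Z}$-graded, by the observation from (\cite{NV 18}, Proposition II.1.4) it suffices to verify the primeness condition on graded ideals. First I would recall that every graded ideal is of the form $I_{(H,S)}$ for an admissible pair, and that $I_{(H,S)}\cap E^{0}=H$. The key reduction is that any nonzero graded ideal $I$ must meet $E^{0}$, since $I\cap E^{0}=H$ and a graded ideal with $H=\emptyset$ and $S=\emptyset$ is zero; thus a nonzero graded ideal contains at least one vertex. So suppose $A=I_{(H_1,S_1)}$ and $B=I_{(H_2,S_2)}$ are nonzero graded ideals with $AB=0$; pick vertices $u\in H_1$ and $v\in H_2$. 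By MT-3 applied to $E^{0}$, there is a vertex $w$ with $u\geq w$ and $v\geq w$. Since $H_1,H_2$ are hereditary, $u\geq w$ gives $w\in H_1\subseteq A$ and $v\geq w$ gives $w\in H_2\subseteq B$. Then $w=w\cdot w\in AB$, forcing $w\in AB=\{0\}$, a contradiction since vertices are nonzero idempotents. Hence no two nonzero graded ideals can have zero product, so $\{0\}$ is a graded-prime and therefore prime ideal, i.e. $L_{K}(E)$ is a prime ring.

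**Necessity ($L_{K}(E)$ prime $\Rightarrow$ MT-3).** I would argue by contraposition. Suppose $E^{0}$ fails MT-3: there exist vertices $u,v\in E^{0}$ with no common descendant, i.e. $T(u)\cap T(v)=\emptyset$ (no $w$ satisfies both $u\geq w$ and $v\geq w$). The natural candidates for annihilating ideals are the ideals generated by the hereditary saturated closures. Let $H_1$ be the smallest hereditary saturated set containing $T(u)$ and $H_2$ the smallest one containing $T(v)$; equivalently I consider the ideals $A=I(u)$ and $B=I(v)$ generated by $u$ and $v$ respectively. The crucial claim is that $AB=\{0\}$ while $A,B\neq\{0\}$, which contradicts primeness. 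The hard part is controlling the saturation step: even though the trees $T(u)$ and $T(v)$ are disjoint, forming the saturated closures could in principle force common vertices. I would need to verify that saturation cannot create an overlap—intuitively, a saturated-closure vertex added because all its edges land in $H_i$ still lies "upstream" of the original tree and cannot simultaneously be reached from both $u$ and $v$ given the MT-3 failure. Concretely, I expect $A\cap B$ to be the graded ideal generated by $H_1\cap H_2$ together with the relevant breaking-vertex elements, and $AB\subseteq A\cap B$; the goal is to show this intersection is zero.

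The main obstacle will be the necessity direction, specifically handling the interaction between products of generators $u\alpha\beta^{*}$ and $v\gamma\delta^{*}$ and the saturation/breaking-vertex structure. Rather than computing products term-by-term, I would prefer the cleaner route of showing directly that the disjointness of descendant sets yields $I(u)\,I(v)=\{0\}$ by analyzing a spanning element $u\alpha\beta^{*}\cdot v\gamma\delta^{*}$: such a product is nonzero only if $r(\beta)=s(v)=v$ and the ghost/real paths can be matched, which forces a vertex reachable from both $u$ and $v$ to exist, contradicting MT-3 failure. This path-matching computation, together with verifying that the two generated ideals are genuinely nonzero (which is immediate since $u\in I(u)$ and $v\in I(v)$ are nonzero idempotents), completes the contrapositive and hence the theorem.
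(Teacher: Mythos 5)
Your proposal is essentially correct, but note that the paper itself gives no proof of this theorem: it is imported from \cite{abrams3} and \cite{Aranda Pino 9}, so there is no internal argument to compare against; your write-up is a self-contained proof assembled from tools the paper records in its preliminaries. The sufficiency direction is complete and clean: it combines Tomforde's description of graded ideals as $I_{(H,S)}$ with $I_{(H,S)}\cap E^{0}=H$ (\cite{Tomforde 20}) and the reduction of primeness to graded ideals from (\cite{NV 18}, Proposition II.1.4), exactly the two facts quoted in Section 2. The one point you should make explicit is why $H=\emptyset$ forces $S=\emptyset$ (hence $I_{(H,S)}=\{0\}$): a breaking vertex of $H$ is an infinite emitter all but finitely many of whose edges have range in $H$, so $B_{\emptyset}=\emptyset$; granting that, every nonzero graded ideal contains a vertex, and your observation $w=w\cdot w\in AB$ (with $w\neq0$ a vertex) closes the case.

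For necessity, the middle paragraph of your write-up (saturated closures $H_{1},H_{2}$, the worry about overlap under saturation, the claim about $A\cap B$) should simply be discarded: you leave it with an admittedly unverified step, and it is superseded by your final ``cleaner route,'' which is the correct one. Two small repairs make that final argument airtight. First, elements of the form $u\alpha\beta^{\ast}$ span $uL_{K}(E)$, not the two-sided ideal generated by $u$; what your path-matching computation actually proves is $uL_{K}(E)v=0$, since $u\alpha\beta^{\ast}v\neq0$ forces $s(\alpha)=u$, $s(\beta)=v$ and $r(\alpha)=r(\beta)$, i.e.\ a common descendant of $u$ and $v$, which the failure of MT-3 forbids. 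Second, you must convert $uL_{K}(E)v=0$ with $u\neq0\neq v$ into failure of primeness: either invoke the element-wise criterion for not-necessarily-unital rings that the paper itself quotes from \cite{Lam 16} just before Lemma \ref{Lemma 3.8}, or note that the ideals $I(u)=LuL$ and $I(v)=LvL$ satisfy $I(u)I(v)\subseteq L(uL_{K}(E)v)L=\{0\}$ while $u\in I(u)$ and $v\in I(v)$ are nonzero. With these adjustments both directions are complete; indeed your graded reduction makes the sufficiency direction more economical than an analysis of arbitrary ideals would be, since it avoids entirely the case of ideals containing no vertices.
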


The following theorem from \cite{abrams4} is used in the sequel.

\begin{theorem}
\label{IdealThm}(\cite{abrams4}) Let $E$ be an arbitrary graph and $K$ be any
field. Then any non-zero ideal of the $L_{K}(E)$ is generated by elements of
the form
\[
(u+%
%TCIMACRO{\tsum \limits_{i=1}^{k}}%
%BeginExpansion
{\textstyle\sum\limits_{i=1}^{k}}
%EndExpansion
k_{i}g^{r_{i}})(u-%
%TCIMACRO{\tsum \limits_{e\in X}}%
%BeginExpansion
{\textstyle\sum\limits_{e\in X}}
%EndExpansion
ee^{\ast})
\]
where $u\in E^{0}$, $k_{i}\in K$, $r_{i}$ are\ positive integers, $X$ is a
finite (possibly empty) proper subset of $s^{-1}(u)$ and, whenever $k_{i}%
\neq0$ for some $i$, then $g$ is a unique cycle based at $u$.
\end{theorem}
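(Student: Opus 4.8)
The plan is to show that the ideal $J$ generated by all elements of the prescribed form that happen to lie in $I$ coincides with $I$ itself. Since each such generator already belongs to $I$, the inclusion $J\subseteq I$ is automatic, so the whole content is the reverse inclusion $I\subseteq J$, and for this it suffices to prove that every nonzero $a\in I$ lies in $J$. The engine driving the argument is a localization (a ``Reduction Theorem'' of the kind standard in the Leavitt path algebra literature): writing $a$ in the normal form $a=\sum_{i=1}^{n}k_{i}\alpha_{i}\beta_{i}^{\ast}$ guaranteed in the preliminaries, one can find paths $p,q$ so that $0\neq p^{\ast}aq\in uL_{K}(E)u$ for a single vertex $u$. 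Because $I$ is a two-sided ideal, this reduced element still lies in $I$, so it is enough to analyze $I\cap uL_{K}(E)u$ and then transport the resulting generators back to $a$.

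First I would pin down the structure of the corner $uL_{K}(E)u$, isolating the two phenomena that produce the two factors. The projection factor arises from the idempotent structure at $u$: within $I$ one extracts an idempotent $u-\sum_{e\in X}ee^{\ast}$ with $X$ a finite subset of $s^{-1}(u)$, the finiteness coming from the fact that only the finitely many edges appearing in the normal form of $a$ are involved, and the properness $X\subsetneq s^{-1}(u)$ being exactly what keeps the idempotent nonzero — for a regular $u$, taking $X=s^{-1}(u)$ would collapse it to $0$ via the CK-2 relation. The polynomial factor arises when a cycle $g$ based at $u$ survives the reduction, contributing a nonzero expression $u+\sum_{i}k_{i}g^{r_{i}}$. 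When $g$ has no exit, the isomorphism $uL_{K}(E)u\cong K[x,x^{-1}]$ recorded in the preliminaries lets me normalize this, after multiplying by a power of $g$ (a unit of the corner, which leaves the generated ideal unchanged), to the stated shape with $r_{i}>0$; then $s^{-1}(u)=\{e_{1}\}$ is a single edge, forcing $X=\emptyset$ and the projection factor to be $u$. When $g$ has an exit, the exit edge supplies a nontrivial projection and the two factors genuinely combine into $(u+\sum_{i}k_{i}g^{r_{i}})(u-\sum_{e\in X}ee^{\ast})$. The uniqueness of $g$ whenever the polynomial part is nonzero is forced because such a polynomial can survive the reduction only if $g$ is the sole cycle based at $u$: a second cycle at $u$ would furnish exits letting the reduction simplify $p^{\ast}aq$ further and annihilate the polynomial part.

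The passage back to $a$ I would organize as an induction on the complexity of the normal form (say the number of terms together with the maximal path length). Given $0\neq a\in I$, the reduced element $b=p^{\ast}aq$ is one of the special generators and lies in $J$; multiplying $b$ by suitable paths and scalars yields $a_{0}\in\langle b\rangle\subseteq J$ agreeing with $a$ in its leading term, so that $a-a_{0}\in I$ has strictly smaller complexity, whence $a-a_{0}\in J$ by induction and $a\in J$, the base case being an element already of special form. I expect the principal obstacle to be the local structure analysis of $uL_{K}(E)u$ producing the exact product form: specifically (i) extracting the projection idempotent $u-\sum_{e\in X}ee^{\ast}$ when $u$ is an infinite emitter, where CK-2 is unavailable and the truncation to a finite proper subset $X$ must be justified directly, and (ii) proving the uniqueness of $g$ when the polynomial part is nonzero. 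A secondary technical point is ensuring that the leading-term cancellation in the inductive step genuinely lowers complexity in every case, and that the Reduction Theorem is invoked in a form valid for arbitrary, not merely row-finite, graphs $E$.
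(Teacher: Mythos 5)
You should know at the outset that the paper contains no proof of Theorem \ref{IdealThm} to compare against: it is quoted from \cite{abrams4} and used as a black box (the paper only re-proves its specialization to ideals containing no vertices, in Lemma \ref{Lermma3.3}). So I can only judge your outline on its own terms, and on those terms it has a genuine gap at the step that carries all the weight, namely the ``transport back'' induction. You propose to reduce $0\neq a\in I$ to $b=p^{\ast}aq$ in a corner, note $b\in J$, and then find $a_{0}\in\langle b\rangle\subseteq J$ ``agreeing with $a$ in its leading term'' so that $a-a_{0}$ has strictly smaller complexity. No construction of $a_{0}$ is given, and the natural candidate $a_{0}=\lambda\,pbq^{\ast}=\lambda\,pp^{\ast}aqq^{\ast}$ fails: subtracting it can strictly \emph{increase} your complexity measure. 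Concretely, take $a=v+c+c^{2}$ where $c$ is a cycle based at $v$ with an exit $e$ at $v$; the Reduction Theorem gives $b=e^{\ast}ae=r(e)$, so $a_{0}=e\,b\,e^{\ast}=ee^{\ast}$, and $a-a_{0}=v-ee^{\ast}+c+c^{2}$ has more terms than $a$ (and is not of the prescribed form, since $(v+c+c^{2})(v-ee^{\ast})\neq v+c+c^{2}-ee^{\ast}$). There is no termination argument, and this is not a technicality: it is the entire content of the theorem. A second, related error is that your corner analysis misstates what reduction can output. For arbitrary graphs the Reduction Theorem yields either a nonzero scalar multiple of a vertex or a polynomial in a cycle \emph{without exits} in $E$; if $g$ has an exit, reduction pushes past it to a vertex. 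So the generators with nonzero polynomial part, a cycle $g$ possessing exits, and a nontrivial projection factor $(u-\sum_{e\in X}ee^{\ast})$ are never outputs of reduction, and your mechanism has no way to produce them; your sentence ``the exit edge supplies a nontrivial projection and the two factors genuinely combine'' describes something the reduction process does not do.

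What actually makes the theorem work --- both in \cite{abrams4} and in the way the present paper exploits it in Lemmas \ref{Lermma3.3}--\ref{Lemma 3.6} --- is the graded/quotient-graph machinery that your outline never invokes. Set $H=I\cap E^{0}$ and $S=\{v\in B_{H}:v^{H}\in I\}$. The generators of the graded ideal $I_{(H,S)}\subseteq I$, namely $H\cup\{v^{H}:v\in S\}$, are already of the prescribed form, and the finiteness and properness of $X\subset s^{-1}(u)$ at an infinite emitter come from the breaking-vertex condition, not from the normal form of any particular element --- this is the correct resolution of your worry (i), which cannot be handled ``directly'' as you hope. Then $L_{K}(E)/I_{(H,S)}\cong L_{K}(E\backslash(H,S))$ by \cite{Tomforde 20}, the image of $I$ meets no vertices, and an ideal meeting no vertices is generated by polynomials $f(c)$ in cycles without exits (the no-vertex case, i.e.\ Lemma \ref{Lermma3.3}). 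Pulling those $f(c)$ back along the quotient map is precisely what creates the product shape: the factor $(u-\sum_{e\in X}ee^{\ast})$ records the edges at $u$ that die in the quotient, and the uniqueness of $g$ follows because $g$ must become an exit-free cycle in the quotient graph. Without this decomposition, or a genuine substitute for it, your inductive step is assuming essentially what is to be proved.
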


\bigskip Before proving our main theorem, we shall establish a series of
useful Lemmas. The next Lemma specializes Theorem \ref{IdealThm} to the case
of ideals containing no vertices.

\begin{lemma}
\label{Lermma3.3}Suppose $E$ is an arbitrary graph and $K$ is any field. If
$N$ is a non-zero ideal of $L_{K}(E)$ which does not contain any vertices of
$E$, then $N$ is generated by elements of the form $y=(u+%
%TCIMACRO{\tsum \limits_{i=1}^{n}}%
%BeginExpansion
{\textstyle\sum\limits_{i=1}^{n}}
%EndExpansion
k_{i}g^{r_{i}})$ where $g$ is a cycle without exits based at a vertex $u$.
\end{lemma}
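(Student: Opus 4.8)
The plan is to specialize Theorem \ref{IdealThm} and then prune each generator. Write a typical generator of $N$ as $y = a\,b$ with $a = u + \sum_{i=1}^{n}k_i g^{r_i}$ and $b = u - \sum_{e\in X}ee^{\ast}$, where $X$ is a finite proper subset of $s^{-1}(u)$; both factors lie in the corner $uL_{K}(E)u$ and $b$ is an idempotent. The guiding principle is that, since $N$ contains no vertex, any element of $E^{0}$ that can be produced from $y$ by multiplying on the left and right by edges and ghost edges must lead to a contradiction. The CK-$1$ relation $e^{\ast}f = \delta_{e,f}\,r(e)$ together with $s(e)e=e$ will be the main computational engine, and the goal for each generator is to show that (i) the cycle polynomial is genuinely present, (ii) the second factor is trivial, and (iii) $g$ has no exit.

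First I would show that some $k_i\neq 0$. If all $k_i=0$ then $y=b=u-\sum_{e\in X}ee^{\ast}$; when $X=\emptyset$ this is the vertex $u\in N$, and when $X\neq\emptyset$ one picks $e_{0}\in s^{-1}(u)\setminus X$ and checks $e_{0}^{\ast}y\,e_{0}=r(e_{0})\in N$, each contradicting that $N$ avoids vertices. Hence $g=f_{1}\cdots f_{m}$ is a cycle based at $u$. Next I would eliminate the factor $b$ in two moves. Writing $f_{1}$ for the initial edge of $g$: if $f_{1}\in X$ then, as $X$ is proper, there is $e_{0}\in s^{-1}(u)\setminus X$ with $e_{0}\neq f_{1}$, and since $be_{0}=e_{0}$ one finds $e_{0}^{\ast}y\,e_{0}=r(e_{0})\in N$, again a contradiction; so $f_{1}\notin X$. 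Granting $f_{1}\notin X$ gives $bg=g$, whence $g^{\ast}y\,g = g^{\ast}a\,g = a$, so that $a=u+\sum_{i}k_i g^{r_i}$ itself lies in $N$.

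The main obstacle is to prove that $g$ has no exit, and this is where the hypothesis on $N$ does its real work. Assuming an exit $h$, so $s(h)=s(f_{j})$ and $h\neq f_{j}$ for some $j$, I would set $\mu=f_{1}\cdots f_{j-1}$ (with $\mu=u$ if $j=1$) and compute $h^{\ast}\mu^{\ast}a\,\mu h$. The contribution of the $u$-term collapses, via $\mu^{\ast}\mu=s(f_{j})$ and $h^{\ast}h=r(h)$, to the vertex $r(h)$, while every cyclic term $k_i\,h^{\ast}\mu^{\ast}g^{r_i}\mu h$ vanishes because $\mu^{\ast}g^{r_i}$ begins with $f_{j}$ and $h^{\ast}f_{j}=0$ by CK-$1$. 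Thus $r(h)\in N$, a contradiction, so $g$ has no exit. In particular no edge other than $f_{1}$ leaves $u$, whence $s^{-1}(u)=\{f_{1}\}$ and its only proper subset is $X=\emptyset$; therefore $y=a=u+\sum_{i}k_i g^{r_i}$ with $g$ a cycle without exits based at $u$, as asserted. The delicate points are the reduction that removes the idempotent factor $b$ (together with the separate treatment of the case $f_{1}\in X$) and the exit-to-vertex extraction of the last step.
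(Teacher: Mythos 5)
Your proof is correct and takes essentially the same route as the paper's: specialize Theorem \ref{IdealThm}, then use CK-1 conjugations (multiplying by suitable edges and ghost edges) to force vertices into $N$, deriving contradictions that show the cycle has no exits and $X=\emptyset$. The only differences are cosmetic refinements---you conjugate by $g$ in a single step where the paper conjugates by the initial edge $f$ and then by $\alpha$, and you explicitly dispatch the degenerate cases (all $k_i=0$, and $f_1\in X$) that the paper's argument handles only implicitly.
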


\begin{proof}
From Theorem \ref{IdealThm}, we know that $N$ is generated by elements of the
form $y=(u+%
%TCIMACRO{\tsum \limits_{i=1}^{n}}%
%BeginExpansion
{\textstyle\sum\limits_{i=1}^{n}}
%EndExpansion
k_{i}g^{r_{i}})(u-%
%TCIMACRO{\tsum _{e\in X}}%
%BeginExpansion
{\textstyle\sum_{e\in X}}
%EndExpansion
ee^{\ast})\neq0$ where $g$ is a cycle in $E$ based at a vertex $u$ and where
$X$ is a finite proper subset of $s^{-1}(u)$.

Our first step is to show that the cycle $g$ has no exits and that $X$ must be
an empty set. Suppose $f\in s^{-1}(u)\backslash X$ with $r(f)=w$. If $f$ is
not the initial edge of $g$, then $f^{\ast}g=0$ and $f^{\ast}yf=f^{\ast}(u+%
%TCIMACRO{\tsum \limits_{i=1}^{n}}%
%BeginExpansion
{\textstyle\sum\limits_{i=1}^{n}}
%EndExpansion
k_{i}g^{r_{i}})f=f^{\ast}uf=r(f)=w\in N$, contradicting the fact that $N$
contains no vertices. If $f$ is the initial edge of $g$ say $g=f\alpha$, then
$f^{\ast}yf=w+%
%TCIMACRO{\tsum \limits_{i=1}^{n}}%
%BeginExpansion
{\textstyle\sum\limits_{i=1}^{n}}
%EndExpansion
k_{i}h^{r_{i}}\in N$ where $w=r(f)$ and $h$ is the cycle $\alpha f$. Then
$\alpha^{\ast}(w+%
%TCIMACRO{\tsum \limits_{i=1}^{n}}%
%BeginExpansion
{\textstyle\sum\limits_{i=1}^{n}}
%EndExpansion
k_{i}h^{r_{i}})\alpha=u+%
%TCIMACRO{\tsum \limits_{i=1}^{n}}%
%BeginExpansion
{\textstyle\sum\limits_{i=1}^{n}}
%EndExpansion
k_{i}g^{r_{i}}\in N$. If there is an exit $e$ at a vertex $u^{\prime}$ on $g$
and if $\beta$ is the part of $g$ connecting $u$ to $u^{\prime}$ (where we
take $\beta=u$ if $u^{\prime}=u$) and $\gamma$ is the part of $g$ from
$u^{\prime}$ to $u$ ( so that $g=\beta\gamma$), then, denoting $\gamma\beta$
by $d$, we get $e^{\ast}\beta^{\ast}(u+%
%TCIMACRO{\tsum \limits_{i=1}^{n}}%
%BeginExpansion
{\textstyle\sum\limits_{i=1}^{n}}
%EndExpansion
k_{i}g^{r_{i}})\beta e=e^{\ast}(u^{\prime}+%
%TCIMACRO{\tsum \limits_{i=1}^{n}}%
%BeginExpansion
{\textstyle\sum\limits_{i=1}^{n}}
%EndExpansion
k_{i}d^{r_{i}})e=r(e)\in N$, a contradiction. Thus the cycle $g$ has no exits.
In particular, $|s^{-1}(u)|=1$ and this implies, as $y\neq0$, that $X$ is an
empty set.Thus the generators of $N$ are of the form $y=(u+%
%TCIMACRO{\tsum \limits_{i=1}^{n}}%
%BeginExpansion
{\textstyle\sum\limits_{i=1}^{n}}
%EndExpansion
k_{i}g^{r_{i}})$.
\end{proof}

\begin{lemma}
\label{Lemma 3.4}Let $E$ be an arbitrary graph and $K$ be any field. Suppose
$E^{0}$satisfies the MT-3 condition. If $N$ is a non-zero ideal of $L_{K}(E)$
which does not contain any vertices of $E$, then there is a unique cycle $c$
without exits in $E$ and $N$ is a principal ideal generated by $p(c)$, where
$p(x)$ is a polynomial belonging to $K[x]$.
\end{lemma}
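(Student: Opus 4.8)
The plan is to combine Lemma \ref{Lermma3.3} with the MT-3 hypothesis to first pin down a single cycle without exits, and then show that all the generators produced by that Lemma can be consolidated into a single polynomial generator. By Lemma \ref{Lermma3.3}, the ideal $N$ is generated by elements of the form $y = u + \sum_{i=1}^{n} k_i g^{r_i}$, where $g$ is a cycle without exits based at a vertex $u$. The first task is to show that across all such generators there is essentially only \emph{one} such cycle (up to the choice of base vertex along it). Here the MT-3 condition does the work: if there were two distinct cycles $c$ and $d$ without exits, based at vertices $u$ and $v$ respectively, then since neither has an exit, $T(u)$ consists only of the vertices of $c$ and $T(v)$ only of those of $d$; MT-3 would force a common vertex $w$ with $u \geq w$ and $v \geq w$, which lands $w$ on both cycles, so $c$ and $d$ share a vertex, and a cycle without exits through that vertex is unique. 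Hence all generators involve (powers of) a single cycle $c$.

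Once a unique cycle $c$ without exits is identified, say based at $u$, I would use the isomorphism recalled in the Preliminaries, namely $u L_K(E) u \cong K[x, x^{-1}]$ sending $u \mapsto 1$, $c \mapsto x$, $c^* \mapsto x^{-1}$. Under this isomorphism each generator $y = u + \sum k_i g^{r_i}$ corresponds to a Laurent polynomial, and after multiplying by a suitable power of $c^*$ (i.e.\ of $x^{-1}$) I can clear the negative powers and regard each $y$ as corresponding to an honest polynomial in $K[x]$. The corner $u L_K(E) u$ being a principal ideal domain is the crux: the ideal of $K[x,x^{-1}] \cong K[x,x^{-1}]$ generated by the images of the $y$'s is principal, generated by some $p(x) \in K[x]$ (we may take $p$ to have nonzero constant term, or normalize so $p(x) \in K[x]$ after clearing). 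I would then transport $p(x)$ back to the element $p(c) \in u L_K(E) u \subset L_K(E)$.

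The final and most delicate step is to verify that the \emph{two-sided} ideal of $L_K(E)$ generated by $p(c)$ coincides with $N$, not merely that the corner ideals agree. The inclusion $\langle p(c) \rangle \subseteq N$ is immediate since $p(c)$ lies in $N$. For the reverse inclusion, I would show each generator $y$ lies in $\langle p(c) \rangle$: since $c$ has no exits, one checks that for any paths $\alpha, \beta$ the element $\alpha\, p(c)\, \beta^*$ either vanishes or, after peeling off the parts of $\alpha$ and $\beta$ lying outside the cycle, reduces to something of the form (path)$\cdot p(c) \cdot$(ghost path) supported on the cycle. The key point, which I expect to be the main obstacle, is controlling how two-sided multiplication interacts with the no-exit structure: because $c$ has no exits, multiplication on the left and right by arbitrary elements of $L_K(E)$ can only move mass around the single cycle (or annihilate it), so the two-sided ideal generated by $p(c)$ meets the corner $u L_K(E) u$ in exactly the principal ideal $(p(x))$, and contains all the original generators $y$. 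Establishing this no-exit ``confinement'' carefully is where the real argument lies; the reduction to $K[x,x^{-1}]$ and the identification of $p$ are then routine.
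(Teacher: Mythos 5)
Your overall route coincides with the paper's: invoke Lemma \ref{Lermma3.3}, use MT-3 to see there is only one cycle without exits (your $T(u)\cap T(v)$ argument is exactly the right justification for the paper's one-sentence assertion), then exploit the corner isomorphism with $K[x,x^{-1}]$ to produce a single polynomial generator. One omission to note before the main point: the generators coming out of Lemma \ref{Lermma3.3} are based at \emph{various} vertices along the cycle, so before you can speak of "their images in $uL_{K}(E)u$" you must move them all into one corner. The paper does this explicitly by conjugating along the cycle: writing $c=\alpha\beta$ and $g=\beta\alpha$, one checks $\beta^{\ast}(u+\sum k_{i}g^{r_{i}})\beta=v+\sum k_{i}c^{r_{i}}\in N$ and conversely, so $N$ has a generating set of elements $f(c)$ all based at the single vertex $v$. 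Your writeup silently assumes every generator already shares the base vertex $u$; this is easily repaired, but it is a step, not a triviality.

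The substantive problem is that the step you flag as "the most delicate" --- and leave unexecuted --- is in fact immediate, and the route you propose through it (analyzing $\alpha\,p(c)\,\beta^{\ast}$ for arbitrary paths and proving a no-exit "confinement" property, namely that $\langle p(c)\rangle$ meets the corner in exactly $(p(x))$) is an unnecessary detour that your proposal never actually carries out. To get $N\subseteq\langle p(c)\rangle$ you need nothing about how two-sided multiplication moves elements around the cycle: by your own setup, each generator's image in $K[x,x^{-1}]$ lies in the principal ideal $(p(x))$, i.e. equals $q(x)p(x)$, so the generator itself equals $q(c)p(c)$ with $q(c)\in uL_{K}(E)u\subset L_{K}(E)$ --- visibly an element of the two-sided ideal generated by $p(c)$. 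That one line closes the argument. The paper's version of the same point is the division algorithm in $K[x]$: take $p(x)$ of smallest positive degree with $p(c)\in N$; for a generator $f(c)$ write $f=qp+r$, so $r(c)=f(c)-q(c)p(c)\in N$; a nonzero constant remainder would put a vertex in $N$ (excluded by hypothesis) and a positive-degree remainder would contradict minimality, hence $r=0$ and $f(c)=q(c)p(c)$. So your plan is sound once the phantom obstacle is removed, but as written the decisive inclusion is asserted to depend on an argument you neither need nor supply.
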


\begin{proof}
By Lemma \ref{Lermma3.3}, $N$ is generated by elements of the form $y=(u+%
%TCIMACRO{\tsum \limits_{i=1}^{n}}%
%BeginExpansion
{\textstyle\sum\limits_{i=1}^{n}}
%EndExpansion
k_{i}g^{r_{i}})$ where $g$ is a cycle without exits based at $u$. Since the
ideal $N$ does not contain any vertices, it is well known (see, for eg.,
Proposition 18 of \cite{abrams4} or Proposition 2.8 (ii) of \cite{Aranda Pino
9}) that $E$ does not satisfy the Condition (L). So there is a cycle $c$
without exits and based at a vertex $v$ in $E$. Now the MT-3 condition on
$E^{0}$ implies that $c$ is the only cycle without exits in $E$ (except
possibly a permutation of its vertices). This means that the cycle $g$ based
at $u$ is the same as the cycle $c$ based at $v$, obtained possibly by a
rotation of the vertices on $c$. If $\alpha$ is the part of $c$ from $v$ to
$u$ and $\beta$ is the part of $c$ from $u$ to $v$ (so that $c=\alpha\beta$
and $g=\beta\alpha$) , then $\beta^{\ast}(u+%
%TCIMACRO{\tsum \limits_{i=1}^{n}}%
%BeginExpansion
{\textstyle\sum\limits_{i=1}^{n}}
%EndExpansion
k_{i}g^{r_{i}})\beta=v+%
%TCIMACRO{\tsum \limits_{i=1}^{n}}%
%BeginExpansion
{\textstyle\sum\limits_{i=1}^{n}}
%EndExpansion
k_{i}c^{r_{i}}\in N$ and $\alpha^{\ast}(v+%
%TCIMACRO{\tsum \limits_{i=1}^{n}}%
%BeginExpansion
{\textstyle\sum\limits_{i=1}^{n}}
%EndExpansion
k_{i}c^{r_{i}})\alpha=$ $u+%
%TCIMACRO{\tsum \limits_{i=1}^{n}}%
%BeginExpansion
{\textstyle\sum\limits_{i=1}^{n}}
%EndExpansion
k_{i}g^{r_{i}}=y\in N$. From this it is clear that we can select \ a
generating set for the ideal $N$ consisting of elements of the form \ $(v+%
%TCIMACRO{\tsum \limits_{i=1}^{n}}%
%BeginExpansion
{\textstyle\sum\limits_{i=1}^{n}}
%EndExpansion
k_{i}c^{r_{i}})$ with this fixed cycle $c$ based at $v$, where $k_{i}\neq0$
for at least one $i$. We shall denote the element $(v+%
%TCIMACRO{\tsum \limits_{i=1}^{n}}%
%BeginExpansion
{\textstyle\sum\limits_{i=1}^{n}}
%EndExpansion
k_{i}c^{r_{i}})$ by $f(c)$, where $f(x)=1+%
%TCIMACRO{\tsum \limits_{i=1}^{n}}%
%BeginExpansion
{\textstyle\sum\limits_{i=1}^{n}}
%EndExpansion
k_{i}x^{r_{i}}\in K[x]$ is a polynomial of positive degree (and where we use
the convention that $c^{0}=v$). Let $p(x)$ be a polynomial of the
smallest\ positive degree in $K[x]$ such that $p(c)\in N$. By the division
algorithm in $K[x]$, every generator $f(c)$ of $N$ is easily shown to be a
multiple of $p(c)$. This proves that $N$ is the principal ideal generated by
$p(c)$.
\end{proof}

\begin{lemma}
\label{Lemma 3.5}Let $E$ be an arbitrary graph and $K$ be any field. Let $P$
be an ideal of $L_{K}(E)$ with $H=P\cap E^{0}$. Let $S=\{v\in B_{H}:v^{H}\in
P\}$. Then\ the graded ideal $I_{(H,S)}\subset P$ contains every other graded
ideal of $L_{K}(E)$ inside $P$.
\end{lemma}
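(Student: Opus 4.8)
The plan is to prove two things: first that $I_{(H,S)}\subseteq P$, and second that any graded ideal $J$ of $L_{K}(E)$ with $J\subseteq P$ satisfies $J\subseteq I_{(H,S)}$. First I would check that $(H,S)$ is an admissible pair: since $H=P\cap E^{0}$, the remark recalled earlier gives that $H$ is hereditary and saturated, and $S\subseteq B_{H}$ holds by the very definition of $S$, so $I_{(H,S)}$ is defined. The containment $I_{(H,S)}\subseteq P$ is then immediate from the generators: $I_{(H,S)}$ is generated by $H\cup\{v^{H}:v\in S\}$, every $v\in H=P\cap E^{0}$ lies in $P$, and every $v^{H}$ with $v\in S$ lies in $P$ by the definition of $S$. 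Since $P$ is an ideal, the ideal these elements generate is contained in $P$.

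For the maximality assertion, let $J$ be a graded ideal with $J\subseteq P$. By Tomforde's description recalled above, $J=I_{(H^{\prime},S^{\prime})}$ where $H^{\prime}=J\cap E^{0}$ and $S^{\prime}=J\cap B_{H^{\prime}}=\{w\in B_{H^{\prime}}:w^{H^{\prime}}\in J\}$. From $J\subseteq P$ we get $H^{\prime}=J\cap E^{0}\subseteq P\cap E^{0}=H$, so all the vertex generators of $J$ already lie in $H\subseteq I_{(H,S)}$. It remains to show that each generator $w^{H^{\prime}}$ with $w\in S^{\prime}$ lies in $I_{(H,S)}$; this is the crux.

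To handle $w^{H^{\prime}}=w-\sum_{s(e)=w,\,r(e)\notin H^{\prime}}ee^{\ast}$, I would split the finite sum according to whether $r(e)\in H$. Since $H^{\prime}\subseteq H$, writing
\[
w^{H^{\prime}}=\Big(w-\sum_{s(e)=w,\,r(e)\notin H}ee^{\ast}\Big)-\sum_{s(e)=w,\,r(e)\in H\setminus H^{\prime}}ee^{\ast},
\]
the last sum lies in $I_{(H,S)}$ because each term satisfies $ee^{\ast}=e\,r(e)\,e^{\ast}$ with $r(e)\in H\subseteq I_{(H,S)}$. So it suffices to treat the bracketed term $t:=w-\sum_{s(e)=w,\,r(e)\notin H}ee^{\ast}$. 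If $w\in H$, then hereditariness forces $r(e)\in H$ for every edge emitted by $w$, the sum is empty, and $t=w\in H\subseteq I_{(H,S)}$. If $w\notin H$, I first argue the sum defining $t$ is nonempty: were it empty, every edge from $w$ would land in $H$, so $w=w^{H^{\prime}}+\sum_{s(e)=w,\,r(e)\in H\setminus H^{\prime}}ee^{\ast}\in P$, forcing $w\in P\cap E^{0}=H$, a contradiction. Hence $w$ is an infinite emitter (as $w\in B_{H^{\prime}}$) with $1\le|r(s^{-1}(w))\cap(E^{0}\setminus H)|<\infty$, the finiteness coming from $H^{\prime}\subseteq H$ together with $w\in B_{H^{\prime}}$; thus $w\in B_{H}$ and $t=w^{H}$.

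Finally, since $w^{H^{\prime}}\in J\subseteq P$ and the difference $w^{H^{\prime}}-w^{H}$ equals the last displayed sum, which lies in $I_{(H,S)}\subseteq P$, I conclude $w^{H}=w^{H^{\prime}}-(w^{H^{\prime}}-w^{H})\in P$, whence $w\in S$ by the definition of $S$, and therefore $w^{H}\in I_{(H,S)}$. Combining this with $w^{H^{\prime}}-w^{H}\in I_{(H,S)}$ yields $w^{H^{\prime}}\in I_{(H,S)}$, so every generator of $J$ lies in $I_{(H,S)}$ and hence $J\subseteq I_{(H,S)}$. I expect the main obstacle to be the bookkeeping around breaking vertices: correctly relating $w^{H^{\prime}}$ to $w^{H}$, verifying that $w\in B_{H}$ whenever $w\notin H$, and checking that all sums involved are finite so that these manipulations are legitimate.
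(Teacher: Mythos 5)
Your proposal is correct and follows essentially the same route as the paper's proof: split the finite sum defining $w^{H^{\prime}}$ according to whether $r(e)\in H$, note that the terms with $r(e)\in H$ lie in $I_{(H,S)}$, and use membership in $P$ to force either $w\in H$ or $w\in B_{H}$ with $w^{H}\in P$, i.e.\ $w\in S$. If anything, you are slightly more careful than the paper, which in its breaking-vertex case writes $v^{H_{1}}=v^{H}-\sum e_{i}e_{i}^{\ast}\in I_{(H,S)}$ without spelling out the step you make explicit, namely that $w^{H}=w^{H^{\prime}}+(w^{H}-w^{H^{\prime}})\in P$ is what puts $w$ in $S$ and hence $w^{H}$ in $I_{(H,S)}$.
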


\begin{proof}
Suppose $A=I_{(H_{1},S_{1})}\subset P$. We claim that $A\subset I_{(H,S)}$.
Clearly $H_{1}\subset E^{0}\cap P=H$. Note that if $H_{1}$ is empty,
then\ clearly $A=I_{(H_{1},S_{1})}=\{0\}\subset I_{(H,S)}$. So we assume that
$H_{1}$ is non-empty. We need to show that if $v\in S_{1}$ then $v^{H_{1}}\in
I_{(H,S)}$. Thus $v$ is a breaking vertex for $H_{1}$ and let $e_{1},..,e_{n}$
be the finitely many edges satisfying $s(e_{i})=v$ and $r(e_{i})\notin H_{1}$.
Suppose $\ v$ is a breaking vertex for $H$. By re-indexing, we may then assume
that for some $m\leq n$, $r(e_{i})\notin H$ for $i=1,...m_{\text{ },}$and that
$r(e_{j})\in H$ for $j=m+1,...,n$. Since $e_{j}\in I_{(H,S)}$ for
$j=m+1,...,n$, we get $v^{H_{1}}=v^{H}-%
%TCIMACRO{\tsum \limits_{i=m+1}^{n}}%
%BeginExpansion
{\textstyle\sum\limits_{i=m+1}^{n}}
%EndExpansion
e_{i}e_{i}^{\ast}\in I_{(H,S)}$, as desired. Suppose $v$ is not a breaking
vertex for $H$. Since $v$ is a breaking vertex for $H_{1}$ but not for $H$, we
have $r(s^{-1}(v))\subset H$. So $e_{i}=e_{i}r(e_{i})\in I_{(H,S)}$ for
$i=1,...,n$. As $%
%TCIMACRO{\tsum \limits_{i=1}^{n}}%
%BeginExpansion
{\textstyle\sum\limits_{i=1}^{n}}
%EndExpansion
e_{i}e_{i}^{\ast}\in I_{(H,S)}\subset P$ and $v-%
%TCIMACRO{\tsum \limits_{i=1}^{n}}%
%BeginExpansion
{\textstyle\sum\limits_{i=1}^{n}}
%EndExpansion
e_{i}e_{i}\in P$, we then conclude that $v\in P\cap E^{0}=H$. Clearly then
$v^{H_{1}}\in I_{(H,S)}$ and so $A\subset I_{(H,S)}$.
\end{proof}

\begin{lemma}
\label{Lemma 3.6}Let $P$ be a prime ideal of $L_{K}(E)$ with $H=P\cap E^{0}$
and let $S=\{v\in B_{H}:v^{H}\in P\}$. Then the ideal $I_{(H,S)}$ is also a
prime ideal of $L_{K}(E)$.
\end{lemma}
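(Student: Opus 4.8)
The plan is to exploit the fact that $I_{(H,S)}$ is a \emph{graded} ideal, so that its primeness can be tested against graded ideals alone. Since $(H,S)$ is an admissible pair---$H=P\cap E^{0}$ is hereditary saturated, and $S\subseteq B_{H}$ directly by its definition---the ideal $I_{(H,S)}$ is one of the graded ideals classified in \cite{Tomforde 20}. By the graded-prime criterion recalled at the end of Section 2 (\cite{NV 18}, Proposition II.1.4), it therefore suffices to show that whenever $A$ and $B$ are \emph{graded} ideals of $L_{K}(E)$ with $AB\subseteq I_{(H,S)}$, at least one of $A\subseteq I_{(H,S)}$ or $B\subseteq I_{(H,S)}$ holds.

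So, first I would fix graded ideals $A,B$ with $AB\subseteq I_{(H,S)}$. By Lemma \ref{Lemma 3.5} we already know $I_{(H,S)}\subseteq P$, and hence $AB\subseteq P$. Since $P$ is prime, either $A\subseteq P$ or $B\subseteq P$; say $A\subseteq P$. Now $A$ is a graded ideal lying inside $P$, so the remaining assertion of Lemma \ref{Lemma 3.5}---that $I_{(H,S)}$ contains every graded ideal of $L_{K}(E)$ contained in $P$---forces $A\subseteq I_{(H,S)}$. This verifies the graded-prime condition and completes the argument.

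The proof is therefore short, and its real content is entirely front-loaded into the two tools being combined: the reduction of graded-ideal primeness to graded test ideals, and Lemma \ref{Lemma 3.5}, which identifies $I_{(H,S)}$ as the \emph{largest} graded ideal sitting inside $P$. I do not anticipate a genuine obstacle here; the only point requiring care is the legitimacy of the reduction to graded test ideals, namely that $I_{(H,S)}$ is itself graded, which is immediate from the admissible-pair description. One should also record at the outset that $S$ is well-defined and $(H,S)$ is admissible, so that $I_{(H,S)}$ makes sense; both follow from the standing facts that $P\cap E^{0}$ is hereditary saturated and that $S$ was taken to be a subset of $B_{H}$.
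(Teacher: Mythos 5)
Your proof is correct and follows essentially the same route as the paper: both reduce primeness of the graded ideal $I_{(H,S)}$ to the graded-ideal test of (\cite{NV 18}, Proposition II.1.4), then use the primeness of $P$ together with Lemma \ref{Lemma 3.5} (that $I_{(H,S)}$ is the largest graded ideal inside $P$) to absorb the test ideal into $I_{(H,S)}$. Your version is slightly more careful in making explicit the step $AB\subseteq I_{(H,S)}\subseteq P$ and the admissibility of the pair $(H,S)$, which the paper leaves implicit.
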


\begin{proof}
Suppose $A=I_{(H_{1},S_{1})}$ and $B=I_{(H_{2},S_{2}})$ are two graded ideals
of $L_{K}(E)$ with $AB\subset I_{(H,S)}$. Since $P$ is prime, one of them, say
$A$, is contained in $P$. \ By Lemma \ref{Lemma 3.5}, $A\subset I_{(H,S)}$.
Thus the ideal $I_{(H,S)}$ is a graded prime. Since $L_{K}(E)$ is graded by
the group $%
%TCIMACRO{\U{2124} }%
%BeginExpansion
\mathbb{Z}
%EndExpansion
$, as noted in the Preliminaries section, we can appeal to (\cite{NV 18},
Proposition II.1.4) to conclude that $I_{(H,S)}$ is actually a prime ideal of
$L_{K}(E)$.
\end{proof}

The following consequence of Lemma \ref{Lemma 3.6} may be of some interest.

\begin{corollary}
\label{Cor 3.7}Let $E$ be an arbitrary graph and $K$ be any field. Then the
Leavitt path algebra \ $L_{K}(E)$ is a prime ring if and only if there is a
prime ideal of $L_{K}(E)$ which does not contain any vertices.
\end{corollary}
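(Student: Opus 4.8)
The plan is to read off both implications from the machinery already in place, with Lemma \ref{Lemma 3.6} doing essentially all the work in the nontrivial direction.

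For the forward implication, suppose $L_{K}(E)$ is a prime ring. Then by definition the zero ideal $\{0\}$ is a prime ideal. Since every vertex $v\in E^{0}$ is a nonzero idempotent of $L_{K}(E)$, no vertex lies in $\{0\}$, so $\{0\}$ is itself a prime ideal containing no vertices. (Here we tacitly assume $E^{0}\neq\emptyset$, so that $L_{K}(E)\neq 0$ and the notion of prime ring is non-vacuous.) This settles one direction with no real computation.

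For the converse, I would start from a prime ideal $P$ with $H:=P\cap E^{0}=\emptyset$, set $S=\{v\in B_{H}:v^{H}\in P\}$, and apply Lemma \ref{Lemma 3.6} to conclude that $I_{(H,S)}$ is a prime ideal. The content then reduces to identifying $I_{(H,S)}$ with $\{0\}$. The key observation is that $B_{\emptyset}=\emptyset$: a breaking vertex of $\emptyset$ would be an infinite emitter $w$ for which only finitely many edges $e$ with $s(e)=w$ satisfy $r(e)\notin\emptyset$; but every edge has its range in $E^{0}=E^{0}\backslash\emptyset$, so this condition would force $w$ to emit only finitely many edges, contradicting that $w$ is an infinite emitter. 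Hence $B_{H}=\emptyset$, whence $S=\emptyset$ and $I_{(H,S)}=I_{(\emptyset,\emptyset)}$ is the ideal generated by the empty set, i.e. $\{0\}$. Since Lemma \ref{Lemma 3.6} tells us this ideal is prime, $\{0\}$ is a prime ideal of $L_{K}(E)$, which is exactly the statement that $L_{K}(E)$ is a prime ring.

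There is no genuinely hard step here; the only point that requires a moment's care is the verification that $B_{\emptyset}=\emptyset$, which rests solely on the fact that all edge-ranges lie in $E^{0}$. As an alternative route for the converse one could instead invoke Theorem \ref{primeLPA} and argue that a vertex-free prime ideal forces $E^{0}$ to satisfy the MT-3 condition, but the appeal to Lemma \ref{Lemma 3.6} above is more direct and avoids re-deriving the MT-3 characterization.
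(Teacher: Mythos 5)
Your proof is correct and essentially coincides with the paper's: the paper also settles the nontrivial direction via Lemma \ref{Lemma 3.6}, together with the observation that $H=P\cap E^{0}=\emptyset$ forces $B_{H}=\emptyset$, so that the graded prime ideal $I_{(H,S)}$ sitting inside $P$ is $\{0\}$. The only (cosmetic) difference is that the paper first splits into the cases $P$ graded (where Tomforde's structure theorem gives $P=I_{(H,S)}=\{0\}$ outright) and $P$ non-graded, whereas you apply Lemma \ref{Lemma 3.6} uniformly to both, and you make explicit the verification that $B_{\emptyset}=\emptyset$, a point the paper leaves implicit.
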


\begin{proof}
Suppose $P$ is prime ideal of $L_{K}(E)$ which does not contain any vertices.
If $P$ is a graded ideal, then by \cite{Tomforde 20}, $P=I_{(H,S)}$ where
$H=P\cap E^{0}$ and $S\subset B_{H}$. Since $P$ contains no vertices, $H$ and
$S$ are both empty sets and so the prime ideal $P=\{0\}$, proving that
$L_{K}(E)$ is a prime ring. Suppose the prime ideal $P$ is not graded and
$H=P\cap E^{0}$. By Lemma 3.6, the ideal $I=I_{(H,B_{H})}$ is then a (graded)
prime ideal. Since $I\subset P$ and $I\cap E^{0}=H$ by \cite{Tomforde 20} and
since $P$ contains no vertices, $H$ and hence $B_{H}$ must then be empty sets.
So $\{0\}=I$ is a prime ideal, showing that $L_{K}(E)$ is a prime ring. The
converse is obvious.
\end{proof}

The next Lemma is useful in our investigation and is, perhaps, known. The
proof is immediate if one uses the fact (see for eg., Proposition 10.2 of
\cite{Lam 16}) that an ideal $P$ of a not necessarily unital ring $R$ is a
prime ideal if $P\neq R$ and whenever $aRb\subset P$ for some elements $a,b\in
R$, then either $a\in P$ or $b\in P$.

\begin{lemma}
\label{Lemma 3.8} Let $R$ be a not necessarily unital ring and let $P$ be a
prime ideal of $R$. Then for any idempotent $v\in R$, $vPv$ is a prime ideal
of $vRv$.
\end{lemma}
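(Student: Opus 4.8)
The plan is to reduce everything to the prime-multiplicativity criterion quoted just before the statement: an ideal $Q$ of a (possibly non-unital) ring $S$ is prime iff $Q\neq S$ and, for $a,b\in S$, $aSb\subseteq Q$ forces $a\in Q$ or $b\in Q$. Here I take $S=vRv$, which is a ring with identity $v$ (since $v$ is idempotent, $v$ fixes every $vrv$ on both sides), and $Q=vPv$.

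First I would record the elementary facts about the corner $vRv$. For $a\in vRv$ one has $a=vav$, hence $av=a=va$; and $vPv$ is a genuine two-sided ideal of $vRv$, because $P$ is an ideal of $R$ and $(vrv)(vpv)=v(rvp)v$, $(vpv)(vrv)=v(pvr)v$ lie in $vPv$ whenever $p\in P$. I would also note the useful equivalence that, for $a\in vRv$, $a\in vPv$ iff $a\in P$: one direction is $vPv\subseteq P$, and for the other, $a\in P\cap vRv$ gives $a=vav\in vPv$.

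The key step is the identity $aRb=a(vRv)b$ for $a,b\in vRv$. Indeed, using $a=av$ and $b=vb$, every element $arb$ (with $r\in R$) equals $a(vrv)b$ where $vrv\in vRv$, so $aRb\subseteq a(vRv)b$; the reverse inclusion is immediate from $vRv\subseteq R$. Consequently, if $a,b\in vRv$ satisfy $a(vRv)b\subseteq vPv$, then $aRb=a(vRv)b\subseteq vPv\subseteq P$, and primeness of $P$ in $R$ yields $a\in P$ or $b\in P$; by the equivalence above this says $a\in vPv$ or $b\in vPv$. This is exactly the multiplicative half of the criterion applied to $vPv$ inside $vRv$, and it is where the whole argument lives.

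The only point requiring care is properness, $vPv\neq vRv$. Since $v=vvv\in vRv$, the equality $vPv=vRv$ would give $v\in vPv\subseteq P$; thus $vPv$ is proper precisely when $v\notin P$ (while $v\in P$ forces $vRv=vPv$, the degenerate case). I would therefore carry the harmless standing assumption $v\notin P$, automatic in every intended application where $v$ is a vertex outside the ideal, under which $vPv$ is a proper ideal. Combined with the previous paragraph, the criterion then gives that $vPv$ is a prime ideal of $vRv$. I do not expect a serious obstacle: the substance of the lemma is the one-line identity $aRb=a(vRv)b$, which transfers primeness across the corner; the only thing to watch is the properness clause just discussed.
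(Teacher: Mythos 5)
Your proof is correct and takes essentially the same approach as the paper: the paper declares the lemma ``immediate'' from the element-wise primeness criterion (Proposition 10.2 of Lam) quoted just before the statement, and your identity $aRb=a(vRv)b$ for $a,b\in vRv$, together with the equivalence $a\in vPv \Leftrightarrow a\in P$ for $a\in vRv$, is precisely the computation the paper leaves to the reader. Your properness caveat (requiring $v\notin P$, without which $vPv=vRv$) is a legitimate point the paper glosses over, and it is harmless since in every application in the paper the idempotent $v$ is a vertex lying outside the prime ideal.
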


For the sake of convenience, we introduce the following definition.

\begin{definition}
\label{Cycle without K} A cycle $c$ in a graph $E$ is called a \textbf{cycle
without K}\textit{, }if no vertex on $c$ is the base of another distinct cycle
in $E$ (where distinct cycles possess different sets of edges and different
sets of vertices). The set of all cycles without K in the graph $E$ is denoted
by $C(E)_{\kappa}$.
\end{definition}

We are now ready to prove our main result.

In the following $<X,Y,a>$ denotes the ideal generated by $X\cup Y\cup\{a\}$.
Also recall (from Preliminaries) that for any vertex $v$ we define
$M(v)=\{w\in E^{0}:w\geq v\}$.

\begin{theorem}
\label{Main Theorem} Let $E$ be an arbitrary graph and $K$ be any field. Let
$P$ be an ideal of $L_{K}(E)$ with $P\cap E^{0}=H$. Then $P$ is a prime ideal
of $L_{K}(E)$ if and only if $P$ satisfies one of the following conditions:

(i) \ $P=<H,\{v^{H}:v\in B_{H}\}>$ (where $B_{H}$ may be empty) and
$E^{0}\backslash H$ satisfies the MT-3 condition;

(ii) $P=<H,\{v^{H}:v\in B_{H}\backslash\{u\}\}>$ for some $u\in B_{H}$ (hence
$B_{H}$ is non-empty) and $E^{0}\backslash H=M(u)$;

(iii) $P=<H,\{v^{H}:v\in B_{H}\},f(c)>$ where $c$ is a cycle without K in $E$
based at a vertex $v$, $E^{0}\backslash H=M(v)$ and $f(x)$ is an irreducible
polynomial in $K[x,x^{-1}]$.
\end{theorem}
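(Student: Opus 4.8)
The plan is to characterize prime ideals via the structure theory built in the preceding lemmas, splitting into the graded and non-graded cases. The fundamental tool is the passage to the quotient $L_K(E)/I_{(H,S)} \cong L_K(E\backslash(H,S))$, which converts statements about $P$ into statements about the image of $P$ in a Leavitt path algebra over the quotient graph, where the image contains no vertices (since $I_{(H,S)} \cap E^0 = H$). Thus the analysis reduces to prime ideals not containing any vertices, for which Lemma \ref{Lemma 3.4} is the key: such an ideal, when the ambient algebra is prime (equivalently, when the vertex set satisfies MT-3), must be principal, generated by $p(c)$ for a unique exit-free cycle $c$ and a polynomial $p(x) \in K[x]$.

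\textbf{The graded case.} First I would handle when $P$ is graded. By Tomforde's theorem $P = I_{(H,S)}$ for $S = \{v \in B_H : v^H \in P\}$. Here I would use the observation from the Preliminaries that primality of a graded ideal need only be tested against graded ideals (by \cite{NV 18}, Proposition II.1.4), together with the correspondence between graded ideals of $L_K(E)/I_{(H,S)}$ and graded ideals of $L_K(E)$ containing $I_{(H,S)}$. The quotient $L_K(E\backslash(H,S))$ is then a prime ring, so by Theorem \ref{primeLPA} its vertex set satisfies MT-3. The subtlety is that the quotient graph has extra sink vertices $v'$ for $v \in B_H \backslash S$, and MT-3 on the quotient graph must be translated back into a condition on $E^0 \backslash H$. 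This is where cases (i) and (ii) bifurcate: generically MT-3 on $E^0\backslash H$ suffices and $S = B_H$ (case (i)); but MT-3 can fail on $E^0\backslash H$ while still holding in the quotient precisely when exactly one breaking vertex $u$ is omitted from $S$ and the presence of the sink $u'$ restores the downward-directedness, forcing $E^0\backslash H = M(u)$ (case (ii)). Verifying this dichotomy carefully — that at most one breaking vertex can be excluded and that it forces the maximal-tail-to-$M(u)$ identity — is the main structural work.

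\textbf{The non-graded case.} When $P$ is not graded, Lemma \ref{Lemma 3.6} tells us $I_{(H,S)}$ is a graded prime ideal strictly inside $P$. Passing to the quotient $\bar R = L_K(E)/I_{(H,S)}$, the image $\bar P$ is a nonzero prime ideal containing no vertices of the quotient graph, and by Corollary \ref{Cor 3.7} the quotient is a prime ring, so its vertices satisfy MT-3. Now Lemma \ref{Lemma 3.4} applies in $\bar R$: there is a unique exit-free cycle $\bar c$ and $\bar P = \langle p(\bar c)\rangle$ for some $p(x) \in K[x]$. Primality of $\bar P$ inside the principal-ideal setting $\bar v \bar R \bar v \cong K[x,x^{-1}]$ (via Lemma \ref{Lemma 3.8}, using the isomorphism recalled in the Preliminaries) forces $p$ to be irreducible in $K[x,x^{-1}]$. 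Pulling back, the cycle $\bar c$ lifts to a cycle $c$ in $E$, and I would check that exit-freeness in the quotient corresponds exactly to $c$ being a \emph{cycle without K} in $E$ (Definition \ref{Cycle without K}): an exit to $\bar c$ in the quotient would either be an edge with range outside $H$ giving a genuine exit, or the base of a second distinct cycle, and conversely. The equality $E^0 \backslash H = M(v)$ then follows because the unique exit-free cycle must be reachable from every surviving vertex, which is the MT-3 condition specialized to a graph with a single exit-free cycle.

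\textbf{Main obstacle and converse.} I expect the hardest part to be the bookkeeping around breaking vertices in the quotient graph — specifically, correctly relating MT-3 (and its possible failure) in $E \backslash (H,S)$ to conditions on $E^0 \backslash H$, and pinning down exactly when case (ii) occurs versus case (i). The added sinks $v'$ interact delicately with MT-3, and ensuring the characterization is both necessary and sufficient requires care. For the converse direction, I would verify each of (i), (ii), (iii) does yield a prime ideal by reversing the above: in (i) and (ii) showing the quotient satisfies MT-3 hence is prime by Theorem \ref{primeLPA}; in (iii) showing the image of $P$ in the quotient is $\langle f(c)\rangle$ with $f$ irreducible, which is prime in $K[x,x^{-1}] \cong \bar v \bar R \bar v$, and then lifting primality back via Lemma \ref{Lemma 3.8} and the quotient correspondence. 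Throughout, the identity $I_{(H,S)} \cap E^0 = H$ and the graded-prime test are the load-bearing facts that keep the reductions clean.
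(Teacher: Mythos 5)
The central gap is in your sufficiency argument for condition (iii). You propose to show that $vNv$ (where $N$ is the image of $P$ in $L_K(E\backslash(H,B_H))$) is a maximal ideal of $vL_K(E\backslash(H,B_H))v\cong K[x,x^{-1}]$ and then ``lift primality back via Lemma \ref{Lemma 3.8} and the quotient correspondence.'' But Lemma \ref{Lemma 3.8} only goes one way: if $N$ is prime then the corner $vNv$ is prime. There is no converse in general --- primeness of a corner ideal does not imply primeness of the ideal itself --- and $v$ is not a full idempotent here (the two-sided ideal it generates in the quotient algebra is proper), so no Morita-type transfer is available either. The paper closes exactly this hole with a separate argument: given ideals $A,B$ of $L_K(E\backslash(H,B_H))$ with $AB\subset N$, corner-cutting gives, say, $vAv\subset vNv$; then $A$ cannot contain any vertex $w$, since $w\geq v$ would force $v\in A$ and hence $vL_K(E\backslash(H,B_H))v\subset vAv\subset vNv$, contradicting properness of $vNv$; so $A$ is vertex-free, whence by Lemma \ref{Lemma 3.4} it is principal, $A=\langle q(c)\rangle$, and $q(c)=vq(c)v\in vAv\subset vNv\subset N$ gives $A\subset N$. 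This second use of Lemma \ref{Lemma 3.4} --- to control \emph{arbitrary} ideals in the converse, not just the prime $P$ itself --- is the idea your outline is missing, and without it the step as you describe it fails.

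Two smaller points on the necessity side. In the non-graded case you must also establish $S=B_H$, i.e.\ that \emph{every} $v^H$ with $v\in B_H$ lies in $P$, since that is what the generating set in condition (iii) asserts: if some breaking vertex $u$ were omitted, the quotient graph would contain the sink $u'$, and MT-3 together with the exit-free cycle $c$ forces $u'$ to lie on $c$, which is absurd since vertices on a cycle are not sinks. Your observation that ``the unique exit-free cycle must be reachable from every surviving vertex'' contains this contradiction, but you never draw the conclusion about the breaking vertices, so condition (iii) as stated is not fully derived. Separately, your gloss on the graded dichotomy is backwards: in case (ii) the set $E^0\backslash H=M(u)$ \emph{does} satisfy MT-3 (every vertex reaches $u\in E^0\backslash H$); what distinguishes (i) from (ii) is which set $S$ of breaking vertices the ideal contains, with MT-3 on the quotient graph (including the sink $u'$) being the strictly stronger condition $E^0\backslash H=M(u)$. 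Neither of these affects your overall strategy, which is the same as the paper's, but the Lemma \ref{Lemma 3.8} issue above is a genuine gap.
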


\begin{proof}
Now $H=P\cap E^{0}$. Let $S=\{w\in B_{H}:w^{H}\in P\}$.

Case 1: Suppose $P$ is a graded ideal. Then by (Theorem 5.7, \cite{Tomforde
20}), $P=I_{(H,S)}=<H,\{v^{H}:v\in S\}>$ and that $L_{K}(E)/P\overset{\phi
}{\cong}L_{K}(E\backslash(H,S))$. Thus $P$ is a prime ideal of $L_{K}(E)$ if
and only if $L_{K}(E\backslash(H,S))$ is a prime ring. Theorem \ref{primeLPA}
shows that this is equivalent to $E\backslash(H,S)^{0}$ satisfying the MT-3
condition. Now for each $u\in B_{H}\backslash S$, the corresponding vertex
$u^{\prime}$ is a sink in the graph $E\backslash(H,S)$. In view of the MT-3
condition, there can be at most one sink in $E\backslash(H,S)$. So
$B_{H}\backslash S$ is either empty or a singleton $\{u\}$. Hence $P$ is a
prime ideal $L_{K}(E)$ if and only if either $B_{H}=$ $S$ in which case
$E\backslash(H,S)^{0}=E^{0}\backslash H$ satisfies the MT-3 condition or
$B_{H}\backslash\{u\}=S$ in which case, $(E\backslash(H,S))^{0}=(E^{0}%
\backslash H)\cup\{u^{\prime}\}$ and $w\geq u^{\prime}$ for all $w\in
E\backslash(H,S)^{0}$, equivalently, that for all $w\in E^{0}\backslash H$,
$w\geq u$. Thus the primeness of the graded ideal $P$ is equivalent to
condition (i) or (ii).

Case 2: Let $P$ be a non-graded ideal. Suppose $P$ is prime. By Lemma
\ref{Lemma 3.6}, the graded ideal $I_{(H,S)}$ is also a prime ideal of
$L_{K}(E)$ contained in $P$. So, as proved in Case 1, either (i) $\ B_{H}=$
$S$ and $E^{0}\backslash H$ satisfies the MT-3 condition, or (ii)
$B_{H}\backslash\{u\}=S$ and, for all $w\in E^{0}\backslash H$, $w\geq u$. As
noted earlier, $L_{K}(E)/I_{(H,S)}\overset{\phi}{\cong}L_{K}(E\backslash
(H,S))$. Let $\phi(P/I_{(H,S)})=$ \ $N$. Note that, under condition (i),
$(E\backslash(H,S))^{0}=(E^{0}\backslash H)$ and, under condition (ii),
$(E\backslash(H,S))^{0}=(E^{0}\backslash H)\cup\{u^{\prime}\}$ and $w\geq
u^{\prime}$ for all $w\in(E\backslash(H,S))^{0}$, where $u^{\prime}=\phi
(u^{H}+I_{(H,S)})$. Note that $u^{H}\notin P$, as $u\notin S$ and so
$u^{\prime}\notin N$. Thus, in either case, it is clear that the non-zero
ideal $N$ of $L_{K}(E\backslash(H,S))$ does not contain any vertices. Since
$(E\backslash(H,S))^{0}$ satisfies the MT-3 condition, Lemma \ref{Lemma 3.4}
implies that there is a cycle $c$ based at a vertex $v$ and without exits in
the graph $E\backslash(H,S)$ such that $N$ is the ideal generated by $f(c)$
for some polynomial $f(x)\in K[x]$. \ We now claim that $\ B_{H}=S$ (that is,
condition (ii) \ of the graded prime ideal is not possible). Because if
$B_{H}\backslash\{u\}=S$, then $w\geq u^{\prime}$ for all $w$ in
$(E\backslash(H,S))^{0}$ and since $c$ has no exits, $u^{\prime}$ must lie on
the cycle $c$. But this is impossible, since $u^{\prime}$ is a sink in
$E\backslash(H,S)$. \ So $B_{H}=S$ must hold. Putting all these facts
together, we conclude that $P$ is the ideal generated by $H\cup\{u^{H}:u\in
B_{H}\}\cup\{f(c)\}$. Now $(E\backslash(H,S))^{0}=(E\backslash(H,B_{H}%
))^{0}=(E^{0}\backslash H)$ satisfies the MT-3 condition and contains $c^{0}$.
Clearly then $(E\backslash(H,S))^{0}=E^{0}\backslash H$ $=\{w\in E^{0}:w\geq
v\}=M(v)$, where $v$ is the base of the cycle $c$. It is also clear that $c$
is a cycle without K in $E$. To complete the proof, we need only to show that
$f(x)$ is irreducible. Now $N$ is a prime ideal of $L_{K}(E\backslash(H,S))$
and hence, by Lemma \ref{Lemma 3.8}, $vNv$ is a (non-zero) prime ideal of
$vL_{K}(E\backslash(H,S))v\overset{\theta}{\cong}K[x,x^{-1}]$ generated by
$vf(c)v=f(c)$. Here the isomorphism $\theta$ maps $f(c)$ to $f(x)$ as it maps
$v$ to $1$, $c$ to $x$ and $c^{\ast}$ to $x^{-1}$(as noted in the
Preliminaries). Since $f(x)$ generates the non-zero prime (hence maximal)
ideal $\phi(vNv)$ in the Euclidean domain $K[x,x^{-1}]$, $f(x)$ must\ then be
an irreducible polynomial in $K[x,x^{-1}]$.

Conversely, suppose (a) $E$ contains a cycle $c$ without K and based at a
vertex $v$, (b) $E^{0}\backslash H=M(v)$ and (c) there exists an irreducible
polynomial $f(x)\in K[x,x^{-1}]$ such that $P$ is the ideal generated by
$\{f(c)\}\cup I_{(H,B_{H})}$. Now hypothesis (b) implies $(E\backslash
(H,B_{H}))^{0}=E^{0}\backslash H=$ $M(v)$. So $E\backslash(H,B_{H})$ satisfies
the MT-3 condition and contains the cycle $c$. As $c$ is a cycle without K,
the MT-3 condition implies that $c$ has no exits in the graph $E\backslash
(H,B_{H})$. Now, by (Theorem 5.7, \cite{Tomforde 20}), $L_{K}(E)/I_{(H,B_{H}%
)}\overset{\phi}{\cong}L_{K}(E\backslash(H,B_{H}))$. If $N=\phi(P/I_{(H,B_{H}%
)})$, then, by hypothesis (c), the ideal $N$ is generated by $f(c)$. As $f(x)$
is an irreducible polynomial in $K[x,x^{-1}]\overset{\theta^{-1}}{\cong}%
vL_{K}(E\backslash(H,B_{H}))v$, the ideal $vNv$, being generated by
$vf(c)v=f(c)=\theta^{-1}(f(x))$, is a maximal ideal of the ring $vL_{K}%
(E\backslash(H,B_{H}))v$. We wish to show that $N$ is a prime ideal of
$L_{K}(E\backslash(H,B_{H}))$. Let $A,B$ be two ideals of $L_{K}%
(E\backslash(H,B_{H}))$ such that $AB\subset N$. Now $vAvvBv\subset
vABv\subset vNv$ implies that one of them, say $vAv\subset vNv$. We claim that
$A$ does not contain any vertices. Indeed if $A$ contains a vertex $w$, then
$v\in A$ as $w\geq v$. But then $vL_{K}(E\backslash(H,B_{H}))v\subset A$
\ \ and so $vL_{K}(E\backslash(H,B_{H}))v\subset vAv\subset vNv$, a
contradiction to the fact that $vNv$ is a proper ideal of $vL_{K}%
(E\backslash(H,S))v$. Thus $A$ does not contain any vertices and hence, by
Lemma \ref{Lemma 3.4}, the ideal $A$ of $L_{K}(E\backslash(H,B_{H}))$ will be
generated by a polynomial $q(c)$. Since $q(c)=vq(c)v\in vAv\subset vNv\subset
N$, we conclude that $A\subset N$. Thus we have shown that $N$ is a prime
ideal of $L_{K}(E\backslash(H,B_{H}))$. This implies that $P$ is a prime ideal
of $L_{K}(E)$. If $P$ were a graded ideal, then, since $P\cap E^{0}=H$, Lemma
6 implies that $P=I_{(H,B_{H})}$ and this would imply that ($N$ and hence)
$vNv$ must be $0$. But $vNv\overset{\theta}{\cong}<f(x)>\neq0$, a
contradiction. Hence $P$ must be a non-graded prime ideal of $L_{K}(E)$.
\end{proof}

As a consequence of Theorem \ref{Main Theorem}, we get the following Corollary.

\begin{corollary}
\label{Condition K}An arbitrary graph $E$ satisfies the Condition (K) if and
only if every prime ideal of $L_{K}(E)$ is graded.
\end{corollary}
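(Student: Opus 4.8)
The plan is to establish the Condition (K) corollary by unpacking what Condition (K) means graphically and showing it is exactly what forces Theorem \ref{Main Theorem}(iii) to be vacuous, so that only the graded cases (i) and (ii) survive. Recall the statement to prove: $E$ satisfies Condition (K) if and only if every prime ideal of $L_K(E)$ is graded. I would prove the two directions separately, and the whole argument hinges on connecting Condition (K) to the notion of a \emph{cycle without K} from Definition \ref{Cycle without K}. The first observation I would record is that a cycle $c$ based at a vertex $v$ is a cycle without K precisely when no vertex on $c$ is the base of a second distinct cycle. Comparing this to Condition (K)—no vertex is the base of exactly one cycle—gives the key dictionary: if $E$ satisfies Condition (K), then there can be no cycle without K at all, since any cycle $c$ would have to have a second distinct cycle based at one of its vertices, contradicting the definition of "without K." Conversely, if Condition (K) fails, some vertex $v$ is the base of exactly one cycle, and I would argue that this single cycle is then a cycle without K.

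\textbf{Forward direction.} Assume $E$ satisfies Condition (K), and let $P$ be a prime ideal of $L_K(E)$. By Theorem \ref{Main Theorem}, $P$ satisfies one of (i), (ii), (iii). Cases (i) and (ii) are exactly the graded ideals $I_{(H,S)}$ (with $S=B_H$ or $S=B_H\setminus\{u\}$), so in those cases we are done. It therefore suffices to rule out case (iii). But case (iii) requires the existence of a \emph{cycle without K} in $E$, and by the dictionary above Condition (K) forbids such cycles. Hence (iii) cannot occur, and every prime ideal is graded. The only subtlety I would be careful about is checking that the ideals described in (i) and (ii) are genuinely graded; this is immediate since they are generated by $H$ together with elements $v^H$, which is precisely the form $I_{(H,S)}$ of a graded ideal as recorded in the Preliminaries via \cite{Tomforde 20}.

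\textbf{Reverse direction.} For the contrapositive, assume $E$ fails Condition (K), so some vertex $v$ is the base of exactly one cycle $c$. I would first verify that $c$ is a cycle without K: if some vertex on $c$ were the base of a distinct cycle $c'$, one can produce a second cycle based at $v$ (by routing around $c$ to reach that vertex, traversing $c'$, and returning), contradicting that $v$ is the base of only one cycle. With $c$ in hand as a cycle without K based at $v$, I would construct a non-graded prime ideal explicitly: take $H=E^0\setminus M(v)$, which is hereditary and saturated since $M(v)$ satisfies MT-1 and MT-3 (as $v$ lies on a cycle in $M(v)$, it also satisfies MT-2, so $M(v)$ is a maximal tail and its complement is hereditary saturated). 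Choosing any irreducible $f(x)\in K[x,x^{-1}]$, the ideal $P=\langle H,\{w^H:w\in B_H\},f(c)\rangle$ satisfies condition (iii) of Theorem \ref{Main Theorem} and is therefore a prime ideal; moreover the final paragraph of that theorem's proof shows precisely such a $P$ is \emph{non-graded}. This exhibits a non-graded prime ideal, completing the contrapositive.

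\textbf{The main obstacle} I anticipate is the graph-theoretic lemma linking Condition (K) to the absence of cycles without K—specifically the reverse-direction claim that a vertex which is the base of exactly one cycle gives a cycle without K. The forward half of the dictionary is clean, but the reverse requires the path-splicing argument showing that a distinct cycle anywhere along $c$ manufactures a genuinely second cycle based at $v$, and I would need to handle the bookkeeping of concatenating and rotating paths carefully so that the resulting closed path is actually a cycle (distinct edge/vertex sets) rather than a repeated traversal. Once that combinatorial fact is pinned down, both directions of the corollary follow directly from Theorem \ref{Main Theorem} with essentially no further algebra.
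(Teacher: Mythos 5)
Your forward direction is correct, and it is in fact a more self-contained route than the paper's: the paper disposes of that implication by citing the known result (\cite{Goodearl 13}, \cite{Tomforde 20}) that Condition (K) forces \emph{every} ideal of $L_{K}(E)$ to be graded, whereas you need only the trichotomy of Theorem \ref{Main Theorem} together with the (valid) observation that Condition (K) leaves no room for cycles without K. The reverse direction, however, contains a genuine gap.

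The combinatorial lemma you defer to the end --- that if $v$ is the base of precisely one cycle $c$, then $c$ is a cycle without K --- is false, and the path-splicing argument you flag as the main obstacle cannot be repaired. Take $E^{0}=\{v,w\}$ with edges $a:v\rightarrow w$, $b:w\rightarrow v$, and a loop $e$ at $w$. The only cycle based at $v$ is $ab$ (the closed path $aeb$ has $s(e)=s(b)=w$, so it is not a cycle); thus $v$ is the base of precisely one cycle and Condition (K), read via cycles as in your proposal, fails at $v$. Yet neither cycle of $E$ is a cycle without K: $w$ lies on $ab$ and is the base of the distinct cycle $e$, while $w$ lies on $e$ and is the base of the distinct cycle $ba$. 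Hence $C(E)_{\kappa}=\Phi$, case (iii) of Theorem \ref{Main Theorem} is vacuous, and every prime ideal of this $L_{K}(E)$ is graded (indeed this $L_{K}(E)$ is simple, as $E$ satisfies Condition (L) and has no proper non-empty hereditary saturated subsets). The failure of your splicing is structural, not a matter of bookkeeping: writing $c=\alpha\beta$ with $\alpha$ from $v$ to $w$ and $\beta$ from $w$ back to $v$, the spliced closed path $\alpha c^{\prime}\beta$ passes through $w$ twice --- once as the source of the first edge of $c^{\prime}$ and once as the source of the first edge of $\beta$ --- so it is never a cycle when $w\neq v$; and, as the example shows, a second cycle based at $v$ need not exist at all.

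What is missing is the distinction between cycles and closed simple paths. The paper's own proof does not argue this step; it simply asserts that failure of Condition (K) yields a cycle without K, and that assertion is correct only under the standard formulation of Condition (K) (as in \cite{Aranda Pino 9}): no vertex is the base of precisely one \emph{closed simple path}, where a closed simple path based at $v$ may revisit other vertices but returns to $v$ only at its last edge. Under that reading your splicing idea does succeed: if $v$ is the base of exactly one closed simple path $p$, one first checks that $p$ must be a cycle, and then $\alpha c^{\prime}\beta$ (or a rotation of $c^{\prime}$, if $c^{\prime}$ passes through $v$) is a legitimate \emph{second closed simple path} based at $v$, since revisiting $w$ is now permitted --- the desired contradiction. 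In the example above, $E$ satisfies the closed-simple-path Condition (K) (the paths $ab,aeb,ae^{2}b,\dots$ are all based at $v$), so no inconsistency arises. In short, the dictionary at the level of cycles is not merely hard to verify but false, and both your reverse direction and the corollary itself require the closed-simple-path formulation of Condition (K).
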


\begin{proof}
Suppose $E$ does not satisfy the Condition (K). Then there is a cycle $c$
without K based at a vertex $v$ in $E$. Define $H=\{w\in E^{0}:w\ngeq v\}$.
Clearly $H$ is a hereditary saturated subset of $E^{0}$. In $E\backslash
(H,B_{H})$, $c$ is then a cycle without exits and based at $v$. Now
$vL_{K}(E\backslash(H,B_{H})v\cong K[x,x^{-1}]$. Choose an irreducible
polynomial $f(x)\in K[x,x^{-1}]$. Since $L_{K}(E)/I_{(H,B_{H})}\overset{\phi
}{\cong}L_{K}(E\backslash(H,B_{H}))$, define an ideal $P$ containing
$I_{(H,B_{H})}$ such that $\phi(P/I_{(H,B_{H})})=<f(c)>$. Then the ideal $P$
being generated by $H\cup\{v^{H}:v\in B_{H}\}\cup\{f(c)\}$ will be a
non-graded prime ideal of $L_{K}(E)$, by Theorem \ref{Main Theorem}.
\ Conversely, it is well-known (see \cite{Goodearl 13}, \cite{Tomforde 20})
that Condition (K) on $E$ implies that every ideal of $L_{K}(E)$ is graded.
\end{proof}

\textbf{Construction of non-graded prime ideals}: The proof of Corollary
\ref{Condition K} provides the following method of constructing non-graded
prime ideals from a cycle without K. Suppose $c$ is a cycle without K based at
a vertex $v$ in a graph $E$. Let $H=\{w\in E^{0}:w\ngeq v\}$. Now $H$ is a
hereditary saturated set and $E^{0}\backslash H=M(v)$. So for each monic
irreducible polynomial $f(x)\in K[x,x^{-1}]$, the ideal $P=<I_{(H,B_{H}%
)},f(c)>$ is, by Theorem \ref{Main Theorem}, is a non-graded prime ideal of
$L_{K}(E)$. From Theorem 3.10, it is clear that $P$ uniquely determines and is
determined by the cycle $c$ and the polynomial $f(x))$.

Using the above construction and Theorem \ref{Main Theorem}, we obtain the
following bijective corrspondence that extends Proposition 3.7 of \cite{Aranda
Pino 9} to arbitrary graphs. It is worth noting that, unlike in \cite{Aranda
Pino 9}, the correspondence involves cycles without K (which are perhaps
easily tractable) instead of maximal tails.

\begin{corollary}
\label{Correspondence Thm}Let $E$ be an arbitrary graph and $K$ be any field.
Then the assignment $P\longmapsto(c,<f(x)>)$ as indicated above and in the
proof of Theorem 3.10 defines a bijection between non-graded prime ideals of
$L_{K}(E)$ and the set $C(E)_{\kappa}\times(Spec(K[x,x^{-1}])\backslash\{0\})$
where $C(E)_{\kappa}$ denotes the set of all cycles without K in $E$ (where
cycles obtained by permuting the vertices of a cycle are considered equal).
\end{corollary}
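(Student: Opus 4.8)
The plan is to read the bijection directly off the structure theorem for non-graded primes, Theorem \ref{Main Theorem}(iii), together with the explicit construction recorded just before this corollary. By that theorem a non-graded prime $P$ has the form $\langle H,\{v^{H}:v\in B_{H}\},f(c)\rangle$ with $H=P\cap E^{0}$, a cycle $c\in C(E)_{\kappa}$ based at some vertex $v$, $E^{0}\setminus H=M(v)$, and $f$ irreducible in $K[x,x^{-1}]$. Since $K[x,x^{-1}]$ is a PID, its nonzero prime ideals are exactly the $\langle f\rangle$ with $f$ irreducible, so the target $C(E)_{\kappa}\times(Spec(K[x,x^{-1}])\setminus\{0\})$ is correctly described. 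The three things I would verify are that $P\longmapsto(c,\langle f(x)\rangle)$ is well defined, injective, and surjective.

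For well-definedness I would first note that $H=P\cap E^{0}$ is intrinsic to $P$, so $E^{0}\setminus H=M(v)$ is determined by $P$. Passing to the quotient $L_{K}(E)/I_{(H,B_{H})}\cong L_{K}(E\setminus(H,B_{H}))$, the MT-3 condition on $M(v)$ together with Lemma \ref{Lemma 3.4} produces a unique cycle without exits in $E\setminus(H,B_{H})$, which is exactly $c$ viewed as an element of $C(E)_{\kappa}$; thus the equivalence class of $c$ is determined by $P$. To see that $\langle f(x)\rangle$ is determined, I would set $N=\phi(P/I_{(H,B_{H})})$ and apply Lemma \ref{Lemma 3.8} to obtain the prime ideal $vNv$ of $vL_{K}(E\setminus(H,B_{H}))v\cong K[x,x^{-1}]$; under this isomorphism $vNv=\langle f(x)\rangle$, and since $N$ is intrinsic to $P$, so is $\langle f(x)\rangle$.

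The one genuine subtlety, and the step I expect to be the main obstacle, is checking that neither $c$ nor $\langle f(x)\rangle$ depends on the choice of base vertex on the cycle. Here I would use that any two vertices $u,v$ on a common cycle satisfy $u\geq v$ and $v\geq u$, so $M(u)=M(v)$; hence $H=E^{0}\setminus M(v)$ is the same for every base point, and so is the class of $c$. For the polynomial, the conjugation computation in the proof of Lemma \ref{Lemma 3.4} shows that if $c=\alpha\beta$ is based at $v$ and $g=\beta\alpha$ is the rotation based at $u$, then $\beta^{\ast}(u+\sum_{i}k_{i}g^{r_{i}})\beta=v+\sum_{i}k_{i}c^{r_{i}}$, so the \emph{same} polynomial $f(x)=1+\sum_{i}k_{i}x^{r_{i}}$ represents $P$ at either base point; therefore $\langle f(x)\rangle$ is independent of the rotation.

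Injectivity is then immediate: if $P_{1},P_{2}$ yield the same pair $(c,\langle f\rangle)$, then choosing a common base vertex $v$ of $c$ gives them the same $H=E^{0}\setminus M(v)$, hence the same graded ideal $I_{(H,B_{H})}$ and the same $N$ (the ideal generated by $f(c)$ in the quotient, recovered from $vNv=\langle f(x)\rangle$ via Lemma \ref{Lemma 3.4}), whence $P_{1}=P_{2}$. Surjectivity is exactly the construction preceding the corollary: given $(c,\langle f\rangle)$ with $f$ monic irreducible and $c$ based at $v$, put $H=\{w\in E^{0}:w\ngeq v\}$, which is hereditary, saturated, and satisfies $E^{0}\setminus H=M(v)$; then $P=\langle I_{(H,B_{H})},f(c)\rangle$ is a non-graded prime by Theorem \ref{Main Theorem}, and by the base-independence just established its image under the map is the prescribed pair. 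This would complete the bijection.
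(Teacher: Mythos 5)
Your proof is correct and takes essentially the same route as the paper: the paper obtains the bijection directly from Theorem \ref{Main Theorem}(iii) together with the construction of non-graded prime ideals stated just before the corollary, asserting (without detail) that ``$P$ uniquely determines and is determined by the cycle $c$ and the polynomial $f(x)$.'' Your write-up simply makes explicit what the paper leaves implicit --- well-definedness via Lemmas \ref{Lemma 3.4} and \ref{Lemma 3.8}, base-point independence via the rotation computation $\beta^{\ast}(u+\sum_{i}k_{i}g^{r_{i}})\beta=v+\sum_{i}k_{i}c^{r_{i}}$, and surjectivity via the same construction the paper invokes.
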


If we specialize Theorem \ref{Main Theorem} to the case of a row-finite graph
$E$, the set $B_{H}$ must be empty for any hereditary satuated set $H$ and so
Condition (ii) of Theorem \ref{Main Theorem} does not hold. Hence we get the
following extension of Proposition 2.4 of \cite{Aranda Pino 9} which only
deals with graded prime ideals of $L_{K}(E)$ for row-finite graphs $E$.

\begin{corollary}
\label{Row-finite Case} Let $E$ be a row-finite graph and $K$ be any field. An
ideal $P$ \ of $L_{K}(E)$ with $P\cap E^{0}=H$ is a prime ideal if and only if
either $P=<H>$ and $E^{0}\backslash H$ satisfies the MT-3 condition or
$P=<H\cup\{f(c)\}>$, where $c$ is a cycle without K in $E$ based at a vertex
$v$, $H=\{w\in E^{0}:w\ngeq v\}$ and $f(x)$ is an irreducible polynomial in
$K[x,x^{-1}]$.
\end{corollary}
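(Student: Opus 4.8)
The plan is to derive this as a direct specialization of Theorem \ref{Main Theorem}, whose three cases already exhaust all prime ideals of $L_K(E)$ for an arbitrary graph. The single observation that collapses the general statement to the row-finite one is that a row-finite graph has no infinite emitters, so $s^{-1}(v)$ is finite for every vertex and therefore there are no breaking vertices at all: for any hereditary saturated set $H$, the set $B_H$ is empty. I would state and justify this at the outset, since once $B_H=\emptyset$ the elements $v^H$ never enter the picture and the generating sets in Theorem \ref{Main Theorem} simplify dramatically.

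With $B_H=\emptyset$ in hand, I would walk through the three conditions of Theorem \ref{Main Theorem} and show what each becomes. Condition (i) reads $P=<H,\{v^H:v\in B_H\}>$ with $E^0\backslash H$ satisfying MT-3; since $B_H=\emptyset$, the second generating set is empty and this is exactly $P=<H>$ with $E^0\backslash H$ satisfying MT-3, the first alternative in the Corollary. Condition (ii) requires $P=<H,\{v^H:v\in B_H\backslash\{u\}\}>$ for some $u\in B_H$, which forces $B_H$ to be \emph{non-empty}; since $B_H=\emptyset$ this case cannot occur, exactly as the remark preceding the Corollary anticipates. Condition (iii) reads $P=<H,\{v^H:v\in B_H\},f(c)>$, which with $B_H=\emptyset$ reduces to $P=<H\cup\{f(c)\}>=<H,f(c)>$, where $c$ is a cycle without K based at $v$, $E^0\backslash H=M(v)$, and $f(x)$ is irreducible in $K[x,x^{-1}]$; this is the second alternative.

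The only point requiring a small argument, rather than mere bookkeeping, is the reformulation of the condition $E^0\backslash H=M(v)$ appearing in (iii) into the equivalent description $H=\{w\in E^0:w\ngeq v\}$ used in the Corollary. Here I would invoke the definition $M(v)=\{w\in E^0:w\geq v\}$ from the Preliminaries: the equation $E^0\backslash H=M(v)$ says precisely that a vertex $w$ lies outside $H$ if and only if $w\geq v$, which is the same as saying $w\in H$ if and only if $w\ngeq v$, i.e.\ $H=\{w\in E^0:w\ngeq v\}$. I expect the main (though still modest) obstacle to be confirming that this set-theoretic rewriting is an exact equivalence and that it is consistent with $H$ being hereditary and saturated, which holds automatically since $H$ arises as $P\cap E^0$; no separate verification of the graphical hypotheses on $c$ or the irreducibility of $f$ is needed, as these are inherited verbatim from Theorem \ref{Main Theorem}.
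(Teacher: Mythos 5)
Your proposal is correct and follows exactly the paper's own route: the paper derives this corollary precisely by observing that a row-finite graph has no infinite emitters, hence $B_{H}=\emptyset$ for every hereditary saturated $H$, so case (ii) of Theorem \ref{Main Theorem} is vacuous and cases (i) and (iii) collapse to the two stated alternatives. Your extra check that $E^{0}\backslash H=M(v)$ is the same as $H=\{w\in E^{0}:w\ngeq v\}$ is a trivial complementation the paper leaves implicit, but it is accurate and harmless.
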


Our investigation of prime ideals enables us to give a simpler proof the
simplicity theorem ( Theeorem 3.11 of \cite{Abrams1}) when $L_{K}(E)$ is a
prime ring.

\begin{proposition}
\label{Simplicity Thm}Let $L_{K}(E)$ be a prime ring. Then $L_{K}(E)$ is a
simple ring if and only if every cycle in $E$ has an exit and the only
hereditary saturated subsets of $E^{0}$ are $E^{0}$ and the empty set $\Phi$.
\end{proposition}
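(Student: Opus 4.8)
The plan is to reduce the whole statement to bookkeeping about the vertex set $H=I\cap E^{0}$ of an arbitrary nonzero ideal $I$, using the key elementary fact that $E^{0}\subseteq I$ already forces $I=L_{K}(E)$. Indeed, if every vertex lies in $I$, then by relations (1) and (2) we have $e=s(e)e\in I$ and $e^{\ast}=e^{\ast}s(e)\in I$ for every $e\in E^{1}$, so $I$ contains all the generators of $L_{K}(E)$. Combined with the facts $I_{(H,S)}\cap E^{0}=H$ from \cite{Tomforde 20} and the primeness characterization already developed, this makes both implications short.

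First I would treat the direction $(\Leftarrow)$. Assume Condition (L) holds and that $\emptyset$ and $E^{0}$ are the only hereditary saturated subsets of $E^{0}$, and let $I$ be a nonzero ideal. Since $E$ satisfies Condition (L), the cited result (Proposition 18 of \cite{abrams4}, equivalently Proposition 2.8(ii) of \cite{Aranda Pino 9}) guarantees that $I$ contains a vertex, so $H=I\cap E^{0}$ is a nonempty hereditary saturated set and hence $H=E^{0}$. By the opening observation $I=L_{K}(E)$, so $L_{K}(E)$ is simple; note that primeness is not even needed here.

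Next I would argue $(\Rightarrow)$ by contraposition, exhibiting a proper nonzero ideal whenever either graphical condition fails. If some hereditary saturated set $H$ satisfies $\emptyset\neq H\neq E^{0}$, then $I_{(H,\emptyset)}\cap E^{0}=H$ shows that $I_{(H,\emptyset)}$ is nonzero (it contains the vertices of $H$) and proper (some vertex lies outside it), so $L_{K}(E)$ is not simple. If instead Condition (L) fails, there is a cycle $c$ without exits based at some $v$; since $c$ has no exit, no vertex of $c$ can be the base of a second distinct cycle, so $c$ is a cycle without K in the sense of Definition \ref{Cycle without K}. Setting $H=\{w\in E^{0}:w\ngeq v\}$, which is hereditary saturated with $E^{0}\setminus H=M(v)$, the construction following Corollary \ref{Condition K} (an instance of Theorem \ref{Main Theorem}(iii)) produces, for any irreducible $f\in K[x,x^{-1}]$, a non-graded prime ideal $P=\langle I_{(H,B_{H})},f(c)\rangle$. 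Being prime it is proper, and it is nonzero since $f(c)\neq0$; hence $L_{K}(E)$ is again not simple. This is precisely where the standing hypothesis that $L_{K}(E)$ is prime streamlines matters: MT-3 on $E^{0}$ lets me invoke the prime ideal theory of Theorem \ref{Main Theorem} to hand me a proper nonzero ideal directly.

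I expect the main obstacle to be the properness in the forward treatment of Condition (L): producing an ideal from a cycle without exits that provably does not swallow a vertex. Doing this by hand with an element such as $c-v$ would require controlling $vL_{K}(E)v\cong K[x,x^{-1}]$ against the two-sided ideal it generates, which is delicate; the clean route is the one above, since a prime ideal is proper by definition and the construction after Corollary \ref{Condition K} guarantees nonzeroness. The only other nontrivial input is the cited fact that, under Condition (L), every nonzero ideal meets $E^{0}$; everything else is routine use of $I\cap E^{0}=E^{0}\Rightarrow I=L_{K}(E)$ together with $I_{(H,S)}\cap E^{0}=H$.
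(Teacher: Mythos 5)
Your proof is correct, and while it rests on the same two engines as the paper's (a nontrivial hereditary saturated set gives a nonzero proper graded ideal; a cycle without exits plus Theorem \ref{Main Theorem}(iii) gives a non-graded prime ideal), the routing differs enough to be worth comparing. For sufficiency, the paper takes a proper ideal $N$, notes $N\cap E^{0}=\Phi$, and then invokes Lemma \ref{Lemma 3.4} --- which requires the MT-3 condition, i.e.\ the primeness hypothesis --- to force $N=0$ under Condition (L); you instead use the underlying cited fact directly (under Condition (L) every nonzero ideal contains a vertex) together with the elementary observation that $E^{0}\subseteq I$ forces $I=L_{K}(E)$. This bypasses Lemma \ref{Lemma 3.4} entirely and, as you correctly note, makes the primeness hypothesis superfluous in that direction, so you in effect recover the sufficiency half of the general simplicity theorem for arbitrary graphs. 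For necessity, the paper argues by contradiction inside the prime ring, where the hypotheses collapse $H$ to $\Phi$ and the offending prime ideals take the bare form $\langle f(c)\rangle$; you argue by contraposition via the construction following Corollary \ref{Condition K} with $H=\{w\in E^{0}:w\ngeq v\}$, which requires --- and you correctly supply --- the observation that a cycle without exits is automatically a cycle without K in the sense of Definition \ref{Cycle without K}. Both necessity arguments ultimately lean on Theorem \ref{Main Theorem}(iii) to manufacture a proper nonzero ideal, so they are substantively the same; what your arrangement buys is a cleaner, hypothesis-free sufficiency direction, while the paper's buys economy, staying entirely within its own lemma stack and displaying how primeness collapses the ideal structure.
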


\begin{proof}
Since $L_{K}(E)$ be a prime ring, $E^{0}$ satisfies the MT-3 condition, by
Theorem \ref{primeLPA}.

Suppose $L_{K}(E)$ is a simple ring. Since the ideal generated by a non-empty
proper hereditary saturated subset is a non-zero proper ideal of $L_{K}(E)$,
the simplicity of $L_{K}(E)$ obviously imply that the only hereditary
saturated subsets of $E^{0}$ are $\Phi$ and $E^{0}$. By way of contradiction,
suppose there is a cycle $c$ without exits in $E$. Then since $E^{0}$
satisfies the MT-3 condition, an appeal to Theorem \ref{Main Theorem} yields
that there are infinitely many non-graded prime ideals of $L_{K}(E)$ of the
form $<f(c)>$, for various irreducible polynomials $f(x)$ in $K[x,x^{-1}]$, a
contradiction to the simplicity of $L_{K}(E)$. Thus every cycle in $E$ has an exit.

Conversely, suppose $E^{0}$ satisfies the two conditions. Let $N$ be a proper
ideal of $L_{K}(E)$. Since $E^{0},\Phi$ are the only hereditary saturated
subsets of $E^{0}$, $N\cap E^{0}=\Phi$. Since $E^{0}$ also satisfies the MT-3
condition and every cycle in $E$ has an exit, we appeal to Lemma
\ref{Lemma 3.4} to conclude that $N=0$. Hence $L_{K}(E)$ is simple.
\end{proof}

\textbf{Remark: }In the sequel\textbf{\ }we shall be using the following
observation that follows from Theorem \ref{Main Theorem}: If $P=<I_{(H,B_{H}%
)},f(c)>$ and $Q=<I_{(H,B_{H})},g(c)>$ are two non-graded prime ideals with
$P\cap E^{0}=H=Q\cap E^{0}$, then $P\nsubseteqq Q$ and $Q\nsubseteqq P$. This
is because, if one of the proper inclusion holds, say $P\subsetneqq Q$ then,
from the proof of Theorem \ref{Main Theorem}, $f(c)$ is a divisor of $g(c)$
which will imply that in $K[x,x^{-1}]$, $f(x)$ is a proper divisor of $g(x)$,
contradicting the fact that $g(x)$ is an irreducible polynomial in
$K[x,x^{-1}]$.

\section{Primitive Ideals of $L_{K}(E)$}

We next characterize the primitive ideals of the Leavitt path algebra
$L_{K}(E)$. As noted in \cite{abrams3}, the map $\phi:L_{K}(E)\rightarrow$
$(L_{K}(E))^{op}$ given by \ $%
%TCIMACRO{\tsum \limits_{i=1}^{n}}%
%BeginExpansion
{\textstyle\sum\limits_{i=1}^{n}}
%EndExpansion
k_{i}\alpha_{i}\beta_{i}^{\ast}\mapsto%
%TCIMACRO{\tsum \limits_{i=1}^{n}}%
%BeginExpansion
{\textstyle\sum\limits_{i=1}^{n}}
%EndExpansion
k_{i}\beta_{i}\alpha_{i}^{\ast}$ is a ring isomorphism and so the distinction
between \ the left and right primitivity of the ring $L_{K}(E)$ vanishes. Thus
a graded ideal $I$ of $L_{K}(E)$ is right primitive if and only if it is left
primitive, since by \cite{Tomforde 20}, $L_{K}(E)/I$ is again a right ( =
left) primitive Leavitt path algebra. That the same conclusion holds for
non-graded ideals will follow from our internal description of primitive
ideals (Theorem \ref{Primitive Ideal Thm} below). Using Theorem
\ref{Primitive Ideal Thm}, we also obtain a description of those graphs $E$
for which every prime ideal of $L_{K}(E)$ is primitive.

We begin with the following important concept introduced in \cite{abrams3}.

\begin{definition}
\label{CSP Definition} Let $E$ be a graph. A subset $S$ of $E^{0}$ is said to
have the Countable Separation Property (CSP, for short), if there is a
countable set $C$ of vertices in $E$ with the property that to each $u\in S$
there is a $v\in C$ such that $u\geq v$.
\end{definition}

For example, if $E$ is a row-finite graph, then for any vertex $v\in E$, the
tree $T(v)=\{w:v\geq w\}$ is a countable set. If in addition, $E^{0}$
satisfies the MT-3 condition, then it is easy to see that $E^{0}$ will always
have the CSP with respect to the tree $T(v)$ for any fixed vertex $v$. \ Also,
in a countable graph $E$, the MT-3 condition will trivially imply the CSP for
$E^{0}$.

The following characterization of primitive Leavitt path algebras of arbitrary
graphs was obtained in \cite{abrams3}.

\begin{theorem}
\label{Primitive LPA}(\cite{abrams3}) Let $E$ be an arbitrary graph and $K$ be
any field. Then the Leavitt path algebra $L_{K}(E)$ is right (= left)
primitive if and only if Condition (L) holds in $E$, and $E^{0}$ satisfies the
MT-3 condition and possesses the countable separation property.
\end{theorem}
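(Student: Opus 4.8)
The plan is to prove both implications, the harder direction being the construction of a faithful simple module from the three hypotheses. Throughout I use that a ring is primitive precisely when it admits a faithful simple module, and that every primitive ring is prime. First, for necessity, suppose $L_{K}(E)$ is primitive, with faithful simple module $S$. Since primitive rings are prime, Theorem~\ref{primeLPA} immediately gives that $E^{0}$ satisfies the MT-3 condition. For Condition (L) I argue by contradiction: if some cycle $c$ based at $v$ had no exit, then (as recorded in the Preliminaries) $vL_{K}(E)v\cong K[x,x^{-1}]$. Because $v\neq 0$ and $S$ is faithful, $vS\neq 0$, and a routine check shows $vS$ is a faithful simple module over the corner $vL_{K}(E)v$; hence $vL_{K}(E)v$ would be a commutative primitive ring, forcing it to be a field. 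As $K[x,x^{-1}]$ is not a field, this is absurd, so (L) holds.

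The remaining point, the CSP, is where the real content lies. Fixing $0\neq x\in S$, I may replace $x$ by $vx$ for a vertex $v$ with $vx\neq 0$, so that $x=vx$. For an arbitrary vertex $u$, faithfulness gives $uS\neq 0$, so I choose $y$ with $uy\neq 0$; simplicity yields $x\in L_{K}(E)(uy)$, and writing $x=vx=\sum_{i}k_{i}\,v\alpha_{i}\beta_{i}^{*}u\,y$ and examining a surviving term shows $u\geq r(\beta_{i})$ and $v\geq r(\beta_{i})$. Thus every vertex connects down into the tree $T(v)$, i.e.\ $T(v)$ is a (not yet countable) separating set. The crux of the necessity direction, and what I regard as the main obstacle, is to shrink $T(v)$ to a \emph{countable} separating set $C$. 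Rather than letting the vertices $r(\beta_{i})$ depend unboundedly on $u$, I intend to exploit the single generator $x$ more carefully, extracting from $x$ a countable family of paths out of $v$ whose ranges already catch every vertex, using Condition (L) to control repetition along cycles together with primeness. I expect this to be the technically hardest step.

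For sufficiency, assume (L), MT-3 and CSP, and let $C=\{c_{1},c_{2},\dots\}$ be a countable separating set. Using MT-3 I build a descending chain of vertices $w_{1}\geq w_{2}\geq\cdots$ with $c_{i}\geq w_{i}$ for every $i$: take $w_{1}=c_{1}$, and given $w_{i}$ with $c_{1},\dots,c_{i}\geq w_{i}$, apply MT-3 to $w_{i}$ and $c_{i+1}$ to obtain $w_{i+1}$ with $w_{i}\geq w_{i+1}$ and $c_{i+1}\geq w_{i+1}$. Concatenating the connecting paths produces an infinite path $p$ (or, in the degenerate case where the chain stabilises, a cofinal sink), and I then form the associated Chen simple module $V_{[p]}$, spanned by the infinite paths tail-equivalent to $p$, on which a vertex $u$ acts nontrivially exactly when $u\geq w_{n}$ for some vertex $w_{n}$ lying on $p$. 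Since every $u$ satisfies $u\geq c_{i}\geq w_{i}$ for some $i$, we get $uV_{[p]}\neq 0$ for all $u\in E^{0}$, so $\mathrm{ann}(V_{[p]})$ is a two-sided ideal meeting $E^{0}$ trivially.

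By Lemma~\ref{Lemma 3.4}, a non-zero ideal containing no vertices would force the existence of a cycle without exit, contradicting (L); hence $\mathrm{ann}(V_{[p]})=\{0\}$ and $V_{[p]}$ is faithful, so $L_{K}(E)$ is primitive. The only imported ingredient in the sufficiency direction is the simplicity of $V_{[p]}$ (Chen's construction of irreducible representations), which I would cite rather than reprove; everything else reduces to Lemma~\ref{Lemma 3.4}, Theorem~\ref{primeLPA}, and the elementary corner computation. Accordingly, I anticipate the entire difficulty of the theorem to be concentrated in the countability step of the CSP.
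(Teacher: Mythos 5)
First, a point of reference: the paper never proves Theorem \ref{Primitive LPA} --- it is imported wholesale from \cite{abrams3} and used as an ingredient in Theorem \ref{Primitive Ideal Thm} --- so your attempt must stand on its own. Much of it does. Primitivity implies primeness, so MT-3 follows from Theorem \ref{primeLPA}; your corner argument for Condition (L) is correct (if $vS\neq 0$ then $vS$ is a faithful simple $vL_{K}(E)v$-module, and a commutative primitive ring is a field, which $K[x,x^{-1}]$ is not); and your sufficiency direction --- descending chain $w_{1}\geq w_{2}\geq\cdots$ beneath the countable separating set, the Chen simple module $V_{[p]}$ of the resulting infinite path, and then Lemma \ref{Lemma 3.4} plus Condition (L) to force $\mathrm{ann}(V_{[p]})=\{0\}$ --- is sound and is a genuinely different route from \cite{abrams3}, which instead builds a proper left ideal comaximal with every nonzero two-sided ideal. (Two repairs are needed there: the chain can stabilize at a vertex that is \emph{not} a sink, in which case you must prolong $p$ by an arbitrary infinite path, or a path to a sink, emanating from that vertex; and the simplicity of $V_{[p]}$ is a substantial external import, not something this paper provides.)

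The genuine gap is the necessity of the Countable Separation Property, and you have in effect conceded it rather than closed it. What you actually establish --- for every vertex $u$ there is a vertex $r(\beta_{i})$ with $u\geq r(\beta_{i})$ and $v\geq r(\beta_{i})$ --- is nothing more than a restatement of MT-3, which already yields, for any $u$, a common vertex below $u$ and $v$; it carries no countability information whatsoever, since $T(v)$ is in general uncountable (e.g.\ when $v$ is an infinite emitter in an uncountable graph), and that is precisely the regime the theorem is about. Your proposed remedy, ``extracting from $x$ a countable family of paths out of $v$ whose ranges already catch every vertex,'' is not a routine refinement but the whole difficulty: $x$ is an element of an abstract simple module, it has no finite or countable support inside $L_{K}(E)$, and the vertices $r(\beta_{i})$ your computation produces depend on $u$ with no mechanism confining them to one fixed countable set. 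In \cite{abrams3} this implication requires a separate and delicate argument (working with the left ideal $\mathrm{ann}(x)$, its comaximality with every nonzero two-sided ideal, and ideals generated by hereditary sets of vertices that fail to connect to a given countable set); nothing of that sort appears in your proposal. As it stands you have proved the sufficiency and only two of the three necessary conditions, so the proof is incomplete at exactly the point you yourself identify as its crux.
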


By using Theorems \ref{Main Theorem} and \ref{Primitive LPA}, we are able to
characterize the primitive ideals of $L_{K}(E)$ in the next theorem. We also
need the well-known fact (see \cite{Lanski 17}, Theorem 1) that a
not-necessarily unital ring $R$ is right (left) primitive if and only if there
is an idempotent $a\in R$ such that $aRa$ is a right (left) primitive ring.

\begin{theorem}
\label{Primitive Ideal Thm}Let $E$ be an arbitrary graph and $K$ be any field.
Let $P$ be an ideal of $L_{K}(E)$ with $H=P\cap E^{0}$. Then $P$ is right (=
left) primitive if and only if $P$ satisfies one of the following:

(i) $P$ is a non-graded prime ideal;

(ii) $P$ is a graded prime ideal of the form $I_{(H,B_{H}\backslash\{u\})}$
for some $u\in B_{H}$;

(iii) $P$ is a\ graded ideal of the form $I_{(H,B_{H})}$ (where $B_{H}$ may be
empty) and $E^{0}\backslash H$ satisfies the MT-3 condition, the Condition (L)
and the Countable Separation Property.
\end{theorem}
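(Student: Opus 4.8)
The plan is to reduce the primitivity of $P$ to the primitivity of the quotient ring $L_K(E)/P$, and then handle the three cases according to whether $P$ is non-graded or graded, exploiting the structural results already established. Recall that an ideal $P$ of $L_K(E)$ is right (= left) primitive precisely when the quotient $L_K(E)/P$ is a right (= left) primitive ring. **First I would** dispose of the graded cases by using the isomorphism $L_K(E)/I_{(H,S)}\cong L_K(E\backslash(H,S))$ from Theorem 5.7 of \cite{Tomforde 20}, so that the primitivity of a graded prime $P=I_{(H,S)}$ is equivalent to the primitivity of the Leavitt path algebra of the quotient graph, which by Theorem \ref{Primitive LPA} amounts to Condition (L), MT-3, and the CSP holding in $(E\backslash(H,S))^0$.

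For case (iii), where $P=I_{(H,B_H)}$ with $B_H=S$, we have $(E\backslash(H,B_H))^0=E^0\backslash H$, so the three conditions on the quotient graph translate directly into the three conditions on $E^0\backslash H$ in the statement. For case (ii), where $S=B_H\backslash\{u\}$, the quotient graph acquires a single extra sink $u'$; since $u'$ is a sink it trivially satisfies Condition (L), the presence of the sink forces MT-3 (every vertex connects down to $u'$) and the CSP (the singleton $\{u'\}$ separates), so $L_K(E\backslash(H,S))$ is automatically primitive, giving primitivity of $P$ with no extra hypotheses. **The most delicate step** is case (i), the non-graded primes. Here $P$ is not graded, so I cannot apply Theorem 5.7 directly to $P$; instead I would pass to the graded prime $I_{(H,B_H)}\subset P$ (noting $S=B_H$ in the non-graded case by the proof of Theorem \ref{Main Theorem}), form $N=\phi(P/I_{(H,B_H)})$ inside $L_K(E\backslash(H,B_H))$, and show primitivity there.

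**The key idea for case (i)** is to produce an idempotent $a$ in $L_K(E\backslash(H,B_H))/N$ with $a(\cdot)a$ a primitive ring, using the Lanski--type result (\cite{Lanski 17}, Theorem 1) quoted before the statement. Taking $a=v$, the base of the cycle $c$, the corner $vL_K(E\backslash(H,B_H))v\cong K[x,x^{-1}]$, and modding out by $vNv$ which corresponds to the maximal ideal $\langle f(x)\rangle$ (as established in the proof of Theorem \ref{Main Theorem}), yields the field $K[x,x^{-1}]/\langle f(x)\rangle$; a field is certainly a primitive ring. Hence the corner $v\bigl(L_K(E\backslash(H,B_H))/N\bigr)v$ is primitive, so by the corner-primitivity criterion $L_K(E\backslash(H,B_H))/N\cong L_K(E)/P$ is primitive, giving that $P$ is primitive. **I expect the main obstacle** to be verifying that the corner of the quotient $L_K(E)/P$ at $v$ really coincides with $vL_K(E\backslash(H,B_H))v$ modulo $vNv$ — that is, checking that taking corners commutes with the quotient maps — and confirming that $v$ remains a nonzero idempotent in $L_K(E)/P$ (which holds since $v\in E^0\backslash H$, so $v\notin P$). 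Once these identifications are in place, the conjunction of the three cases shows $P$ is primitive exactly when one of (i), (ii), (iii) holds, and combining with Theorem \ref{Main Theorem} confirms in particular that every non-graded prime is primitive.
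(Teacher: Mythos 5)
Your proposal follows the paper's own proof almost step for step: the graded cases are handled via Tomforde's isomorphism $L_K(E)/I_{(H,S)}\cong L_K(E\backslash(H,S))$ together with the characterization of primitive Leavitt path algebras, and the non-graded case is handled by exactly the paper's corner argument --- $vNv$ is the maximal ideal $\langle f(x)\rangle$ of $vL_K(E\backslash(H,B_H))v\cong K[x,x^{-1}]$, the corner of $L_K(E)/P$ at $\bar v$ is therefore a field, and Lanski's corner-primitivity criterion lifts primitivity to $L_K(E)/P$. The necessity direction (primitive $\Rightarrow$ prime, then quote the trichotomy of the main theorem plus the primitive-LPA characterization) is treated lightly in your write-up but is recoverable from your ``equivalence'' framing for graded ideals.

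The one place where your reasoning, as written, is wrong is case (ii). You assert that ``the presence of the sink forces MT-3'' and that the quotient graph ``trivially satisfies Condition (L) since $u'$ is a sink.'' Neither implication holds: a graph can contain a sink and still fail MT-3 (take two disconnected components), and it can contain a sink while also containing a cycle with no exit. The correct source of these properties is the \emph{primeness} of $P=I_{(H,B_H\backslash\{u\})}$: by case (ii) of Theorem \ref{Main Theorem}, primeness forces $E^0\backslash H=M(u)$, i.e.\ every vertex of the quotient graph connects to $u'$. From \emph{that} fact, combined with $u'$ being a sink, all three conditions follow --- MT-3 because any two vertices flow to $u'$, CSP with respect to the singleton $\{u'\}$, and Condition (L) because every vertex on a cycle has a path to $u'$, which must leave the cycle and so provides an exit. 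Your parenthetical ``every vertex connects down to $u'$'' shows you have the right fact in hand, but the causal attribution must be reversed: it is Theorem \ref{Main Theorem} applied to the prime $P$, not the existence of the sink, that supplies it.
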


\begin{proof}
Sufficiency: (i) Suppose $P$ is a non-graded prime ideal. We follow the
notation used in the proof of the necessity part of Theorem \ref{Main Theorem}%
. As noted there, the ideal $N\cong P/I_{(H,S)}$ of $L_{K}(E\backslash(H,S))$
is such that $vNv$ is a maximal ideal of $vL_{K}(E\backslash(H,S))v\cong
K[x,x^{-1}]$. Now $v(L_{K}(E\backslash(H,S)v/vNv\cong v^{\prime}%
(L_{K}(E\backslash(H,S))/N)v^{\prime}\cong\bar{v}(L_{K}(E)/P)\bar{v}$ under
the natural isomorphisms, \ where $v^{\prime}=v+N$ and $\bar{v}=v+P$. Since
$v(L_{K}(E\backslash(H,S)v/vNv$\ is a field, $\bar{v}(L_{K}(E)/P)\bar{v}$ is a
commutative primitive ring. As noted above in the statement preceding Theorem
\ref{Primitive Ideal Thm}, $L_{K}(E)/P$ is then a right as well as a left
primitive ring,\ by (Theorem 1, \cite{Lanski 17}). We thus conclude that $P$
is both a right and a left primitive ideal of $L_{K}(E)$.

(ii) Suppose $P$ is a graded prime ideal of the form $I_{(H,B_{H}%
\backslash\{u\})}$ for some breaking vertex $u$ of $H$. By Theorem
\ref{Main Theorem} $E^{0}\backslash H=M(u)$ so that $w\geq u$ for all $w\in
E^{0}\backslash H$. Thus in the graph $E\backslash(H,S)$, $w\geq u^{\prime}$
for all $w\in(E\backslash(H,S))^{0}$ and $u^{\prime}$ is a sink. This shows
that $E\backslash(H,S)$ satisfies not only the MT-3 condition, but also the
countable separation property with respect to $\{u^{\prime}\}$. Moreover,
Condition (L) also holds, since every vertex $w$ on any cycle in the graph
$E\backslash(H,S)$ satisfies $w\geq u^{\prime}$ (and $u^{\prime}$ is a sink).
We appeal to Theorem \ref{Primitive LPA} to conclude that $L_{K}%
(E\backslash(H,S))$ is a right (= left) primitive ring. Since $L_{K}%
(E\backslash(H,S))\cong L_{K}(E)/P$, we then conclude that $P$ is both a right
and a left primitive ideal of $L_{K}(E)$.

(iii) Suppose $P$ is a graded prime ideal of the form $P=I_{(H,B_{H})}$ with
$E^{0}\backslash H$ satisfying the MT-3 condition, the Condition (L) and the
Countable Separation Property. Now by \cite{Tomforde 20}, $L_{K}(E)/P\cong
L_{K}(E\backslash(H,B_{H}))$. From the definition of the graph $E\backslash
(H,B_{H})$, $(E\backslash(H,B_{H}))^{0}=E^{0}\backslash H$ and hence satisfies
the MT-3 condition, the condition (L) and the countable separation property.
By Theorem \ref{Primitive LPA}, $L_{K}(E\backslash(H,B_{H})$ \ and hence
$L_{K}(E)/P$\ is then a right (= left) primitive ring. This shows that $P$ is
both a right and left primitive ideal of $L_{K}(E)$.

Necessity: Follows from the fact that a primitive ideal is always prime and
from Theorem \ref{Primitive LPA} and the cases (i),(ii) of Theorem
\ref{Main Theorem}.
\end{proof}

Theorem \ref{Primitive Ideal Thm} establishes that the distinction between the
right and the left primitivity for ideals in a Leavitt path algebra vanishees.
Hence we shall drop the right/left distinction for primitive ideals of
$L_{K}(E)$. From Theorem \ref{Primitive Ideal Thm}, we can easily describe all
those graphs $E$ for which every prime ideal of $L_{K}(E)$ is primitive.

\begin{corollary}
\label{Prime=Primitive}Let $E$ be an arbitrary graph and $K$ any field. Then
every prime ideal of the Leavitt path algebra $L_{K}(E)$ is primitive if and
only if $E$ satisfies the Condition (K) and every maximal tail $M$ in $E^{0}$
satisfies the Countable separation property.
\end{corollary}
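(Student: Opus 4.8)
The plan is to read off the result by directly comparing the classification of prime ideals in Theorem \ref{Main Theorem} with the classification of primitive ideals in Theorem \ref{Primitive Ideal Thm}. Both theorems partition the prime ideals $P$ (with $H=P\cap E^{0}$) into the same three shapes, so the comparison is essentially bookkeeping: a non-graded prime (case (iii) of Theorem \ref{Main Theorem}) is always primitive by Theorem \ref{Primitive Ideal Thm}(i); a graded prime of the form $I_{(H,B_{H}\backslash\{u\})}$ (case (ii)) is always primitive by Theorem \ref{Primitive Ideal Thm}(ii); and a graded prime of the form $I_{(H,B_{H})}$ with $E^{0}\backslash H$ satisfying MT-3 (case (i)) is primitive \emph{precisely} when, in addition, $E^{0}\backslash H$ satisfies Condition (L) and the Countable Separation Property, by Theorem \ref{Primitive Ideal Thm}(iii). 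Thus the only primes that can fail to be primitive are those of case (i).

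First I would record the correspondence between case-(i) primes and maximal tails: since $H$ being hereditary saturated is equivalent to $E^{0}\backslash H$ satisfying MT-1 and MT-2, a subset $M\subseteq E^{0}$ is a maximal tail exactly when $M=E^{0}\backslash H$ for a hereditary saturated $H$ with $M$ satisfying MT-3. Hence the case-(i) prime ideals $I_{(H,B_{H})}$ are in bijection with the maximal tails $M=E^{0}\backslash H$ of $E^{0}$. Combining this with the previous paragraph, the statement ``every prime ideal of $L_{K}(E)$ is primitive'' is equivalent to the statement that every maximal tail $M$ of $E^{0}$ satisfies both Condition (L) and the CSP.

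It then remains to show that ``every maximal tail satisfies Condition (L)'' is equivalent to Condition (K), after which the corollary follows at once. For one direction I would argue contrapositively: if Condition (K) fails, there is a cycle $c$ without K based at some $v$; taking $H=\{w\in E^{0}:w\ngeq v\}$ as in the construction following Corollary \ref{Condition K}, the set $M(v)=E^{0}\backslash H$ is a maximal tail, and (as already established in the converse part of Theorem \ref{Main Theorem}) the MT-3 condition forces $c$ to have no exit in $E\backslash(H,B_{H})$, so $M(v)$ violates Condition (L). For the reverse direction, assuming Condition (K), I would show that any cycle lying in a maximal tail must have an exit there: using MT-1, every vertex of a cycle based at a vertex $w\in M$ again lies in $M$, so such a cycle together with all its edges lives inside the quotient graph $E\backslash(H,B_{H})$, and a cycle without an exit in that quotient would then be a cycle without K in $E$, contradicting Condition (K) (equivalently, $C(E)_{\kappa}=\emptyset$ by Corollary \ref{Condition K} and Corollary \ref{Correspondence Thm}).

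The main obstacle I anticipate is the equivalence in the last paragraph, specifically the passage between ``no exit in the quotient graph $E\backslash(H,B_{H})$'' and ``cycle without K in $E$.'' Care is needed because a vertex on the cycle may emit edges into $H$ that disappear in the quotient, so one must check that any second cycle based on the given cycle stays entirely within $M$ (this is exactly where MT-1 is essential) before concluding that an exit-free cycle in the quotient genuinely witnesses a failure of Condition (K). Once this graph-theoretic translation is in place, the remainder of the argument is a direct transcription of the two classification theorems.
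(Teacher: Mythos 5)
Your proposal is correct, and while it rests on the same two pillars as the paper (Theorem \ref{Main Theorem} and the primitivity results), it is organized along a genuinely different route. The paper's forward direction first argues by contradiction that no non-graded prime can exist once all primes are primitive: such a prime contains the graded prime $I_{(H,B_{H})}$ (Lemma \ref{Lemma 3.6}), whose primitivity would force Condition (L) on $E^{0}\backslash H$ via Theorem \ref{Primitive LPA}, contradicting the exit-free cycle; it then converts ``all primes graded'' into Condition (K) by citing Corollary \ref{Condition K}, and finally reads off (L) and CSP for each maximal tail from primitivity of the associated graded primes. Its converse dismisses ``Condition (K) implies every maximal tail satisfies Condition (L)'' as easy to see. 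You instead make the corollary a pure bookkeeping consequence of matching Theorem \ref{Main Theorem} against Theorem \ref{Primitive Ideal Thm}: non-graded primes and the $I_{(H,B_{H}\backslash\{u\})}$ primes are automatically primitive, so ``every prime is primitive'' is literally equivalent to ``every maximal tail satisfies (L) and CSP,'' and the only remaining content is the graph-theoretic equivalence of ``every maximal tail satisfies (L)'' with Condition (K), which you prove in both directions --- supplying, via the MT-1 cycle-extraction argument, exactly the direction the paper leaves implicit, and correctly flagging that MT-1 is what keeps the second cycle inside $M$. What the paper's route buys is brevity, since it recycles Corollary \ref{Condition K} and the ring-level criterion applied to quotients; what your route buys is transparency, since the algebra reduces to comparing two classifications and the combinatorial content is isolated in a self-contained, reusable lemma. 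Note finally that your appeal to ``Condition (K) fails implies there is a cycle without K'' is precisely the assertion the paper itself makes at the start of the proof of Corollary \ref{Condition K} (and underlies Corollary \ref{Correspondence Thm}), so you introduce no unproved step beyond what the paper already assumes.
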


\begin{proof}
Suppose every prime ideal of $L_{K}(E)$ is primitive. First of all there
cannot be any non-graded prime ideals in $L_{K}(E)$. Because a non-graded
prime ideal $P$, from Theorem \ref{Main Theorem}, is of the form
$P=<I_{(H,B_{H})},f(c)>$ and, by Lemma \ref{Lemma 3.6}, it always contains the
prime ideal $Q=I_{_{(H,B_{H})}}$. This leads to a contradiction, since, on the
one hand, the description of $P$ (from Theorem \ref{Main Theorem}) implies
$E^{0}\backslash H$ does not satisfy Condition (L) (as the cycle $c$ has no
exits in $E^{0}\backslash H$) and, on the other hand, the primitivity of $Q$
implies, by Theorem \ref{primeLPA}, that $E^{0}\backslash H$ \ satisfies
Condition (L), a contradiction. Thus every prime ideal of $L_{K}(E)$ is graded
and hence, by Corollary \ref{Condition K}, $E$ satisfies the Condition (K).
For any maximal tail $M$, with $H=E^{0}\backslash M$, if $I_{(H,S)}$ is a
prime ideal (so $S=B_{H}$ or $B_{H}\backslash\{u\}$), then $L_{K}%
(E)/I_{(H,S)}\cong L_{K}(E\backslash(H,S))$ is a primitive ring and so, by
Theorem \ref{Primitive LPA}, $(E\backslash(H,S))^{0}=E^{0}\backslash H=M$
satisfies the Condition (L) and the countable separation property.

Conversely, it is easy to see that the Condition (K) in $E$ implies that any
maximal tail in $E^{0}$ also satisfies the Condition (K) and hence the
Condition (L). Moreover, every prime ideal of $L_{K}(E)$ is graded. For any
graded prime ideal $I_{(H,S)}$, $(E\backslash(H,S))^{0}=E^{0}\backslash H$ is
a maximal tail that satisfies, by hypothesis, the Condition (L) and the
countable separation property. So $L_{K}(E)/I_{(H,S)}\cong L_{K}%
(E\backslash(H,S))$ is a primitive ring by Theorem \ref{Primitive LPA}. This
implies that $I_{(H,S)}$ is a primitive ideal.
\end{proof}

Here are a few graphs $E$ satisfying the conditions of Theorem
\ref{Prime=Primitive} (and thus every prime ideal of the corresponding
$L_{K}(E)$ is primitive): (a) $E$ is any countable acyclic graph; (b) $E$ is a
row-finite graph satisfying the condition (L); (c) $E$ is a graph consisting
of a straight line graph \ $\overset{v_{1}}{\bullet}\rightarrow\overset{v_{2}%
}{\bullet}\rightarrow\overset{v_{3}}{\bullet}\rightarrow\bullet\rightarrow
\bullet\cdot\cdot\cdot\cdot$ \ together with a cycle $%
\begin{array}
[c]{ccc}%
u & \overset{\leftarrow}{\rightarrow} & w
\end{array}
$ where, for each $n$, there is an edge from $u$ to $v_{n}$. Thus $u$ is a
breaking vertex for $H=\{v_{1},v_{2},.....\}$. Here $M=\{u,w\}$ is the only
maximal tail in $E$ and it satisfies condition (i) of Theorem
\ref{Prime=Primitive}.

\subsection{Examples}

We now construct various examples of prime and primitive ideals of Leavitt
path algebras illustrating the various possibilities mentioned in Theorems
\ref{Main Theorem} and \ref{Primitive Ideal Thm}.

\begin{example}
\label{Ex 4.5}Consider the graph $E$ where $E^{0}=\{v_{i},u_{i}%
:i=1,2,3,.....\}$. For each $i$, there is exactly one edge from $u_{i}$ to
$u_{i+1}$. For each $j$, EACH $v_{j}$ is an infinite emitter connected to
\ EACH $u_{i}$ by an edge. Also each $v_{j}$ connects to $v_{j+1}$ by an edge
and vice versa (to form a cycle of length 2). Thus $E$ looks like%
\[%
%TCIMACRO{\FRAME{itbpF}{3.1981in}{1.1035in}{0in}{}{}{Figure}%
%{\special{ language "Scientific Word";  type "GRAPHIC";
%maintain-aspect-ratio TRUE;  display "USEDEF";  valid_file "T";
%width 3.1981in;  height 1.1035in;  depth 0in;  original-width 3.1531in;
%original-height 1.0698in;  cropleft "0";  croptop "1";  cropright "1";
%cropbottom "0";  tempfilename 'LN92WS00.bmp';tempfile-properties "XPR";}} }%
%BeginExpansion
{\includegraphics[
natheight=1.069800in,
natwidth=3.153100in,
height=1.1035in,
width=3.1981in
]%
{LN92WS00.bmp}%
}
%EndExpansion
\]

\end{example}

It is easy to check that there are only three hereditary saturated subsets of
vertices in the graph $E$, namely, $H=\{u_{1},u_{2},.....\}$, $E^{0}$ and the
empty set. Now $B_{H}=\{v_{1,}v_{2},....\}$. For each $j=1,2,...$ , let
$S_{j}=B_{H}\backslash\{v_{j}\}$. Observe that in $E^{0}\backslash H$,
$v_{i}\geq v_{j}$ and $v_{j}\geq v_{i}$ for any two $i$ and $j$. It is then
clear that $E^{0}$, $E^{0}\backslash H$ and, for each $j$, $(E\backslash
(H,S_{j})^{0}=(E^{0}\backslash H)\cup\{v_{j}^{\prime}\}$ all satisfy the MT-3
condition. As an application of Theorem \ref{Main Theorem}, we then get, for
each $j$, the ideal $P_{j}=I_{(H,S_{j})}$, the ideal $P=I_{(H,B_{H})}$ and
$\{0\}$ are all graded prime ideals of $L_{K}(E)$. Note that $E$ satisfies the
Condition (K). Further $E^{0}$, being countable, has the countable separation
property. We thus conclude the following: (i) $\{0\}$, $P$, $P_{j}$ ($j\geq1$)
are the only prime ideals of $L_{K}(E)$; (ii) Every prime ideal of $L_{K}(E)$
is graded and (iii) All the prime ideals of $L_{K}(E)$ are primitive (by
Theorem \ref{Prime=Primitive}).

Remark: In Example \ref{Ex 4.5}, every prime ideal\ of $L_{K}(E)$ \ is graded.
A natural question is: Is there a graph $E$ such that every prime ideal in
$L_{K}(E)$ is non-graded ? The answer is in the negative. Indeed, as shown in
Lemma \ref{Lemma 3.6} and in view of Theorem \ref{Main Theorem}, if $J$ is a
non-graded prime ideal of $L_{K}(E)$ with $\ H=J\cap E^{0}$, then the ideal
$I_{(H,B_{H})}$ is always a graded prime ideal of $L_{K}(E)$.

\begin{example}
\label{Ex 4.6}Consider the graph $F$ given below.%
\[%
%TCIMACRO{\FRAME{itbpF}{3.4627in}{1.2427in}{0in}{}{}{Figure}%
%{\special{ language "Scientific Word";  type "GRAPHIC";
%maintain-aspect-ratio TRUE;  display "USEDEF";  valid_file "T";
%width 3.4627in;  height 1.2427in;  depth 0in;  original-width 3.4169in;
%original-height 1.2081in;  cropleft "0";  croptop "1";  cropright "1";
%cropbottom "0";  tempfilename 'LN92WS01.bmp';tempfile-properties "XPR";}} }%
%BeginExpansion
{\includegraphics[
natheight=1.208100in,
natwidth=3.416900in,
height=1.2427in,
width=3.4627in
]%
{LN92WS01.bmp}%
}
%EndExpansion
\]

\end{example}

Here $v$ is an infinite emitter, connected to every vertex $u_{j}$. Let $c$
denote the loop at $v$. Now $H=\{u_{1},u_{2},....\}$ is the only proper
non-empty hereditary saturated subset of $E^{0}$ with $B_{H}=\{v\}$ and $c$
has no exits in $E^{0}\backslash H$. Let $J$ be the two-sided \ ideal
generated by $H\cup\{v-cc^{\ast}\}\cup$ $\{v-c\}$. By Theorem
\ref{Main Theorem}, $J$ is a non-graded prime ideal of $L_{K}(F)$ and is thus
primitive. Note that the non-graded prime (= primitive) ideal of $L_{K}(F)$
are in one-to-one correspondence with the irreducible polynomials in
$K[x.x^{-1}]$. \ By Theorem \ref{Main Theorem}, the ideal $Q=I_{(H,B_{H}%
)}=<H,v-cc^{\ast}>$ is a graded prime ideal of $L_{K}(E)$ and is not primitive
as $E^{0}\backslash H$ does not satisfy Condition (L). Also $\{0\}$ is a
(graded) prime\ (actually primitive) ideal of $L_{K}(E)$ as $E^{0}$ satisfies
the MT-3 condition, the Condition (L) and the CSP. The poset $Spec(L_{K}(F))$
under set inclusion can be pictorially described as $%
\begin{array}
[c]{ccccccc}
&  &  &  &  &  & \\
\bullet & \bullet & \bullet & \bullet & \bullet & \bullet & \bullet\\
&  & \nwarrow & \uparrow & \nearrow &  & \\
&  &  & Q &  &  & \\
&  &  & \uparrow &  &  & \\
&  &  & \{0\} &  &  &
\end{array}
$ $\ $\ where there are infinitely many dots $\bullet$ denoting the infinitely
many non-graded prime ideals of $L_{K}(F)$.

\begin{example}
\label{Ex 4.7}Let $G$ be the graph given below.
\[%
%TCIMACRO{\FRAME{itbpF}{3.3226in}{1.9545in}{0in}{}{}{Figure}%
%{\special{ language "Scientific Word";  type "GRAPHIC";
%maintain-aspect-ratio TRUE;  display "USEDEF";  valid_file "T";
%width 3.3226in;  height 1.9545in;  depth 0in;  original-width 3.2776in;
%original-height 1.9164in;  cropleft "0";  croptop "1";  cropright "1";
%cropbottom "0";  tempfilename 'LN92WS02.bmp';tempfile-properties "XPR";}} }%
%BeginExpansion
{\includegraphics[
natheight=1.916400in,
natwidth=3.277600in,
height=1.9545in,
width=3.3226in
]%
{LN92WS02.bmp}%
}
%EndExpansion
\]

\end{example}

This graph $G$ is essentially the graph $E$ of \ref{Ex 4.5} except for an
additional vertex $w$, an edge from $v_{1}$ to $w$ and a loop at $w$ denoted
by $c$. In addition to $G^{0}$ and the empty set, there are two hereditary
saturated subsets in $G^{0}$, namely, $H_{1}=\{u_{1},u_{2},.....\}$ and
$H_{2}=\{u_{1},u_{2},.....\}\cup\{w\}$. Note that $B_{H_{1}}=B_{H_{2}}%
=\{v_{1},v_{2},....\}$. $L_{K}(G)$ has infinitely many graded prime ideals
and\ also infinitely many non-graded prime ideals. For example, $G^{0}%
\backslash H_{1}=M(w)$ satisfies the MT-3 condition and has a cycle/loop $c$
without exits based at $w$. For each irreducible polynomial $f(x)\in
K[x,x^{\_1}]$, the ideal $P_{f(x)}$ generated by $H_{1}\cup\{v^{H_{1}}:v\in
B_{H_{1}}\}\cup\{f(c)\}$ is, by \ref{Main Theorem}, a non-graded prime ideal
of $L_{K}(G)$. Since $G^{0}\backslash H_{2}=M(v_{i})$ for each $v_{i}$, the
ideal $I_{(H_{2,}S_{i})}$ is a graded prime ideal of $L_{K}(G)$ where
$S_{i}=B_{H_{2}}\backslash\{v_{i}\}$. Note that the graded ideals
$I_{(H_{2,}S_{i})}$ are all primitive. On the other hand , the graded ideal
$I_{(H_{1},B_{H_{1}})}$ is a prime ideal which is not primitive.

\section{\textbf{The} \textbf{Stratification of }$Spec(L_{K}(E)$}

In the case of an arbitrary graph $E$, the stratification of the prime
spectrum of $L_{K}(E)$ is interestingly similar, but different from the
stratification of $Spec(L_{K}(E)$ for a row-finite graph $E$ (\cite{Abrams2}).
For each cycle $c$ without K and based at a vertex $v$ in the arbitrary graph
$E$, let $M_{c}=\{w\in E^{0}:w\geq v\}$ and $H_{c}=\{w\in E^{0}:w\ngeq v\}$.
\ Note that $M_{c}$ is the smallest maximal tail containing $c$. For each
maximal tail $M$ in $E^{0}$, define the \textit{stratum corresponding to }$M$
with $E^{0}\backslash M=H$ to be
\[
Spec_{M}(L_{K}(E))=\{P\in Spec(L_{K}(E)):P\cap E^{0}=H\}.
\]
Thus the prime spectrum of the Leavitt path algebra $L_{K}(E)$ is the union of
disjoint strata corresponding to\ distinct maximal tails.

For a given cycle $c$ without K and based at a vertex $v$ with $M_{c}=\{w\in
E^{0}:w\geq v\}$, it is clear from the proof of Theorem \ref{Main Theorem}
that, the corresponding stratum $Spec_{M_{c}}(L_{K}(E))$ consists of at most
two graded prime ideals $P_{h}=I_{(H,B_{H})}$ and $I_{(H,B_{H}\backslash
\{v\})}$ (in case $v\in B_{H}$), where $H=\{w\in E^{0}:w\ngeq v\}$, together
with an infinite set of non-graded prime ideals all containing the ideal
$I_{(H,B_{H}})$ and indexed by the irreducible polynomials in $K[x,x^{-1}%
]$.\ Thus if $v\notin B_{H}$, $P_{h}$ will be the only graded prime ideal of
$L_{K}(E)$ and in this case, $Spec_{M_{c}}(L_{K}(E))$ is homeomorphic to
$Spec(K[x,x^{-1}])$ with $P_{h}$ corresponding to the ideal $\{0\}$ of
$K[x,x^{-1}]$. The poset of $Spec_{M_{c}}(L_{K}(E))$ under set inclusion will
look like%
\[%
\begin{array}
[c]{ccccccc}%
\bullet & \bullet & \bullet & \bullet & \bullet & \bullet & \bullet\\
&  & \nwarrow & \uparrow & \nearrow &  & \\
&  &  & \underset{P_{h}}{\bullet} &  &  &
\end{array}
\text{ \quad\qquad or \qquad\quad}%
\begin{array}
[c]{ccccc}%
\bullet & \bullet & \bullet & \bullet & \bullet\\
& \nwarrow & \uparrow & \nearrow & \\
&  & \underset{P_{h}}{\bullet} &  & \\
&  & \uparrow &  & \\
&  & \underset{P_{v}}{\bullet} &  &
\end{array}
\]
where the dots $\bullet$ denote infinitely many non-graded prime ideals of
$L_{K}(E)$. Observe that in the above case, the cycle $c$ has no exits in
$M_{c}$.

If $M$ is a maximal tail in which every cycle has an exit, it determines at
most two graded prime ideals $P_{u}=I_{(H,B_{H}\backslash\{u\})}$ (when there
is a $u\in B_{H}$ such that $H=\{w\in E^{0}:w\ngeq u\}$) and $P_{h}%
=I_{(H,B_{H})}$ (with $P_{h}$ being the only graded prime ideal, if
$B_{H}=\Phi$ or if $E^{0}\backslash H\neq M(u)$ for any $u\in B_{H}$). Note
that $P_{u}\subset P_{h}$ and that $P_{u}$ will always be primitive by Theorem
\ref{Primitive Ideal Thm}. Thus the\ poset of $Spec_{M}(L_{K}(E))$ under set
inclusion will look like $%
\begin{array}
[c]{cc}%
\overset{P_{h}}{\bullet} & \\
\uparrow & \\
\underset{P_{u}}{\bullet} &
\end{array}
$ or\quad\ $\overset{P_{h}}{\bullet}$.

\section{Leavitt path algebras with prescribed Krull Dimension}

Recall that a ring $R$ is said to have \textbf{Krull dimension }$n$, if $n$ is
the supremum of all non-negative integers $k$\ such that there is a chain of
prime ideals $P_{0}\subsetneqq\cdot\cdot\cdot\subsetneqq P_{k}$. We can extend
this definition to infinite ordinals $\lambda$. Thus $R$ is said to have Krull
dimension $\lambda$, if $\lambda$ is the supremum of the order types of all
the continuous well-ordered of ascending chains of prime ideals in $R$.

In this section, we characterize the Leavitt path algebras with Krull
dimension $0$. We also construct examples of Leavitt path algebras with
various prescribed Krull dimensions (both finite and infinite).

We begin by describing the graphical conditions on $E$ under which every
non-zero prime ideal of $L_{K}(E)$ is maximal.

Recall that every non-empty subset $X$ of vertices in a graph $E$ gives rise
to the restricted subgraph $E_{X}$ where $(E_{X})^{0}=X$ and $(E_{X}%
)^{1}=\{e\in E^{1}:s(e),r(e)\in X\}$.

\begin{theorem}
\label{Prime=maximal}Let $E$ be an arbitrary graph and $K$ be any field. Then
every non-zero prime ideal of the Leavitt path algebra $L_{K}(E)$ is maximal
if and only if $E$ satisfies one of the following two conditions:

Condition \ I: (i) $E^{0}$ is a maximal tail; (ii) The only hereditary
saturated subsets of $E^{0}$ are $E^{0}$ and $\Phi$ (the empty set); (iii) $E$
does not satisfy the Condition (K).

Condition II: (a) $E$ satisfies the Condition (K); (b) For each maximal tail
$M$, the restricted graph $E_{M}$ contains no proper non-empty hereditary
saturated subsets; (c) If $H$ is a hereditary saturated subset of $E^{0}$,
then for each $u\in B_{H}$, $M(u)\subsetneqq E^{0}\backslash H$.
\end{theorem}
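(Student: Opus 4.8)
The plan is to prove the equivalence by working through the characterization of prime ideals given in Theorem \ref{Main Theorem}. The key observation is that ``every non-zero prime ideal is maximal'' means the prime spectrum (minus $\{0\}$, if $\{0\}$ is prime) is an antichain: no two distinct non-zero prime ideals are comparable. I would use the stratification of $Spec(L_K(E))$ by maximal tails described in Section 5, together with the internal structure of each stratum. Within each stratum $Spec_M(L_K(E))$ (for $M = E^0\setminus H$), the prime ideals consist of the graded primes $I_{(H,B_H)}$ (and possibly $I_{(H,B_H\setminus\{u\})}$) together with infinitely many non-graded primes $\langle I_{(H,B_H)}, f(c)\rangle$ lying strictly above $I_{(H,B_H)}$ when $H=H_c$ for some cycle $c$ without K. Comparability can occur either \emph{within} a stratum (graded prime below a non-graded prime, or $P_u \subsetneqq P_h$) or \emph{across} strata (via $H_1 \subsetneqq H_2$, i.e. one maximal tail contained in another).

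**Eliminating comparabilities.** First I would argue that ``prime $=$ maximal among non-zero primes'' forces, within each stratum, that there are no non-graded primes strictly above a graded prime. By Theorem \ref{Main Theorem}(iii), a non-graded prime $\langle I_{(H,B_H)}, f(c)\rangle$ always properly contains the graded prime $I_{(H,B_H)}$ (by Lemma \ref{Lemma 3.6}). So if any cycle without K exists, we obtain a properly ascending chain of non-zero primes, \emph{unless} $I_{(H,B_H)}=\{0\}$, i.e.\ $H=\emptyset$ and $B_H=\emptyset$. This is exactly the dichotomy in the statement: either $E$ fails Condition (K) but in such a degenerate way that the only such chain sits above $\{0\}$ (Condition I), or $E$ satisfies Condition (K) so that no non-graded primes exist at all (Condition II, via Corollary \ref{Condition K}). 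I would then separately rule out the intra-stratum comparability $P_u=I_{(H,B_H\setminus\{u\})} \subsetneqq P_h=I_{(H,B_H)}$: this is precisely what clause (c) of Condition II forbids, since $P_u$ exists exactly when some $u\in B_H$ has $M(u)=E^0\setminus H$.

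**Cross-stratum comparabilities and the two conditions.** Next I would handle comparabilities \emph{across} strata, which correspond to strict inclusions $H_1 \subsetneqq H_2$ of the hereditary saturated sets $H_i = P_i \cap E^0$. Here the restricted-subgraph clause (b) of Condition II and the ``only hereditary saturated subsets are $E^0,\Phi$'' clause (ii) of Condition I enter: the point is that a proper non-empty hereditary saturated subset of $E_M$ would produce a graded prime strictly between two graded primes in comparable strata. For Condition I, the idea is that $E$ has exactly one maximal tail $E^0$ (clause (i)) and no proper graded ideals (clause (ii)), so the \emph{only} graded prime is $\{0\}$, and the failure of Condition (K) (clause (iii)) produces the infinitely many non-graded maximal primes all covering $\{0\}$; these are pairwise incomparable by the Remark following Proposition \ref{Simplicity Thm}. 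For Condition II, Condition (K) kills all non-graded primes, so every prime is graded $I_{(H,S)}$, and clauses (b),(c) ensure the graded primes form an antichain above $\{0\}$.

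**The main obstacle.** The hardest part, I expect, is the \emph{sufficiency} direction under Condition II: given a strict inclusion $I_{(H_1,S_1)}\subsetneqq I_{(H_2,S_2)}$ of \emph{non-zero} graded primes, I must derive a contradiction with clauses (b) and (c). If $H_1=H_2=H$ this is clause (c) (forcing $S_1=S_2=B_H$); if $H_1\subsetneqq H_2$, I would pass to the quotient $L_K(E)/I_{(H_1,S_1)}\cong L_K(E\setminus(H_1,S_1))$ and reinterpret the image of $I_{(H_2,S_2)}$ as arising from a non-empty hereditary saturated subset of the maximal tail $M_1=E^0\setminus H_1$; translating this back to a proper non-empty hereditary saturated subset of the restricted graph $E_{M_1}$ contradicts clause (b). Making this translation precise --- reconciling saturation in the full graph, in the quotient graph, and in the restricted subgraph $E_{M_1}$, and correctly accounting for breaking vertices --- is the delicate bookkeeping that the proof will have to carry out carefully.
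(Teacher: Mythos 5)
Your overall skeleton (classify primes via Theorem \ref{Main Theorem}, split on Condition (K) via Corollary \ref{Condition K}, use clause II(c) to kill the inclusion $I_{(H,B_H\setminus\{u\})}\subsetneqq I_{(H,B_H)}$, and translate clause II(b) into the absence of inclusions among graded primes) runs parallel to the paper's proof, and the ``delicate bookkeeping'' you flag for cross-stratum inclusions is indeed where the paper does real work. But there is a genuine gap, and it sits in the very first step you call a key observation: for $L_K(E)$ over an arbitrary graph, ``every non-zero prime ideal is maximal'' is \emph{not} the same statement as ``the non-zero prime ideals form an antichain.'' Maximality of $P$ means maximality among \emph{all} proper ideals, not just among primes. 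In a unital ring the two coincide, because a proper ideal strictly above $P$ embeds by Zorn's Lemma in a maximal (hence prime) ideal; but $L_K(E)$ is non-unital when $E^0$ is infinite, and the paper stresses exactly this failure in the discussion preceding Theorem \ref{KrullDim0}, exhibiting a Leavitt path algebra with no maximal ideals at all. Consequently your sufficiency arguments --- which only establish pairwise incomparability of non-zero primes (under Condition I via the Remark after Proposition \ref{Simplicity Thm}, under Condition II via an antichain of graded primes) --- do not by themselves show that any of these primes is a maximal ideal: a priori there could be a non-prime ideal strictly between $P$ and $L_K(E)$ with no prime ideal above it.

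The gap is fixable, but an argument must be supplied. One route: show that every proper ideal $J$ of $L_K(E)$ omits some vertex $u$ (an ideal containing all vertices contains every $\alpha\beta^{\ast}=\alpha r(\alpha)\beta^{\ast}$, hence equals $L_K(E)$), then take by Zorn's Lemma an ideal $Q\supseteq J$ maximal with respect to $u\notin Q$; since $u$ is an idempotent, $Q$ is prime, and so ``no prime strictly above $P$'' really does yield ``$P$ is maximal.'' The paper instead avoids the issue entirely by proving maximality directly against arbitrary ideals: under Condition II it shows the quotient $L_K(E)/I_{(H,B_H)}\cong L_K(E\backslash(H,B_H))=L_K(E_M)$ is a \emph{simple} ring (Theorem 3.11 of \cite{Abrams1}, using Condition (K) and clause II(b)); under Condition I it shows via Lemma \ref{Lemma 3.4} that every non-zero ideal has the form $<g(c)>$, and that $J\nsubseteq P=<p(c)>$ forces $J+P=L_K(E)$ by comaximality of $g(x)$ and $p(x)$ in $K[x,x^{-1}]\cong vL_K(E)v$ together with clause I(ii). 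You need one of these two devices; the antichain property alone is not enough, and the same caveat applies to your necessity argument for clause I(ii), where the paper uses maximality of $P$ against the (possibly non-prime) graded ideal $<X>$ generated by a hereditary saturated set $X$.
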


\begin{proof}
Suppose every non-zero prime ideal of $L_{K}(E)$ is a maximal ideal. We
distinguish two cases.

Case 1: Suppose there exists a non-graded prime ideal $P$ in $L_{K}(E)$. We
shall show that $E$ satisfies Condition I. From Theorem \ref{Main Theorem},
$P=<I_{(H,B_{H})},f(c)>$, where $H=P\cap E^{0}$ and $c$ is a cycle without K
based at a vertex $v$ in $E$ and that $E^{0}\backslash H=M(v)$. Since, by
Lemma \ref{Lemma 3.6}, $P$ contains the graded prime ideal $I_{(H,B_{H})}$
which, if non-zero, must be a maximal ideal, we conclude that $H=\Phi$. So
$P=<f(c)>$ and contains no vertices. Now $E^{0}\backslash H=E^{0}$ and $w\geq
v$ for every $w\in E^{0}$. The last property implies (as $c$ is a cycle
without K) that $c$ has no exits in $E^{0}$ (thus, in particular, $E$ does not
satisfy the Condition (K) ) and that $E^{0}$ is a maximal tail.\ It also
implies that any non-empty hereditary saturated subset $X$ of $E^{0}$ contains
$c^{0}$. So if $Q=<X>$, then $Q$ contains $c$ and hence properly contains
$<f(c)>=P$. By maximality of $P$, $Q=L_{K}(E)$ and so $X=E^{0}$. \ Thus,
$\Phi$ and $E^{0}$ are the only hereditary saturated subsets of $E^{0}$. ( In
particular, all the non-zero ideals of $L_{K}(E)$ are non-graded). This proves
that $E$ satisfies Condition I. (Note also that, in this case, $L_{K}(E)$ is a
prime ring, by Theorem \ref{primeLPA}) \ 

Case 2: Suppose every prime ideal of $L_{K}(E)$ is graded. So, by Corollary
\ref{Condition K}, the graph $E$ satisfies the Condition (K).\ We wish to
establish condition II (b),(c) for $E$. Now condition II(c) must hold, since
otherwise there will be a hereditary saturated subset $H$ of $E^{0}$and a
$u\in B_{H}$ such that $M(u)=E^{0}\backslash H$. Then, by Theorem
\ref{Main Theorem}, the ideal $I_{(H,B_{H}\backslash\{u\})}$ will be a graded
prime ideal and, since it is properly contained in $I_{(H,B_{H})}$, we get a
contradiction to the maximality of $I_{(H,B_{H}\backslash\{u\})}$. To prove
condition II(b), let $M$\ be any maximal tail with $E^{0}\backslash M=H$. By
Theorem \ref{Main Theorem}(i), $I_{(H,B_{H})}$ is a prime ideal and hence by
supposition, a maximal ideal of $L_{K}(E)$. Then $L_{K}(E\backslash
(H,B_{H}))\cong L_{K}(E)/I_{(H,B_{H})}$ is a simple ring and so, by Theorem
3.11 of \cite{Abrams1}, $M=E^{0}\backslash H=(E\backslash(H,S))^{0}$ contains
no proper non-empty hereditary saturated subsets. Thus $E$ satisfies Condition II.

Conversely, suppose $E$ satisfies condition I of the Theorem. Since Condition
(K) does not hold, $L_{K}(E)$ cannot be a simple ring by Lemma 4.1 of
\cite{AA-2} and so there are non-zero ideals in\ $L_{K}(E)$. By hypothesis
I(ii) all the non-zero ideals of $L_{K}(E)$ are non-graded. Moreover,
hypotheses I(i), (ii), together with Lemma \ref{Lemma 3.4}, imply that there
exists a cycle $c$ without exits in $E^{0}$ and that every non-zero ideal $J$
of $L_{K}(E)$ is of the form $<g(c)>$ for some polynomial in $g(x)\in
K[x]\subset$ $K[x,x^{-1}]$. Let $P$ be a non-zero prime ideal of $L_{K}(E)$.
Since $P$ is non-graded, by Theorem 3.10, $P=<p(c)>$ where $p(x)$ is an
irreducible polynomial in $K[x,x^{-1}]$. If $J=<g(c)>$ is an ideal with
$J\nsubseteqq P$,\ then $g(x)\notin<p(x)>$ and by maximality,
$<g(x)>+<p(x)>=K[x,x^{-1}]$. Since $vL_{K}(E)v\cong K[x,x^{-1}]$, we conclude
that $v\in vJv+vPv\subset J+P$. Thus $(J+P)\cap E^{0}\neq\Phi$ and so, by
hypothesis I(ii), $E^{0}\subset J+P$. Hence $J+P=L_{K}(E)$. This proves that
$P$ is a maximal ideal of $L_{K}(E)$.

Suppose now $E$ satisfies the Condition II. Now the Condition (K) implies that
every ideal of $L_{K}(E)$ is graded. By Condition II(c) and Theorem
\ref{Main Theorem} (i), (ii), every non-zero prime ideal $P$ of $L_{K}(E)$ is
of the form $I_{(H,B_{H})}$ where $H=P\cap E^{0}$. Let $M=E^{0}\backslash H$.
Now the Condition (K) in $E$ implies that $E\backslash(H,B_{H})$ satisfies the
Condition (K) and, since $(E\backslash(H,B_{H}))^{0}=E^{0}\backslash H=M$, our
hypothesis implies that $E\backslash(H,B_{H})=E_{M}$ contains no non-empty
proper hereditary saturated subsets of vertices and so, by Theorem 3.11 of
\cite{Abrams1}, $L_{K}(E)/I_{(H,B_{H})}\cong L_{K}(E\backslash(H,B_{H}))$ is a
simple ring, thus proving that $I_{(H,B_{H})}$ is a maximal ideal of
$L_{K}(E)$.
\end{proof}

When every non-zero prime ideal of $L_{K}(E)$ is maximal and $L_{K}(E)$
contains a non-graded prime ideal, then \ our proof of \ Case I in Theorem
\ref{Prime=maximal} shows that there is a unique cycle $c$ based at a vertex
$v$ and that $M(v)=E^{0}$. Gene Abrams points out that if further, $E$ is a
finite graph, then in this case, $L_{K}(E)\cong M_{n}(K[x,x^{-1}])$ where $n$
is the number of paths in $E$ which end in $c$ but do not contain $c$. This
was shown in Theorem 3.3 of ( \cite{Abrams6}). On the other hand, since
$K[x,x^{-1}]$ is a Euclidean domain, every non-zero prime ideal of
$K[x,x^{-1}]$ and hence also of $M_{n}(K[x,x^{-1}])$ is maximal. Thus we get
the following corollary.

\begin{corollary}
\label{Finite case Prime=maximal}Let $E$ be a finite graph. Then every
non-zero prime ideal of $L_{K}(E)$ is maximal if and only if \ either
$L_{K}(E)\cong M_{n}(K[x,x^{-1}])$ for some positive integer $n$ or $E$
satisfies the Condition (K) and for each maximal tail $M$, the restricted
graph $E_{M}$ contains no proper non-empty hereditary saturated subsets of vertices.
\end{corollary}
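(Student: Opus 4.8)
The plan is to specialize Theorem \ref{Prime=maximal} to a finite graph and then translate its two alternative conditions into the two alternatives of the present statement. The key observation I would record at the outset is that a finite graph has no infinite emitters, so $B_H = \Phi$ for every hereditary saturated set $H$. Consequently Condition II(c) of Theorem \ref{Prime=maximal} is vacuously satisfied, and Condition II collapses to exactly parts (a) and (b): that $E$ satisfies Condition (K) and that each restricted graph $E_M$ (for $M$ a maximal tail) has no proper non-empty hereditary saturated subset. This is precisely the second alternative of the Corollary, so that alternative matches Condition II with no extra argument.

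For the forward implication, I would invoke Theorem \ref{Prime=maximal}: if every non-zero prime ideal of $L_K(E)$ is maximal, then $E$ satisfies Condition I or Condition II. Under Condition II the reduction above already yields the second alternative. Under Condition I, the Condition (K) fails, so $L_K(E)$ carries a non-graded prime ideal; as recorded in the discussion immediately preceding this Corollary, the analysis of Case 1 in Theorem \ref{Prime=maximal} then forces a unique cycle $c$ based at some vertex $v$ with $M(v)=E^0$, and Theorem 3.3 of \cite{Abrams6} identifies $L_K(E)\cong M_n(K[x,x^{-1}])$, where $n$ is the number of paths ending in $c$ but not containing $c$. This delivers the first alternative.

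For the converse I would treat the two alternatives separately. If $E$ satisfies Condition (K) together with the restricted-graph condition, then (again since $B_H=\Phi$) $E$ satisfies Condition II of Theorem \ref{Prime=maximal}, and that theorem directly gives that every non-zero prime ideal is maximal. If instead $L_K(E)\cong M_n(K[x,x^{-1}])$, I would use that the two-sided ideals of $M_n(R)$ correspond bijectively to those of $R$, preserving both primeness and maximality; since $K[x,x^{-1}]$ is a Euclidean domain in which every non-zero prime ideal is maximal, it follows at once that every non-zero prime ideal of $M_n(K[x,x^{-1}])$, and hence of $L_K(E)$, is maximal.

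I expect the only non-routine step to be the identification, in the finite case, of Condition I with the matrix-ring isomorphism. This does not reduce to a direct computation but leans on the external structural result of \cite{Abrams6}; once that is granted, the ideal theory of matrix rings and the Euclidean property of $K[x,x^{-1}]$ make the remaining steps immediate.
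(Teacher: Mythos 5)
Your proposal is correct and follows essentially the same route as the paper: the paper's justification is precisely the discussion paragraph preceding the corollary, which invokes the Case 1 analysis of Theorem \ref{Prime=maximal} (a non-graded prime ideal forces a unique cycle $c$ with $M(v)=E^{0}$), cites Theorem 3.3 of \cite{Abrams6} for the isomorphism $L_{K}(E)\cong M_{n}(K[x,x^{-1}])$, and uses the Euclidean property of $K[x,x^{-1}]$ for the converse, while the reduction of Condition II to the second alternative via $B_{H}=\Phi$ for finite graphs is implicit in the paper and made explicit (correctly) by you.
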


Next we shall use Theorem \ref{Prime=maximal} and Proposition
\ref{Simplicity Thm} to describe those Leavitt path algebras whose Krull
dimension is zero.

Recall that a subset $S$ of a partially ordered set $(X,\leq)$ is called an
\textbf{antichain }if for any two $a,b\in S$, we have $a\nleqslant b$ and
$b\nleqslant a$.

We begin with some preliminary observations. Clearly, a maximal ideal $M$ of a
ring $R$ with multiplicative identity $1$ is a prime ideal of $R$. If $R$ is a
ring without identity but $R^{2}=R$ (such as a Leavitt path algebra), then
also a maximal ideal $M$ of $R$ is prime. This is because, if $A$, $B$ are two
ideals of $R$ such that $A\nsubseteq M$ and $B\nsubseteq M$ so that $A+M=R$
and $B+M=R$, then $AB\nsubseteq M$ since otherwise $R=R^{2}%
=(A+M)(B+M)=AB+AM+MB+MM\subset M$, a contradiction.

If $R$ is a ring with identity, then, by Zorn's Lemma, every (prime) ideal of
$R$ embeds in a maximal ideal and so the Krull dimension of $R$ is $0$ if and
only if every prime ideal of $R$ is maximal. But this no longer is true in a
Leavitt path algebra $L_{K}(E)$ if $E$ is an arbitrary graph. Because, as the
following example shows, maximal ideals need not exist in $L_{K}(E)$. (In
contrast, it is worth noting that maximal one sided ideals always exist in any
$L_{K}(E)$: For a vertex $v$ in $E$, apply Zorn's Lemma pick a maximal left
$L_{K}(E)$-submodule $M$ of $L_{K}(E)v$. Then $M\oplus C$ is a desired maximal
left ideal of $L_{K}(E)$, where $L_{K}(E)=L_{K}(E)v\oplus C$).

\textbf{Example}: Let $E$ be the graph consisting of an infinite line segement
in which there are two loops at each vertex $v_{i}$ and, for each $i$, there
is an edge from vertex $v_{i+1}$ to vertex $v_{i}$.Thus $E$ looks like:%

%TCIMACRO{\FRAME{fhF}{3.7005in}{0.8934in}{0pt}{}{}{Figure}%
%{\special{ language "Scientific Word";  type "GRAPHIC";
%maintain-aspect-ratio TRUE;  display "USEDEF";  valid_file "T";
%width 3.7005in;  height 0.8934in;  depth 0pt;  original-width 3.653in;
%original-height 0.8614in;  cropleft "0";  croptop "1";  cropright "1";
%cropbottom "0";  tempfilename 'LN92WS03.bmp';tempfile-properties "XPR";}} }%
%BeginExpansion
\begin{figure}
[h]
\begin{center}
\includegraphics[
natheight=0.861400in,
natwidth=3.653000in,
height=0.8934in,
width=3.7005in
]%
{LN92WS03.bmp}%
\end{center}
\end{figure}
%EndExpansion
Since $E$ satisfies the Condition (K), every ideal in $L_{K}(E)$ is graded and
is generated by vertices (since $E$ is row-finite) \ Now the proper non-empty
hereditary saturated subsets of $E^{0}$ ae precisely the sets $H_{n}%
=\{v_{1},...,v_{n}\}$ for various positive integers $n$. So the non-zero
ideals of $L_{K}(E)$ are of the form $<H_{n}>$, all of which are actually
prime ideals of $L_{K}(E)$ (since $E^{0}\backslash H_{n}$ is a maximal tail
for each $n$). These ideals form an ascending chain $\{0\}\subset
H_{1}>\subset...\subset<H_{n}>\subset....$ . Obviously $L_{K}(E)$ has no
maximal ideals.

\begin{theorem}
\label{KrullDim0}Let $E$ be an arbitrary graph and $K$ be any field. Then the
Leavitt path algebra $L_{K}(E)$ has Krull dimension $0$ if and only if\ $E$
satisfies the Condition (K) and EITHER $E^{0}$ is a maximal tail and contains
no non-empty proper maximal tails\ OR the maximal tails in $E^{0}$ are proper
subsets of $E^{0}$ and form an antichain under set inclusion and\ no maximal
tail $M$ is of the form $M=M(u)$ for any $u\in B_{H}$ where $H=E^{0}\backslash
H$.
\end{theorem}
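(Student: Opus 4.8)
The plan is to recast ``Krull dimension $0$'' as the assertion that $\mathrm{Spec}(L_{K}(E))$ is a non-empty antichain, and then to read off the graphical conditions from the classification in Theorem \ref{Main Theorem}. Prime ideals always exist (an ideal maximal with respect to excluding a fixed non-zero vertex is prime), so the Krull dimension is $0$ exactly when no two distinct prime ideals are comparable. The necessity of Condition (K) is then immediate: if (K) fails, then by Corollary \ref{Condition K} there is a non-graded prime $P=\langle I_{(H,B_{H})},f(c)\rangle$, and by Lemma \ref{Lemma 3.6} the graded prime $I_{(H,B_{H})}$ is strictly contained in $P$, giving a chain of length $1$ and hence Krull dimension at least $1$. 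So I may assume (K) throughout, whereupon every prime ideal is graded.

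Granting (K), Theorem \ref{Main Theorem}(i),(ii) describes every prime: it is either $I_{(H,B_{H})}$ for a maximal tail $M=E^{0}\setminus H$ (type (i)), or $I_{(H,B_{H}\setminus\{u\})}$ for some $u\in B_{H}$ with $E^{0}\setminus H=M(u)$ (type (ii)). The within-stratum comparison is the easy half: whenever a type (ii) prime exists it is properly contained in the type (i) prime $I_{(H,B_{H})}$ of the same stratum (the latter contains $u^{H}$, the former does not), so it manufactures a chain. Thus an antichain of primes forces the absence of type (ii) primes, that is, no maximal tail $M$ may equal $M(u)$ for any $u\in B_{E^{0}\setminus M}$; conversely, excluding type (ii) primes leaves exactly one prime $\pi(M):=I_{(E^{0}\setminus M,\,B_{E^{0}\setminus M})}$ in each stratum, and since $\pi(M)\cap E^{0}=E^{0}\setminus M$ recovers $M$, the map $M\mapsto\pi(M)$ is a bijection from maximal tails onto $\mathrm{Spec}(L_{K}(E))$.

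One half of the cross-stratum comparison is free and yields the sufficiency direction at once. Because $\pi(M)\cap E^{0}=E^{0}\setminus M$, any containment $\pi(M)\subseteq\pi(M')$ forces $E^{0}\setminus M\subseteq E^{0}\setminus M'$, i.e.\ $M'\subseteq M$. Hence if the maximal tails form an antichain (the second disjunct together with the $M(u)$-exclusion), no two of the sets $E^{0}\setminus M$ are comparable, so no two primes are comparable and the Krull dimension is $0$; and if $E^{0}$ is itself a maximal tail with no proper non-empty maximal tails (the first disjunct), then $\mathrm{Spec}(L_{K}(E))$ is a single point and the dimension is again $0$. The bifurcation into two cases is forced by the observation that when $E^{0}$ is a maximal tail it is the largest one, so its prime lies below, or incomparable to, every other prime; to obtain an antichain in that situation $E^{0}$ must be the only maximal tail.

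The step I expect to be the genuine obstacle is the reverse implication required for necessity: if two maximal tails are comparable, $M'\subsetneq M$, I must produce an actual inclusion $\pi(M')\subseteq\pi(M)$ of primes. The subtlety is caused entirely by breaking vertices. Using the identity $v^{E^{0}\setminus M'}=v-\sum_{s(e)=v,\,r(e)\in M'}ee^{\ast}$, one checks that each generator $v^{E^{0}\setminus M'}$ of $\pi(M')$ lies in $\pi(M)$ without trouble when $v\in E^{0}\setminus M$ or when $v$ still emits an edge into $M$; the delicate configuration is an infinite emitter $v$ lying in $M$ all of whose edges leave $M$, for then $v\notin B_{E^{0}\setminus M}$ and the generator need not survive. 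Reconciling this configuration with the standing hypotheses — carefully comparing $B_{E^{0}\setminus M}$ with $B_{E^{0}\setminus M'}$ and tracking which breaking-vertex generators survive in the quotient $L_{K}(E)/\pi(M)\cong L_{K}(E\setminus(E^{0}\setminus M,\,B_{E^{0}\setminus M}))$ — is the heart of the proof, and is exactly where the antichain hypothesis on maximal tails must be shown equivalent to the antichain hypothesis on primes.
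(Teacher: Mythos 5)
Your outline coincides with the paper's own proof at every step except the one you explicitly postpone, and that postponed step is where the proof lives or dies. The necessity of Condition (K) (a non-graded prime properly contains the graded prime $I_{(H,B_H)}$ by Lemma \ref{Lemma 3.6}), the elimination of the primes $I_{(H,B_H\setminus\{u\})}$ via the within-stratum inclusion $I_{(H,B_H\setminus\{u\})}\subsetneqq I_{(H,B_H)}$, and the sufficiency direction (intersect a chain of primes with $E^0$ to get a chain of maximal tails) are all argued exactly as in the paper. What you leave unproved is the remaining half of necessity: that comparable maximal tails $M'\subsetneq M$ force comparable primes. Announcing this as "the heart of the proof" and describing the difficulty is not an argument, so as written the proposal has a genuine gap. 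There is also a direction slip: since $M'\subsetneq M$ means $E^0\setminus M\subsetneq E^0\setminus M'$, the only inclusion one could hope for is $\pi(M)\subseteq\pi(M')$; your stated target $\pi(M')\subseteq\pi(M)$ is impossible already on vertices, because $\pi(M')$ contains every vertex of $M\setminus M'$ while $\pi(M)\cap E^0=E^0\setminus M$.

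The deeper point is that the configuration you flagged is fatal, not merely delicate, so the gap cannot be closed as stated. The paper disposes of this step with the single unjustified assertion that $M_1\subseteq M_2$ yields $I_{(H_2,B_{H_2})}\subsetneqq I_{(H_1,B_{H_1})}$, and that assertion is false. Let $E$ have vertices $v,z,w_1,w_2,\dots$ and one edge $v\to z$ and one edge $v\to w_i$ for each $i$, with no other edges; so $v$ is an infinite emitter and all other vertices are sinks. Then $M'=\{v\}$ and $M=\{v,z\}$ are comparable maximal tails and Condition (K) holds vacuously, yet $\pi(M')=I_{(\{z,w_1,w_2,\dots\},\emptyset)}$ and $\pi(M)=I_{(\{w_1,w_2,\dots\},\{v\})}$ are incomparable: $z\in\pi(M')\setminus\pi(M)$, while the generator $v^{E^0\setminus M}=v-ee^{\ast}$ (with $e$ the edge $v\to z$) maps to $v\neq 0$ under $L_K(E)/\pi(M')\cong K$, since every edge of $E$ has range in $E^0\setminus M'$ and hence dies in that quotient. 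Here $v\in B_{E^0\setminus M}$ but $v\notin B_{E^0\setminus M'}$ — exactly your "all edges leave the tail" emitter. In fact the whole prime spectrum of this $L_K(E)$ is the antichain consisting of $\pi(\{v\})$, $\pi(\{v,z\})$ and the $\pi(\{v,w_i\})$, so $L_K(E)$ has Krull dimension $0$ while its maximal tails do not form an antichain; that is, the "only if" direction of Theorem \ref{KrullDim0} itself fails on this graph. So your instinct about where the obstacle sits is exactly right, but no argument can complete the proposal in this generality: one must either restrict to graphs without such emitters (e.g. row-finite graphs, where the needed inclusion $\langle H_2\rangle\subseteq\langle H_1\rangle$ is immediate) or reformulate the antichain hypothesis directly in terms of the ideals $I_{(H,B_H)}$ rather than the maximal tails.
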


\begin{proof}
Suppose $L_{K}(E)$ has Krull dimension $0$. This means that if $P$ is a prime
ideal of $L_{K}(E)$, then there cannot be another prime ideal $Q$ with
$P\subsetneqq Q\subsetneqq L_{K}(E)$. Suppose $\{0\}$ is a prime ideal of
$L_{K}(E)$. Then, by Theorem \ref{primeLPA}, $E^{0}$ is a maximal tail and
$L_{K}(E)$ has no other prime ideals. So there are no non-empty proper maximal
tails. Since, in particular, there are no non-graded prime ideals in
$L_{K}(E)$, Corollary \ref{Condition K} implies that $E$ satisfies the
Condition (K).

Suppose now that $\{0\}$ is not a prime ideal of $L_{K}(E)$. So $E^{0}$ is not
a maximal tail. We claim that there cannot be a non-graded prime ideal $P$ in
$L_{K}(E)$. Because, by Theorem \ref{Main Theorem}, such a $P$ will be of the
form $P=<I_{(H,B_{H})},f(c)>\supsetneqq I_{(H,B_{H})}$ which will also (even
if it is $0$ ) be a prime ideal by Lemma \ref{Lemma 3.6}, thus contradicting
the fact that $L_{K}(E)$ has Krull dimension $0$. Thus every prime ideal of
$L_{K}(E)$ must be graded and so, again by Corollary \ref{Condition K}, $E$
satisfies the Condition (K)$.$ Also if $M_{1}$, $M_{2}$ are two distinct
maximal tails in $E^{0}$, they must be proper subsets of $E^{0}$ and if one is
contained in the other, say, $M_{1}\subseteq M_{2}$ with $H_{1}=E^{0}%
\backslash M_{1}$ and $H_{2}=E^{0}\backslash M_{2}$, then we get the prime
ideals $I_{(H_{2},B_{H_{2}})}\subsetneqq I_{(H_{1}B_{H_{1})}}\neq L_{K}(E)$,
contradicting the fact that $L_{K}(E)$ has Krull dimension $0$. Thus the
maximal tails in $E^{0}$ form an antichain under set inclusion. If a maximal
tail $M=M(u)$ for some $u\in B_{H}$ where $H=E^{0}\backslash M$, then, by
Theorem \ref{Main Theorem}, we get the prime ideals $I_{(H,B_{H})}\supsetneqq
I_{(H,B_{H}\backslash\{u\}}$, again contradicting that $L_{K}(E)$ has Krull
dimension $0$. This proves the necessity .

Conversely, if $E^{0}$ is a maximal tail, then, by Theorem \ref{primeLPA},
$\{0\}$ is a prime ideal of $L_{K}(E)$ and if $E^{0}$contains no non-empty
proper maximal tails, then $\{0\}$ will be the only prime ideal of $L_{K}(E)$
and in this case $L_{K}(E)$ will clearly have Krull dimension $0$. Next
suppose $E$ satisfies the three stated conditions. Now Condition (K) and
Theorem \ref{Main Theorem} implies that every prime ideal in $L_{K}(E)$ must
be of the form $I_{(H,S)}$ with $E^{0}\backslash H=M$ a maximal tail, where
$S=B_{H}$ or $B_{H}\backslash\{u\}$ for some $u\in B_{H}$. Note the latter
case is not possible, since, in that case, $M=M(u)$, contradicting the
hypothesis. Thus all the prime ideals of $L_{K}(E)$ are non-zero and must be
of the form $I_{(H,B_{H})}$. Let $P_{1}=I_{(H_{1},B_{H_{1}})}$ be a prime
ideal of $L_{K}(E)$. If there is another prime ideal $Q=I_{(H_{2},B_{H_{2}})}$
such that $P\subsetneqq Q$, then we get maximal tails $E^{0}\backslash
H_{2}\subsetneqq E^{0}\backslash H_{1}\neq E^{0}$, contradicting the fact that
the maximal tails in $E^{0}$ form an antichain under set inclusion. Thus
$L_{K}(E)$ has Krull dimension $0$.
\end{proof}

When $E$ is a finite graph, Theorem \ref{KrullDim0} reduces to the following corollary.

\begin{corollary}
\label{Finite Krull Dim 0}Let $E$ be a finite graph. Then the Leavitt path
algebra $L_{K}(E)$ has Krull dimension zero if and only if either $L_{K}(E)$
is a prime simple ring or $E^{0}$ satisfies the Condition (K), every maximal
tail $M$ in $E^{0}$ satisfies $M\neq E^{0}$and $E_{M}$ has no non-empty proper
hereditary saturated subset of vertices.
\end{corollary}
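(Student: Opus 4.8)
The plan is to derive Corollary \ref{Finite Krull Dim 0} as the specialization of Theorem \ref{KrullDim0} to finite graphs, showing that the two branches of Theorem \ref{KrullDim0}'s characterization collapse into the two branches stated here. The whole task amounts to reconciling the language: Theorem \ref{KrullDim0} speaks of ``$E^{0}$ is a maximal tail containing no non-empty proper maximal tails'' and of ``maximal tails forming an antichain with no $M=M(u)$'', whereas the corollary speaks of ``$L_{K}(E)$ is a prime simple ring'' and of ``$E_{M}$ having no non-empty proper hereditary saturated subsets''. So first I would invoke Theorem \ref{KrullDim0} directly: $L_{K}(E)$ has Krull dimension $0$ if and only if $E$ satisfies Condition (K) and either (A) $E^{0}$ is a maximal tail with no non-empty proper maximal tails, or (B) the maximal tails are proper and form an antichain with no $M=M(u)$ for $u\in B_{H}$.

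For the first branch, the plan is to show that when $E$ is finite and $E^{0}$ is a maximal tail containing no non-empty proper maximal tail, $L_{K}(E)$ is in fact a prime simple ring. Since $E^{0}$ is a maximal tail it satisfies MT-3, so $L_{K}(E)$ is prime by Theorem \ref{primeLPA}. For simplicity I would use Proposition \ref{Simplicity Thm}: I must check that the only hereditary saturated subsets are $\Phi$ and $E^{0}$, and that every cycle has an exit. The key reduction is that for a finite graph, a non-empty proper hereditary saturated set $H$ with MT-3 holding on $E^{0}$ would give a proper non-empty maximal tail inside $E^{0}$ (namely one sitting inside $E^{0}\setminus H$, using finiteness to guarantee MT-2 can be satisfied), contradicting branch (A); and Condition (K) forces every cycle to have an exit, since a cycle without exit violates Condition (K). Conversely a prime simple ring trivially has Krull dimension $0$ and, being simple, satisfies Condition (K) and has $E^{0}$ as its unique maximal tail with no proper maximal tails.

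For the second branch, the plan is to show that, for finite $E$ under Condition (K), the two conditions ``maximal tails form an antichain and no $M=M(u)$'' and ``$E_{M}$ has no non-empty proper hereditary saturated subset for each maximal tail $M$'' coincide with what Theorem \ref{Prime=maximal}'s Condition II already packages. In a finite graph, $B_{H}=\Phi$ for every hereditary saturated $H$ (no infinite emitters), so the clause ``no $M=M(u)$ for $u\in B_{H}$'' is vacuous and drops out, while the antichain condition combined with primeness of each $I_{(H,B_{H})}$ is exactly the statement that each $L_{K}(E)/I_{(H,B_{H})}\cong L_{K}(E\backslash(H,B_{H}))=L_{K}(E_{M})$ is simple, i.e. that $E_{M}$ has no proper non-empty hereditary saturated subset (via Proposition \ref{Simplicity Thm} or Theorem 3.11 of \cite{Abrams1}). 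I would verify that the antichain condition is automatic once each quotient is simple, since a strict containment of maximal tails would produce a strict containment of the corresponding prime ideals, breaking simplicity of the larger quotient.

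**The main obstacle** I expect is the careful bookkeeping in the first branch: translating ``$E^{0}$ contains no non-empty proper maximal tail'' into ``the only hereditary saturated subsets are $\Phi$ and $E^{0}$'' requires the finiteness of $E$ to ensure that the complement of a proper non-empty hereditary saturated set, which automatically satisfies MT-1 and MT-3 relative to the ambient MT-3 on $E^0$, can be trimmed to a genuine maximal tail (the delicate point being the MT-2 condition, which for a regular vertex in a finite graph is guaranteed). The converse directions and the second branch are then essentially dictionary translations using $B_{H}=\Phi$ and Proposition \ref{Simplicity Thm}, so they should be routine.
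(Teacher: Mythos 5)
Your overall route---specializing Theorem \ref{KrullDim0} to finite graphs and translating its conditions into the corollary's language---is viable and genuinely different from the paper's proof, which never passes through the statement of Theorem \ref{KrullDim0}: the paper instead uses unitality of $L_{K}(E)$ for finite $E$ to reduce Krull dimension zero to ``every prime ideal is maximal,'' and then reads off both branches directly from Theorem \ref{Main Theorem} together with Theorem 3.11 of \cite{Abrams1}. However, your proposal has a concrete error precisely at the step you yourself identify as the main obstacle. You claim that the complement of a proper non-empty hereditary saturated set $H$ ``automatically satisfies MT-1 and MT-3'' and that the delicate point is MT-2, to be secured by finiteness. This is exactly backwards. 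As the paper records in the Preliminaries, $E^{0}\setminus H$ automatically satisfies MT-1 \emph{and} MT-2 (MT-2 is just the contrapositive of saturation, in any graph), while MT-3 is what can fail, and it genuinely does fail even when $E^{0}$ satisfies MT-3: take $E^{0}=\{u,v,h\}$ with two loops at $u$, two loops at $v$, and edges $u\rightarrow h$, $v\rightarrow h$. Then $H=\{h\}$ is hereditary and saturated, $E^{0}$ satisfies MT-3 (every pair flows to $h$) and even Condition (K), but $E^{0}\setminus H=\{u,v\}$ violates MT-3 since $u\ngeq v$ and $v\ngeq u$. So the inheritance of MT-3 by complements, on which your first branch rests, is false, and the finiteness argument you propose (aimed at MT-2) does not repair it.

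The correct repair, and the place where finiteness actually enters, is a different trimming argument: starting from any vertex of $E^{0}\setminus H$, MT-2 (i.e.\ saturation) lets you follow edges whose ranges stay in $E^{0}\setminus H$, and finiteness forces you to reach either a sink of $E$ lying in $E^{0}\setminus H$ or a cycle lying entirely in $E^{0}\setminus H$. For such a vertex $v$ (a sink or a vertex on a cycle), the paper's Preliminaries note that $M(v)=\{w\in E^{0}:w\geq v\}$ is a maximal tail; heredity of $H$ gives $M(v)\cap H=\Phi$, so $M(v)$ is a non-empty \emph{proper} maximal tail, which is the contradiction branch (A) needs. The same lemma (or, alternatively, unitality: a non-zero proper ideal of the quotient embeds in a maximal, hence prime, ideal, whose trace on $E^{0}$ yields a maximal tail strictly inside $M$) is also what makes rigorous the direction ``antichain $\Rightarrow$ each $E_{M}$ has no proper non-empty hereditary saturated subsets'' in your second branch; it is not the pure dictionary translation you describe. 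With these repairs your derivation from Theorem \ref{KrullDim0} does go through, but as written the key reduction rests on a false claim.
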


\begin{proof}
Now $L_{K}(E)$ has a multiplicative idntity and so it will have Krull
dimension zero if and only if every prime ideal of $L_{K}(E)$ is maximal.
\ Suppose $L_{K}(E)$ has Krull dimension zero. If $\{0\}$ is a prime ideal
\ and hence a maximal ideal of $L_{K}(E)$, then $L_{K}(E)$ becomes a prime
ring which is simple. Suppose $\{0\}$ is not a prime ideal. Then $E^{0}$ does
not satisfy the MT-3 condition. As shown in the second paragraph of the proof
ofTheorem \ref{KrullDim0}, it is then clear that $E$ satisfies the Condition
(K) so that, in particular, every prime ideal is a graded ideal of the form
$I=<H>$, where $H=I\cap E^{0}$ with $M=E^{0}\backslash H$ a maximal tail.
Since $L_{K}(E\backslash(H)\cong L_{K}(E)/I$ \ is now a simple ring, Theorem
3.11 of \cite{Abrams1} implies that $E_{M}$, where $M=E^{0}\backslash
H=(E\backslash H)^{0}$, contains no proper non-empty hereditary saturated subsets.

Conversely, a prime simple ring clearly has Krull dimension zero. Suppose now
that $E$ satisfies the second set of stated conditions. Condition (K) implies
that every ideal of $L_{K}(E)$ is graded and so every prime ideal $P$ of
$L_{K}(E)$ is of the form $P=<H>$ where $H=P\cap E^{0}$ with $M=E^{0}%
\backslash H$ a maximal tail. By hypothesis, $(E\backslash H)^{0}=(E_{M})^{0}$
has no proper non-empty hereditary saturated subsets and $E\backslash H$
satisfies the Condition (K). So, by Theorem 3.11 of \cite{Abrams1},
$L_{K}(E)/P\cong L_{K}(E\backslash(H)$\ is a simple ring showing that $P$ is a
maximal ideal of $L_{K}(E)$. Hence $L_{K}(E)$ has Krull dimension zero.
\end{proof}

\subsection{Examples.}

The following examples illustrate graphs satisfying the properties stated in
Theorems \ref{Prime=maximal} and \ref{KrullDim0}, and Corollary
\ref{Finite Krull Dim 0}.

\begin{example}
\label{Ex 6.5}Let $E_{1}$ be the graph

$%
\begin{array}
[c]{ccccccccccc}
&  & \overset{w_{1}}{\bullet} &  &  &  & \overset{u_{1}}{\bullet} & \leftarrow
& \overset{v_{1}}{\bullet} &  & \\
& \swarrow &  & \searrow &  & \swarrow &  &  &  & \searrow & \\
\overset{w}{\bullet} &  &  &  & \overset{u_{3}}{\bullet} &  &  &  &  &  &
\overset{v}{\bullet}\\
& \nwarrow &  & \nearrow &  & \nwarrow &  &  &  & \nearrow & \\
&  & \overset{w_{2}}{\bullet} &  &  &  & \overset{u_{2}}{\bullet} & \leftarrow
& \overset{v_{2}}{\bullet} &  &
\end{array}
$
\end{example}

The graph $E_{1}$ is acyclic (hence trivially satisfies Condition (K)),
contains three non-trivial hereditary saturated subsets
\[
H=\{u_{1},u_{2},u_{3}\},H_{1}=\{w,w_{1},w_{2},u_{1},u_{2},u_{3}\},H_{2}%
=\{u_{1},u_{2},u_{3},v_{1},v_{2},v\}
\]
and two maximal tails $M_{1}=E_{1}^{0}\backslash H_{1}=\{v_{1},v_{2},v\}$
\ and $M_{2}=E_{1}^{0}\backslash H_{2}=\{w,w_{1},w_{2}\}$. Clearly, $M_{1}$
and $M_{2}$ contain no non-empty proper hereditary saturated subsets of
vertices. Thus $E_{1}$ satisfies the second condition of Corollary
\ref{Finite Krull Dim 0}. The prime (= maximal) ideals of $L_{K}(E_{1})$ are
$<H_{1}>$ and $<H_{2}>$. Thus $L_{K}(E_{1})$ has Krull dimension $0$. Note
that $<H>$ is not a prime ideal as $E_{1}^{0}\backslash H$ is not a maximal tail.

\begin{example}
\label{Ex 6.6}Let $E_{2}$ be the graph

$%
\begin{array}
[c]{ccccccccccc}
&  & \overset{w_{1}}{\bullet} &  &  &  & \overset{u_{1}}{\bullet} &
\overset{(\infty)}{\leftarrow} & \overset{v_{1}}{\bullet} &  & \\
& \swarrow &  & \searrow &  & \swarrow &  &  &  & \searrow & \\
\overset{w}{\bullet} &  &  &  & \overset{u_{3}}{\bullet} &  &  &  &  &  &
\overset{v}{\bullet}\\
& \nwarrow &  & \nearrow &  & \nwarrow &  &  &  & \nearrow & \\
&  & \overset{w_{2}}{\bullet} &  &  &  & \overset{u_{2}}{\bullet} & \leftarrow
& \overset{v_{2}}{\bullet} &  &
\end{array}
$
\end{example}

Now $E_{2}$ is the same as the graph $E_{1}$ with the exception that $v_{1}$
is now an infinite emitter with $r(s^{-1}(v_{1}))=\{u_{1}\}$ as indicated by $%
\begin{array}
[c]{ccc}%
\overset{u_{1}}{\bullet} & \overset{(\infty)}{\leftarrow} & \overset{v_{1}%
}{\bullet}%
\end{array}
$. Here $B_{H_{1}}=\{v_{1}\}$. Since $v\ngeqq v_{1}$, condition II(c) of
Theorem \ref{Prime=maximal}\ is satisfied. As before the conditions II(a),(b)
are also satisfied by $E_{2}$. The prime ideals of $L_{K}(E_{2})$ are
$I_{(H_{1},B_{H_{1}})}$ and $I_{(H_{2},\Phi)}$. Thus $L_{K}(E_{2})$ has Krull
dimension $0$.

\begin{example}
\label{Ex 6.7}Let $E_{3}$ be the graph

$%
\begin{array}
[c]{ccccccccccc}
&  & \overset{w_{1}}{\bullet} &  &  &  & \overset{u_{1}}{\bullet} &
\overset{(\infty)}{\leftarrow} & \overset{v_{1}}{\bullet} &  & \\
& \swarrow &  & \searrow &  & \swarrow &  &  &  & \searrow\nwarrow & \\
\overset{w}{\bullet} &  &  &  & \overset{u_{3}}{\bullet} &  &  &  &  &  &
\overset{v}{\bullet}\\
& \nwarrow &  & \nearrow &  & \nwarrow &  &  &  & \nearrow & \\
&  & \overset{w_{2}}{\bullet} &  &  &  & \overset{u_{2}}{\bullet} & \leftarrow
& \overset{v_{2}}{\bullet} &  &
\end{array}
$
\end{example}

The graph $E_{3}$ is obtained from $E_{2}$ by adding an edge connecting the
vertex $v$ to $v_{1}$. Now the ideals $I_{(H_{1},B_{H_{1}})}$ and
$I_{(H_{1},B_{H_{1}\backslash\{v_{1}\}})}$ are both prime ideals, but
$I_{(H_{1},B_{H_{1}\backslash\{v_{1}\}})}$ is not a maximal ideal since
$I_{(H_{1},B_{H_{1}\backslash\{v_{1}\}})}\subsetneqq I_{(H_{1},B_{H_{1}})}$.
Thus the Krull dimension of $L_{K}(E_{3})$ is not $0$. Note that condition
II(c) of Theorem \ref{Prime=maximal} is not satisfied.

\begin{example}
\label{Ex 6.8}(a) Let $E_{4}$ be the graph $%
\begin{array}
[c]{ccccc}
&  & \overset{v_{4}}{\bullet} & \leftarrow & \overset{v_{3}}{\bullet}\\
&  & \downarrow &  & \uparrow\\
\overset{u}{\bullet} & \overset{\longrightarrow}{\leftarrow} & \overset{v_{1}%
}{\bullet} & \rightarrow & \overset{v_{2}}{\bullet}%
\end{array}
$. Clearly $E_{4}$ satisfies Condition (L), the MT-3 condition and has no
non-empty proper hereditary saturated subset of vertices. Hence $L_{K}(E_{4})$
is a prime simple ring and has Krull dimension $0$. The same conclusion holds
if in the graph $E_{5}$\ we replace the cycle $v_{1}v_{2}v_{3}v_{4}$ by an
arbitrary cycle $c$.

(b) Let $E_{5}$ be the graph with a single vertex and a single loop. Clearly
$E_{5}$ satisfies the Condition I \ of Theorem \ref{Prime=maximal} and
$L_{K}(E_{5})\cong K[x,x^{-1}]$, which being a Euclidean domain has all its
non-zero prime ideals maximal. Since $K[x,x^{-1}]$ is an integral domain,
$\{0\}$ is a prime ideal and $L_{K}(E_{5})$ has Krull dimension one.
\end{example}

We now construct Leavitt path algebras of any prescribed Krull dimension.

\begin{example}
\label{Pyramid Graph} For each $n\leq\omega$, there exists a Leavitt path
algebra $L_{K}(E)$ with Krull dimension $n$.
\end{example}

Let $P_{\omega}=%
%TCIMACRO{\tbigcup \limits_{n\in\mathbb{N}}}%
%BeginExpansion
{\textstyle\bigcup\limits_{n\in\mathbb{N}}}
%EndExpansion
P_{n}$ be the "Pyramid" graph of length $\omega$ constructed inductively in
\cite{abrams5} and represented pictorially as follows.%
\[%
%TCIMACRO{\FRAME{itbpF}{4.3984in}{2.4016in}{0.0415in}{}{}{Figure}%
%{\special{ language "Scientific Word";  type "GRAPHIC";
%maintain-aspect-ratio TRUE;  display "USEDEF";  valid_file "T";
%width 4.3984in;  height 2.4016in;  depth 0.0415in;  original-width 4.3474in;
%original-height 2.3609in;  cropleft "0";  croptop "1";  cropright "1";
%cropbottom "0";  tempfilename 'LN92WS04.bmp';tempfile-properties "XPR";}} }%
%BeginExpansion
\raisebox{-0.0415in}{\includegraphics[
natheight=2.360900in,
natwidth=4.347400in,
height=2.4016in,
width=4.3984in
]%
{LN92WS04.bmp}%
}
%EndExpansion
\]
Specifically, the graph $P_{1}$ consists of the infinite line segment in the
first row of $P_{\omega}$. The pyramid\ graph $P_{2}$ consists of the vertices
in the top two infinite line segments together with all the edges they emit in
$P_{\omega}$. More generally, for any $n\geq1$, the pyramid graph $P_{n}$
consists of all the vertices in the first $n$ "rows" of $P_{\omega}$ together
with all the edges they emanate.

(i) \textbf{Claim:} For each integer $n$, the Krull dimension of
$L_{K}(P_{n+1})$ is $n$.

Now the graph $E=P_{n+1}$ \ is row-finite and contains the chain of pyramid
subgraphs $P_{1}\subsetneqq...\subsetneqq P_{n}$ where $P_{i}$ is embedded in
$P_{i+1}$ by identifying $P_{i}$ with the top $i$ "layers" of $P_{i+1}$.
\ Observe that for each $i=1,...,n$, $(P_{i})^{0}$ is a hereditary saturated
subset of $E^{0}$and $E^{0}\backslash(P_{i})^{0}$ satisfies the MT-3 condition
and so, the ideal $J_{i}=<(P_{i})^{0}>$ is a prime ideal of $L_{K}(E)$. Also
$E^{0}$ satisfies MT-3 and so, by Theorem \ref{primeLPA}, $\{0\}$ is a prime
ideal. Thus we get a chain of prime ideals $J_{0}=\{0\}\subsetneqq
J_{1}\subsetneqq...\subsetneqq J_{n}$. Moreover, $(P_{1})^{0},...,(P_{n})^{0}$
are the only proper non-empty hereditary saturated subsets of $E^{0}$ and so
$J_{1,}...,J_{n}$ are the only\ non-zero ideals of $L_{K}(E)$. This proves
that the Krull dimension of $L_{K}(P_{n+1})$ is exactly $n$.

(ii) \ Since $P_{\omega}=\cup_{n<\omega}P_{n}$, it is then clear that
$L_{K}(P_{\omega})$ is the union of the ascending chain of prime ideals
$J_{0}=\{0\}\subsetneqq J_{1}\subsetneqq...\subsetneqq J_{n}\subsetneqq......$
where $J_{n}=<(P_{n})^{0}>$. Thus $L_{K}(P_{\omega})$ has Krull dimension
$\omega$.

(iii) \ Similarly, using the transfinite construction of the pyramid graphs
$P_{\kappa}$ for various infinite ordinals $\kappa$ as given in \cite{abrams5}%
, we can establish the existence of \ Leavitt path algebras having Krull
dimension $\kappa$ for various infinite ordinals.

\textbf{Properties of\ }$L_{K}(P_{\kappa})$: The\ Leavitt path algebra
$L_{K}(P_{\kappa})$ of the Pyramid graph $P_{\kappa}$ has many interesting
properties: (i) It is a primitive von Neumann regular ring (as $E$ is acyclic;
see \cite{AR}) (ii) Every ideal of $L_{K}(P_{\kappa})$ is a primitive ideal
and is further graded (Justification: \ $E$ is acyclic, $E^{0}$ satisfies the
MT-3 condition and every hereditary saturated subset of $E^{0}$ satisfies the
CSP with respect to the countable set of vertices in the "first layer",
namely,\ the set $\{v_{11},v_{12},v_{13},....\}$); \ (iii) $L_{K}(P_{\kappa})$
is "two-sided" uniserial (that is, its ideals form a chain); (iv) Moreover,
the chain $J_{0}=\{0\}\subsetneqq J_{1}\subsetneqq...\subsetneqq
J_{i}\subsetneqq\cdot\cdot\cdot$ where $J_{i}=<(P_{i})^{0}>$ is a "two-sided
composition series" or "saturated" in the sense that, for each $i$, $J_{i}$ is
an ideal with $J_{i+1}/J_{i}$ a simple ring;\ (v) The ideals chain of
$L_{K}(P_{\kappa})$ is the socle series for $L_{K}(P_{\kappa})$, that is,
$J_{i+1}/J_{i}=soc(L_{K}(P_{\kappa})/J_{i})$, for all $i\geq0$.

\section{Minimal Prime Ideals of $L_{K}(E)$}

In this section, we characterize those ideals of an arbitrary Leavitt path
algebra which are minimal and also construct examples of such ideals. Recall
that a prime ideal $P$ of a ring $R$ is said to be a \textit{minimal prime
ideal} if there is no prime ideal $Q$ satisfying $Q\subsetneqq P$. Recall that
for any vertex $v$ in a graph $E$, the tree of $v$ is the set $T(v)=\{w\in
E^{0}:v\geq w\}$ and $M(v)=\{w\in E^{0}:w\geq v\}$.

For convenience in expression, we introduce the following definition.

\begin{definition}
\label{Property *}A hereditary saturated subset $H$ of vertices in the graph
$E$ is said to have the Property (*), if every non-empty proper maximal tail
$S$ in $H$ contains a vertex $u$ such that $T(u)\cap T(v)=\Phi$ (the empty
set) for some $v\notin H$. $\ \ $
\end{definition}

\begin{theorem}
\label{Minimal prime}Let $E$ be an arbitrary graph and $K$ be any field. Then
a non-zero prime ideal $P$ of $L_{K}(E)$ with $P\cap E^{0}=H$ is a minimal
prime ideal if and only $P$ is a graded prime ideal such that $H$ satisfies
the Property (*) and either $P=I_{(H,B_{H\backslash\{v\}})}$ for some $v\in
B_{H}$ or $P=I_{(H,B_{H})}$ with $E^{0}\backslash H\neq M(v)$ for all $v\in
B_{H}$.
\end{theorem}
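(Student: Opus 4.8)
The plan is to prove the equivalence through three independent reductions: first that a minimal prime must be graded, then that it must be the inclusion-minimal prime of its own stratum (this is condition~3), and finally that the absence of primes lying below it in \emph{other} strata is exactly what Property~(*) encodes. Throughout write $M=E^{0}\backslash H$, the maximal tail attached to $P$.

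I would first dispose of gradedness. If $P$ is non-graded, then by Theorem~\ref{Main Theorem} it has the form $\langle I_{(H,B_{H})},f(c)\rangle$, and Lemma~\ref{Lemma 3.6} shows $I_{(H,B_{H})}$ is itself prime; being graded it is properly contained in $P$, so $P$ is not minimal. Hence a minimal prime is graded, say $P=I_{(H,S)}$ with $S=B_{H}$ or $S=B_{H}\backslash\{u\}$. Next I would record that, with $H$ fixed, the primes of the stratum $\{Q\in Spec:Q\cap E^{0}=H\}$ have a well-defined bottom: by Theorem~\ref{Main Theorem}(i),(ii), if $E^{0}\backslash H=M(u)$ for some $u\in B_{H}$ then $I_{(H,B_{H}\backslash\{u\})}\subsetneq I_{(H,B_{H})}$, while every non-graded prime of the stratum contains $I_{(H,B_{H})}$. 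Thus the inclusion-minimal member of the stratum is precisely the $P$ described in condition~3, so a minimal prime must have this shape; conversely condition~3 guarantees that no prime of the \emph{same} stratum lies strictly below $P$.

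The heart of the argument is the cross-stratum analysis, and here I would set up a dictionary between primes below $P$ and maximal tails inside $H$. Suppose $Q\subsetneq P$ is prime with $H':=Q\cap E^{0}\subsetneq H$; applying Lemma~\ref{Lemma 3.6} to $Q$ gives the graded prime $I_{(H',B_{H'})}\subseteq Q\subsetneq P$, so I may assume $Q=I_{(H',B_{H'})}$ with $M':=E^{0}\backslash H'$ a maximal tail (Theorem~\ref{primeLPA}) strictly containing $M$. I would then show that $S:=M'\cap H=M'\backslash M$ is a non-empty maximal tail \emph{in} $H$ (verifying MT-1, MT-2, MT-3 relative to $H$, using that $H$ is hereditary), and that it is a \emph{proper} maximal tail exactly when $H'\neq\Phi$. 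The decisive reformulation is that any common descendant of a vertex $a\in M$ and a vertex $b\in S\subseteq H$ lies in $H$, and by MT-3 for $M'$ it lies in $M'$, hence in $S$; therefore the existence of $M'$ forces $T(u)\cap T(v)\neq\Phi$ for every $u\in S$ and every $v\in E^{0}\backslash H$, which is exactly the negation of Property~(*) witnessed by $S$. Running this backwards --- given a proper maximal tail $S$ in $H$ violating Property~(*), set $H'=E^{0}\backslash(M\cup S)$ and check that $M\cup S$ is a maximal tail, the mixed instances of MT-3 being supplied by the non-empty tree intersections --- produces a graded prime $I_{(H',B_{H'})}$ that I must finally show sits inside $P$.

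I expect the containment $I_{(H',B_{H'})}\subseteq P$ to be the main obstacle, since it is a genuine breaking-vertex computation rather than a formal consequence of $H'\subseteq H$. The vertices of $H'$ lie in $H\subseteq P$, so only the generators $v^{H'}$ for $v\in B_{H'}$ are at issue: for $v\in H$ every edge $e$ with $s(e)=v$ has $r(e)\in H\subseteq P$, so $v^{H'}\in P$, whereas for $v\in M$ the identity $v^{H'}-v^{H}=-\sum_{r(e)\in H\backslash H'}ee^{\ast}\in P$ reduces the question to whether $v^{H}\in P$, i.e.\ to whether $v\in S$ in the description $P=I_{(H,S)}$; this is where condition~3 together with Lemma~\ref{Lemma 3.5} must be used to control the single possible exceptional breaking vertex $u$. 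Lastly I would handle separately the degenerate case $H'=\Phi$, in which an empty-vertex graded prime (such as $\{0\}$) lies below $P$: then $M'=E^{0}$ is itself a maximal tail and $S=H$ is not proper, so this configuration falls outside the scope of Property~(*) and must be analysed directly, using Corollary~\ref{Cor 3.7} and Theorem~\ref{primeLPA} to pin down when such a prime exists. Assembling the two directions then yields the stated characterization.
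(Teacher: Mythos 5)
Your outline reproduces the paper's own proof almost step for step: gradedness via Lemma \ref{Lemma 3.6}, the within-stratum analysis via Theorem \ref{Main Theorem}(i),(ii), and the two-way dictionary between primes below $P$ in lower strata and maximal tails in $H$ witnessing the failure of Property (*) (your $H'=E^{0}\backslash(M\cup S)$ is exactly the paper's $H\backslash S$). The difference is that the paper silently asserts the two facts you flag as obstacles, whereas you defer them --- and they are genuine gaps, not technicalities, because both deferred steps actually fail.

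First, the containment step. To contradict minimality when Property (*) fails you need a prime ideal with vertex set $H'=H\backslash S$ inside $P$, and by Theorem \ref{Main Theorem} the only candidates are $I_{(H',B_{H'})}$ and $I_{(H',B_{H'}\backslash\{u\})}$. Your reduction $v^{H'}-v^{H}=-\sum_{r(e)\in H\backslash H'}ee^{\ast}$ presupposes $v\in B_{H}$, but a vertex of $B_{H'}$ lying in $M=E^{0}\backslash H$ need not be a breaking vertex for $H$. Take $E$ with an infinite emitter $v$, a vertex $s$ carrying a loop, sinks $h_{1},h_{2},\dots$, and edges $v\rightarrow s$ and $v\rightarrow h_{i}$ for all $i$; put $H=\{s,h_{1},h_{2},\dots\}$, $S=\{s\}$. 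Then $P=\langle H\rangle=I_{(H,B_{H})}$ (here $B_{H}=\Phi$) is a non-zero graded prime satisfying your condition 3, and Property (*) fails via $S$; yet $v\in B_{H'}\backslash B_{H}$, $v^{H'}\equiv v\pmod{P}$, so $I_{(H',B_{H'})}\nsubseteq P$, and $I_{(H',B_{H'}\backslash\{v\})}$ is not even prime since $M(v)=\{v\}\neq M\cup S$. Checking all strata shows $P$ is in fact a minimal prime, so the necessity direction itself is false here --- the paper's proof has the same hole, as it asserts that the ideal generated by $H\backslash S$ is prime, which is false once $B_{H\backslash S}$ contains such a $v$. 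Second, the case $H'=\Phi$ that you set aside for ``direct analysis'' genuinely occurs and kills sufficiency: in the graph with two vertices $a,b$, loops at both, and an edge $a\rightarrow b$, the ideal $P=\langle b\rangle$ is a non-zero graded prime, Property (*) holds vacuously for $H=\{b\}$ (a singleton has no non-empty proper maximal tails) and $B_{H}=\Phi$, yet $\{0\}\subsetneqq\langle f(c_{b})\rangle\subsetneqq P$ for the loop $c_{b}$ at $b$ and any irreducible $f(x)$. So neither deferred step can be completed as stated; the statement itself would have to be repaired (and the paper's proof, which skips both points, does not accomplish this) before your plan, or the paper's, could be turned into a proof.
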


\begin{proof}
Let $P$ be a non-zero minimal prime ideal. Now $P$ cannot be a non-graded
prime ideal, since by Theorem \ref{Main Theorem}(iii), $P$ is then of the form
$P=<I_{(H,B_{H})},p(c)>$ and by Lemma \ref{Lemma 3.6}, $I_{(H,B_{H}%
)}\subsetneqq P$ will (even if $0$) always be a prime ideal. Thus $P$ must be
a graded ideal of $L_{K}(E)$. If $H$ does not have the Property (*), then
there is a maximal tail $S$ in $H$ such that for all $u\in S$ and $v\in
E^{0}\backslash H$, $T(u)\cap T(v)\neq\Phi$ and so there is a $w\in T(u)\cap
T(v)$ such that $u\geq w$ and $v\geq w$. It is then readily seen that
$M=S\cup(E^{0}\backslash H)$ is a maximal tail in $E^{0}$. Then the ideal
generated by $E^{0}\backslash M=H\backslash S$ is, by Theorem
\ref{Main Theorem}, a non-zero prime ideal of $L_{K}(E)$ contained in $P$,
thus contradicting the minimality of $P$. Also if $P$ is of the form
$I_{(H,B_{H})}$, and $M(v)=E^{0}\backslash H$ for some $v\in B_{H}$, then
$I_{(H,B_{H})}$ will properly contain the prime ideal $I_{(H,B_{H\backslash
\{v\}})}$, a contradiction to the minimality of $I_{(H,B_{H})}$.

Conversely, let $P\neq0$ be a graded prime ideal of $L_{K}(E)$ satisfying the
given hypotheses.\ By Theorem \ref{Main Theorem}, $P$ is of the form
$I_{(H,B_{H\backslash\{u\}})}$\ for some $u\in B_{H}$ or of the form
$I_{(H,B_{H})}$. We claim that there is no prime ideal $J\subsetneqq P$ such
that $J\cap E^{0}=H$. This is clear (by Theorem \ref{Main Theorem}) when $P$
is of the form $I_{(H,B_{H\backslash\{u\}})}$. On the other hand, if $P$ is of
the form $I_{(H,B_{H})}$ and if there exists such a prime ideal $J$ \ with
$J\cap E^{0}=H$, then $J$ must be of the form $I_{(H,B_{H\backslash\{v\}})}$
for some $v\in B_{H}$ in which case $M(v)=E^{0}\backslash H$, a contradiction
to the hypothesis. Suppose now that there is a non-zero prime ideal
$J\subsetneqq P$ with $J\cap E^{0}=X\subsetneqq H$. By Theorem
\ref{Main Theorem}, $M=E^{0}\backslash X$ is a maximal tail in $E^{0}$.
Clearly $S=H\cap M$ is a maximal tail in $H$. But this $S$ contradicts the
Property (*) of $H$, since, by the MT-3 condition of $M$, for any $u\in
S\subset$ $M$ and $v\in E^{0}\backslash H\subset M$ \ there is a $w\in$ $M$
\ such that $u\geq w,v\geq w$ showing that $T(u)\cap T(v)\neq\Phi$. This
proves that $P$ is a minimal prime ideal of $L_{K}(E)$
\end{proof}

\textbf{Example:}

Consider the graph $E_{1}$ of Example \ref{Ex 6.5}. The (graded) ideal $I$
generated by\ the hereditary saturated set $H_{1}=\{w,w_{1},w_{2},u_{1}%
,u_{2},u_{3}\}$ is, by the Theorem \ref{Minimal prime}, a minimal prime ideal
of $L_{K}(E_{3})$, since $S=\{w,w_{1},w_{2}\}$ is the only proper maximal tail
in $H_{1}$ and for $w_{1}\in S$ and $v\notin H_{1}$, we have $T(w_{1})\cap
T(v)=\Phi$.

For row-finite graphs Theorem \ref{Minimal prime} reduces to the following.

\begin{corollary}
\label{row-finite minimal prime}Let $E$ be a row-finite graph and $K$ be any
field. Then a non-zero prime ideal $P$ with $P\cap E^{0}=H$ is a minimal prime
ideal if and only if $P=<H>$ and $H$ is satisfies the Property (*).
\end{corollary}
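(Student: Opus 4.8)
The plan is to obtain this corollary as a direct specialization of Theorem \ref{Minimal prime}, exploiting the defining feature of row-finite graphs. The crucial observation is that in a row-finite graph $E$ every vertex emits only finitely many edges, so $E$ contains no infinite emitters; consequently, for every hereditary saturated subset $H$ of $E^{0}$, the set of breaking vertices $B_{H}$ is empty, since a breaking vertex of $H$ is by definition an infinite emitter.

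First I would record this fact, namely that $B_{H}=\Phi$ for every hereditary saturated $H$, and then feed it into the two admissible possibilities listed in Theorem \ref{Minimal prime}. The alternative $P=I_{(H,B_{H}\backslash\{v\})}$ for some $v\in B_{H}$ becomes vacuous, as there is no such $v$. In the surviving alternative $P=I_{(H,B_{H})}$, emptiness of $B_{H}$ gives $I_{(H,B_{H})}=I_{(H,\Phi)}=<H>$, the ideal generated by the vertices in $H$, while the side condition "$E^{0}\backslash H\neq M(v)$ for all $v\in B_{H}$" holds vacuously. Thus Theorem \ref{Minimal prime} collapses to the single assertion that a non-zero prime $P$ with $P\cap E^{0}=H$ is minimal if and only if $P=<H>$ and $H$ has Property (*), which is exactly the claim.

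For completeness I would spell out the two directions. For necessity, if $P$ is a non-zero minimal prime with $P\cap E^{0}=H$, then Theorem \ref{Minimal prime} forces $P$ to be graded, hence of the form $I_{(H,B_{H})}=<H>$ (using $B_{H}=\Phi$), and forces $H$ to satisfy Property (*). For sufficiency, if $P=<H>$ and $H$ satisfies Property (*), then $P=I_{(H,\Phi)}=I_{(H,B_{H})}$ and the condition on $M(v)$ is vacuous, so Theorem \ref{Minimal prime} guarantees that $P$ is a minimal prime ideal of $L_{K}(E)$.

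Since this is a pure specialization, there is no genuine obstacle; the only point requiring care is verifying that row-finiteness indeed forces $B_{H}=\Phi$, which follows at once from the definition of a breaking vertex. I would emphasize that it is precisely this emptiness of $B_{H}$ that simultaneously eliminates the $I_{(H,B_{H}\backslash\{v\})}$ alternative and trivializes the side condition on $M(v)$, leaving Property (*) as the sole surviving requirement.
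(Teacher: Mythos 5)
Your proposal is correct and follows exactly the route the paper intends: the corollary is stated as a direct specialization of Theorem \ref{Minimal prime}, and your observation that row-finiteness forces $B_{H}=\Phi$ (so the $I_{(H,B_{H}\backslash\{v\})}$ alternative is vacuous, the $M(v)$ side condition holds trivially, and $I_{(H,B_{H})}=<H>$) is precisely the reduction the paper leaves implicit. Nothing is missing.
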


\section{Height One Prime Ideals of $L_{K}(E)$}

Recall that the\textbf{\ height }of a prime ideal $P$ is $n$ if $n$ is the
largest integer such that there exists a chain of different prime ideals
$P=P_{0}\supsetneqq P_{1}\supsetneqq....\supsetneqq P_{n}$. Thus minimal prime
ideals have height $0$. Also a ring $R$ has Krull dimension $0$ if and only if
every prime ideal of $R$ has height $0$. The concept the height can also be
defined for infinite ordinals $\lambda$ in an analogous fashion. The
\textbf{height one} prime ideals play an important role in the study of
commutative rings and algebraic geometry. In this section, we describe the
height one prime ideals of an arbitrary Leavitt path algebra.

\begin{theorem}
\label{Height-1}Let $E$ be an arbitrary graph and $K$ be any field. Then a
prime ideal $P$ of the Leavitt path algebra $L_{K}(E)$ with $P\cap E^{0}=H$
\ is a prime ideal of height one if and only if $P$ has one of the following properties:

(i) \ $P$ is a non-graded prime ideal with EITHER $H=\Phi$ (the empty set) OR
$H$ satisfies Property (*) and for every $u\in B_{H}$, $M(u)$ $\neq
E^{0}\backslash H$.

(ii) $P=I_{(H,B_{H})}$, and EITHER \ there is a vertex $u\in B_{H}$ such that
$M(u)=E^{0}\backslash H$ and $H$ satisfies Property (*) in $E^{0}$ OR
$M(u)\neq E^{0}\backslash H$ for any $u\in B_{H}$ and there is exactly one
maximal tail in $E^{0}$ properly containing $E^{0}\backslash H$;

(iii) $P=I_{(H,B_{H}\backslash\{u\})}$ and there is exactly one maximal tail
in $E^{0}$ properly containing $M(u)$.
\end{theorem}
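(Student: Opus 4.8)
The plan is to combine the complete prime-ideal classification of Theorem~\ref{Main Theorem}, the minimal-prime description of Theorem~\ref{Minimal prime}, and the ``Remark'' on incomparability of non-graded primes sharing the same $H$. The key structural fact I would exploit is that, by Lemma~\ref{Lemma 3.6}, any non-graded prime $P=\langle I_{(H,B_H)},f(c)\rangle$ always sits strictly above the graded prime $I_{(H,B_H)}$, and by the Remark two distinct non-graded primes with the same vertex-trace $H$ are incomparable. Consequently, for a non-graded $P$, the unique prime lying immediately below $P$ (if any) is $I_{(H,B_H)}$ itself, and $P$ has height one precisely when $I_{(H,B_H)}$ is a minimal prime or is zero. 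This instantly reduces case~(i) to asking when $I_{(H,B_H)}$ has height zero: either $H=\Phi$ (so $I_{(H,B_H)}=\{0\}$), or, invoking Theorem~\ref{Minimal prime}, $H$ has Property~(*) and no $u\in B_H$ gives $M(u)=E^{0}\backslash H$.

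For the graded cases I would first note that any prime strictly below a graded prime $P$ with $P\cap E^{0}=H$ must, by Theorem~\ref{Main Theorem}, be graded (a non-graded prime below $P$ would itself sit above its own $I_{(H',B_{H'})}$, and I track vertex-traces to rule this out) and that a prime $Q\subsetneqq P$ either shares the trace $H$ or has $Q\cap E^{0}=X\subsetneqq H$ with $E^{0}\backslash X$ a maximal tail properly containing $E^{0}\backslash H$. In case~(iii), $P=I_{(H,B_H\backslash\{u\})}$, so the only graded prime below $P$ sharing trace $H$ is excluded by the structure (it would force $S=B_H\backslash\{u\}$ to drop further, which the admissible-pair bookkeeping of Lemma~\ref{Lemma 3.5} forbids), leaving only primes with strictly smaller trace; height one then says there is exactly one maximal tail properly containing $M(u)=E^{0}\backslash H$, matching the stated condition. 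In case~(ii) I would split according to whether some $u\in B_H$ realizes $M(u)=E^{0}\backslash H$: if so, the unique prime immediately below $I_{(H,B_H)}$ is the graded prime $I_{(H,B_H\backslash\{u\})}$ of the \emph{same} trace, and height one reduces (via Theorem~\ref{Minimal prime}) to $H$ having Property~(*); if not, every prime below must drop the trace, and height one becomes the statement that exactly one maximal tail properly contains $E^{0}\backslash H$.

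The unifying mechanism in all three cases is the dichotomy ``prime below $P$ either keeps the trace $H$ or strictly shrinks it,'' so I would prove once and for all the translation dictionary: primes with the same trace $H$ below a given graded prime are governed by the admissible-pair comparison of Lemma~\ref{Lemma 3.5} (and are incomparable once non-graded, by the Remark), while primes with strictly smaller trace $X$ correspond bijectively to maximal tails $E^{0}\backslash X$ properly containing $E^{0}\backslash H$, with minimality among these controlled by Property~(*) through Theorem~\ref{Minimal prime}. Having $P$ of height one then means: the chains below $P$ have length exactly one, i.e. there is a prime immediately below $P$ and every such prime is minimal. I would verify in each case that the stated graph-theoretic condition is equivalent to ``there is a prime directly below $P$ and it is a minimal prime,'' reading off minimality from Theorem~\ref{Minimal prime} and the count of maximal tails from Theorem~\ref{Main Theorem}.

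The main obstacle I anticipate is the careful separation, in case~(ii), between primes beneath $P=I_{(H,B_H)}$ that preserve the trace $H$ versus those that strictly lower it, because these are governed by genuinely different phenomena (breaking-vertex bookkeeping versus maximal-tail containment) yet can coexist. Getting the ``exactly one maximal tail properly containing $E^{0}\backslash H$'' condition to correspond precisely to \emph{height one} rather than merely to the existence of some prime below will require showing that this unique larger maximal tail yields a \emph{minimal} prime, which is where Property~(*) re-enters; I expect the bulk of the work to lie in confirming that no hidden intermediate prime can appear, so that the chain length is exactly one and not two or more.
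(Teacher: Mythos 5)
Your proposal follows the same architecture as the paper's proof: split according to the three types of primes in Theorem \ref{Main Theorem}, reduce case (i) to minimality of $I_{(H,B_{H})}$ via Theorem \ref{Minimal prime}, and in the graded cases translate primes of strictly smaller vertex-trace into maximal tails properly containing $E^{0}\backslash H$. However, the steps carrying the real weight are missing. In case (i), your ``consequently'' does not follow from the two facts you cite: Lemma \ref{Lemma 3.6} places $I_{(H,B_{H})}$ strictly below $P$, and the Remark makes non-graded primes with the \emph{same} trace $H$ incomparable, but neither says anything about a non-graded prime $J=<I_{(H',B_{H'})},g(c')>$ inside $P$ with trace $H'\subsetneqq H$. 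The paper devotes a separate argument to exactly this configuration: since $H'\subsetneqq H$, the hereditary set $H$ meets $E^{0}\backslash H'=M(v')$, hence $v'\in H$, hence $(c')^{0}\subset H$, so $g(c')\in I_{(H,B_{H})}$ and therefore $J\subset I_{(H,B_{H})}$. Without this (or the equivalent observation that such a $J$ would force the prime $I_{(H',B_{H'})}\subsetneqq I_{(H,B_{H})}$, destroying minimality), your equivalence ``$P$ has height one iff $I_{(H,B_{H})}$ is minimal or zero'' is not established.

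In the graded cases (ii) and (iii) there are two further problems. First, your opening claim that ``any prime strictly below a graded prime $P$ must be graded'' is false as a standalone statement: in the paper's own Example \ref{Ex 4.7}, the non-graded primes $P_{f(x)}$ (trace $H_{1}$) lie strictly inside the graded prime $I_{(H_{2},B_{H_{2}})}$. The claim is only usable under the height-one hypothesis (a non-graded prime below $P$ contains its own graded prime by Lemma \ref{Lemma 3.6}, creating a chain of length two), which is how the paper uses it in the necessity direction; in the sufficiency direction one must instead argue from the stated graph conditions that no such chain can occur, and your sketch does not do this. Second, and more seriously, you correctly gloss ``height one'' as ``some prime lies immediately below $P$ and every prime below $P$ is minimal,'' but conditions (ii) and (iii) assert the existence of \emph{exactly one} maximal tail properly containing $E^{0}\backslash H$; bridging these two statements --- in particular ruling out several incomparable maximal tails above $E^{0}\backslash H$, each of which would produce a (possibly minimal) prime below $P$ --- is precisely the step you postpone as ``the bulk of the work.'' The paper closes this by arguing that under the height-one hypothesis the prime $J\subsetneqq P$ is unique, graded, with trace $X\subsetneqq H$, and then matching $J$ with the maximal tail $E^{0}\backslash X$; your proposal neither proves this uniqueness nor substitutes another mechanism, so the equivalences in (ii) and (iii) remain unproven in your write-up.
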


\begin{proof}
We distinguish three cases corresponding to the three types of prime ideals
indicated in Theorem \ref{Main Theorem}.

Suppose $P$ is a non-graded prime ideal. Now, by Lemmas \ref{Lemma 3.5} and
\ref{Lemma 3.6}, $I_{(H,B_{H})}$ is a prime ideal that contains every graded
ideal inside $P$. Moreover, $I_{(H,B_{H})}$ also contains any non-graded prime
ideal of $L_{K}(E)$ inside $P$. Because, if $J=<I_{(H^{\prime},B_{H^{\prime}%
})},g(c^{\prime})>$ is a non-graded prime ideal properly contained in $P$ with
$c^{\prime}$ a cycle without exits based at a vertex $v^{\prime}$ in
$E^{0}\backslash H^{\prime}$, then necessarily $H^{\prime}\subsetneqq H$ and
so $H$ contains a $w\in$ $E^{0}\backslash H^{\prime}=M(v^{\prime})$. Then the
hereditary set $H$ contains $v^{\prime}$ and hence $(c^{\prime})^{0}$ and this
implies that the ideal $I_{(H,B_{H})}$ contains $g(c^{\prime})$ and hence $J$.
Consequently, $P$ has height one if and only if $I_{(H,B_{H})}$ is a minimal
prime ideal. By Theorem \ref{Minimal prime}, this is possible if and only if
either $I_{(H,B_{H})}=\{0\}$, that is $H=\Phi$, or for every $u\in B_{H}$,
$M(u)$ $\neq E^{0}\backslash H$ and $H$ satisfies Property (*).

Suppose $P$ is a graded prime ideal of the form $I_{(H,B_{H})}$. By Theorem
\ref{Main Theorem}, $I_{(H,B_{H})}$ contains a prime ideal of the form
$I_{(H,B_{H}\backslash\{u\})}$ for some $u\in B_{H}$ exactly when
$M(u)=E^{0}\backslash H$. Thus in this case, $I_{(H,B_{H})}$ will have height
one if and only if $I_{(H,B_{H}\backslash\{u\})}$ is a minimal prime ideal.
Appealing to Proposition \ref{Minimal prime}, we then conclude that in this
case $I_{(H,B_{H})}$ has height one if and only if for some $u\in B_{H}$,
$M(u)=E^{0}\backslash H$ and $H$ satisfies the Property (*). \ Suppose on the
other hand, $M(u)\neq E^{0}\backslash H$ for any $u\in B_{H}$. If
$I_{(H,B_{H})}$ has height one, then the unique prime ideal $J\subsetneqq
I_{(H,B_{H})}$ must necessarily be a graded ideal, since every non-graded
prime ideal, by Lemma \ref{Lemma 3.6}, contains another (graded) prime ideal.
Moreover, $X=J\cap E^{0}\subsetneqq H$ since otherwise $J=I_{(H,B_{H}%
\backslash\{u\})}$ for some $u\in B_{H}$ and this contradicts our supposition
that $M(u)\neq E^{0}\backslash H$ for any $u\in B_{H}$. Thus the graded ideal
$J$ is the only prime ideal contained in $I_{(H,B_{H})}$ with $X=J\cap
E^{0}\subsetneqq H$ and this happens if and only if $M=E^{0}\backslash X$ is
the only maximal tail properly containing $E^{0}\backslash H$.

Suppose $P$ is a graded prime ideal of the form $I_{(H,B_{H}\backslash\{u\})}$
for some $u\in B_{H}$. Then $I_{(H,B_{H}\backslash\{u\})}$ has height one if
and only if there exists exactly one prime ideal $J\subsetneqq P$. As argued
in the preceding paragraph, this ideal $J$ has to be \ graded ideal with
$J\cap E^{0}=X\subsetneqq H$. Thus $J$ will be the only prime ideal contained
in $P$ if and only if $M=E^{0}\backslash X$ is the only maximal tail properly
containing $E^{0}\backslash H$.
\end{proof}

\textbf{Remark}: From the proof of Theorem \ref{Height-1} we get a slightly
sharper form of Lemmas \ref{Lemma 3.5} and \ref{Lemma 3.6} for prime ideals:
If $P=<I_{(H,B_{H})},f(c)>$ is a non-graded prime ideal of $L_{K}(E)$, then
$I_{(H,B_{H})}$ is a prime ideal that contains every other prime ideal of
$L_{K}(E)$ inside $P$.

For row-finite graphs we get the following.

\begin{corollary}
\label{row-finite height 1} Let $E$ be a row-finite graph and $K$ be any
field. Then a prime ideal $P$ of $L_{K}(E)$ with $P\cap E^{0}=H$ has height
$1$ if and only if EITHER $P$ is a non-graded ideal with $H$ empty or $H$
satisfying the Property (*), OR $P$ is a graded ideal such that there is
exactly one maximal tail containing $E^{0}\backslash H$.
\end{corollary}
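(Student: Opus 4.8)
The plan is to obtain this corollary as a direct specialization of Theorem \ref{Height-1} to row-finite graphs, exactly in the spirit of Corollaries \ref{Row-finite Case} and \ref{row-finite minimal prime}. The one structural simplification I would exploit is that in a row-finite graph no vertex is an infinite emitter, so for every hereditary saturated subset $H\subseteq E^{0}$ the set of breaking vertices $B_{H}$ is empty. Consequently $I_{(H,B_{H})}=\langle H\rangle$, and every graded ideal is of this form, so the admissible-pair data collapses to the choice of $H$ alone.

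With $B_{H}=\emptyset$ I would then run through the three cases of Theorem \ref{Height-1} and discard whatever the breaking vertices contribute. In case (i) the clause ``for every $u\in B_{H}$, $M(u)\neq E^{0}\setminus H$'' is vacuously satisfied, so the non-graded height-one primes are precisely those with $H=\Phi$ or with $H$ satisfying Property (*); this is the first disjunct of the corollary. Case (iii) presents a prime of the form $I_{(H,B_{H}\setminus\{u\})}$ for some $u\in B_{H}$, which cannot occur once $B_{H}=\emptyset$, so it drops out entirely. In case (ii) the first alternative likewise requires a vertex $u\in B_{H}$ and hence disappears, while in the second alternative the hypothesis ``$M(u)\neq E^{0}\setminus H$ for any $u\in B_{H}$'' is again vacuous; what survives is a graded prime $P=\langle H\rangle$ for which there is exactly one maximal tail properly containing $E^{0}\setminus H$. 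Since in the row-finite setting $E^{0}\setminus H$ is itself a maximal tail whenever $\langle H\rangle$ is a graded prime, this is the content of the corollary's second disjunct, the phrase ``containing $E^{0}\setminus H$'' there being read as ``properly containing'' (as in Theorem \ref{Height-1}); otherwise $\langle H\rangle$ would be a minimal prime of height $0$.

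Having matched the two disjuncts and verified that case (iii) and the breaking-vertex alternatives of (i) and (ii) vanish, the reduction is complete, since by Theorem \ref{Main Theorem} the listed possibilities exhaust all prime ideals of a row-finite $L_{K}(E)$. I do not expect a genuine obstacle here: the only thing requiring care is the bookkeeping of which conditions become \emph{vacuous} versus \emph{impossible} once $B_{H}=\emptyset$, together with the harmless reinterpretation of ``containing'' as ``properly containing'' forced by the fact that $E^{0}\setminus H$ is always a maximal tail. No new invocation of Property (*) or of the minimal-prime characterization (Corollary \ref{row-finite minimal prime}) is needed beyond what Theorem \ref{Height-1} already supplies.
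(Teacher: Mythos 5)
Your proposal is correct and is exactly the derivation the paper intends: Corollary \ref{row-finite height 1} is stated as an immediate specialization of Theorem \ref{Height-1}, using the fact that $B_{H}=\emptyset$ for every hereditary saturated $H$ in a row-finite graph, so that case (iii) and the breaking-vertex alternatives vanish while the remaining conditions become vacuous. Your observation that ``containing'' must be read as ``properly containing'' (since $E^{0}\backslash H$ is itself a maximal tail, and otherwise the condition would characterize minimal primes of height $0$) is the right reading of the paper's statement and handles the only delicate point.
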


\textbf{Example:} Consider the graph $G$ given below:%
%TCIMACRO{\FRAME{fhF}{1.7028in}{1.9268in}{0pt}{}{}{Figure}%
%{\special{ language "Scientific Word";  type "GRAPHIC";
%maintain-aspect-ratio TRUE;  display "USEDEF";  valid_file "T";
%width 1.7028in;  height 1.9268in;  depth 0pt;  original-width 1.6665in;
%original-height 1.8887in;  cropleft "0";  croptop "1";  cropright "1";
%cropbottom "0";  tempfilename 'LN92WS05.bmp';tempfile-properties "XPR";}} }%
%BeginExpansion
\begin{figure}
[h]
\begin{center}
\includegraphics[
natheight=1.888700in,
natwidth=1.666500in,
height=1.9268in,
width=1.7028in
]%
{LN92WS05.bmp}%
\end{center}
\end{figure}
%EndExpansion

Let $c$ denote the loop based at $w$. Now $G^{0}$ satisfies the MT-3 condition
and so $\{0\}$ is a prime ideal of $L_{K}(G)$, by Theorem \ref{primeLPA}. Note
that $c$ is a cycle without exits in $G$. \ Now the non-zero proper ideals of
$L_{K}(G)$ are the graded ideal $<w>$ and the infinitely many non-graded
ideals $<p(c)>$, for various irreducible polynomials $p(x)\in K[x,x^{-1}]$. By
the above Theorem, every non-zero proper ideal of $L_{K}(G)$ is a prime ideal
of height one.

\section{Co-Height One Prime Ideals of $L_{K}(E)$}

A prime ideal $P$ of a ring $R$ is said to have \textbf{co-height }$n$ if $n$
is the largest integer such that there exists a chain of different prime
ideals \ $P=P_{0}\subsetneqq P_{1}\subsetneqq...\subsetneqq P_{n}\neq R$.
Thus, in particular, $P$ will have \textbf{co-height }$1$ if there exists a
prime ideal $Q\neq R$ such that $P\subset Q$ and there is no prime ideal $I$
such that $P\subsetneqq I\subsetneqq Q$ and no prime ideal $J$ such that
$Q\subsetneqq J\subsetneqq R$ and every other prime ideal $P^{\prime}\supset
P$ also has the same property as $Q$.

We shall describe the prime ideals of co-height $1$ by means of graphical
properties of $E$ and construct examples illustrating these properties.

\begin{theorem}
\label{co-height 1}Let $E$ be an arbitrary graph and $K$ be any field. Then a
prime ideal $P$ of the Leavitt path algebra $L_{K}(E)$ with $P\cap E^{0}=H$
\ has co-height $1$ if and only if $P$ satisfies one of the following
conditions:$\ $

(i) $\ P$ is a non-graded ideal such that all the non-empty maximal tails
properly contained in \ $E^{0}\backslash H$ satisfy the Condition (L) and form
a non-empty antichain under set inclusion;

(ii) $P$ is a graded ideal of the form $I_{(H,B_{H})}$ such that EITHER (a)
all the non-empty maximal tails properly contained in \ $E^{0}\backslash H$
satisfy the Condition (L) and form a non-empty antichain under set inclusion
OR (b) there are no non-empty maximal tails properly contained in
$E^{0}\backslash H$ and there exists a cycle without exits in $E^{0}\backslash
H$ based at a vertex $v$ such that that $E^{0}\backslash H=M(v)$;

(iii) \ $P$ is a graded ideal of the form $I_{(H,B_{H}\backslash\{u\})}$ for
some $u\in B_{H}$, such that every cycle containing $u$ has exits in
$E^{0}\backslash H$ and that there are no non-empty maximal tails properly
contained in $E^{0}\backslash H$.
\end{theorem}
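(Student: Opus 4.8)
The plan is to compute, for a prime ideal $P$ with $P\cap E^{0}=H$ and $M=E^{0}\setminus H$, the full poset of prime ideals lying strictly above $P$, and then to observe that by the definition of co-height, $P$ has co-height $1$ precisely when that poset is non-empty and every member of it is a maximal (proper) prime ideal. The two tools I would lean on are Theorem \ref{Main Theorem}, which parametrises every prime $Q$ by the maximal tail $M_{Q}=E^{0}\setminus(Q\cap E^{0})$ together with its type ($I_{(H',B_{H'})}$, a breaking-vertex ideal $I_{(H',B_{H'}\setminus\{u\})}$, or a non-graded ideal $\langle I_{(H',B_{H'})},f(c)\rangle$), and the stratification of Section 5, which records the order inside each stratum. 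Combining these with the Remark after Theorem \ref{Height-1} and the Remark closing Section 3, I get a description of the maximal proper primes: a non-graded prime is maximal proper exactly when its maximal tail is minimal among maximal tails; a top graded ideal $I_{(H',B_{H'})}$ is maximal proper exactly when its tail $M'$ is minimal and in addition satisfies Condition (L), so that no non-graded prime sits above it; and a breaking-vertex ideal is never maximal proper, since $I_{(H',B_{H'})}$ always lies strictly above it.

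I would then run through the three types of $P$ given by Theorem \ref{Main Theorem}, using the guiding split of the primes above $P$ into same-stratum primes and cross-stratum primes (those with tail $M'\subsetneqq M$). The governing computation for the cross-stratum primes is a containment criterion extracted from Lemma \ref{Lemma 3.5}: $I_{(H,B_{H})}\subseteq Q'$ holds iff every generator $w^{H}$ with $w\in B_{H}$ lies in the largest graded ideal $I_{(H',S')}$ of $Q'$, and reducing $w^{H}$ modulo $I_{(H',S')}$ turns this into a question of whether $w$ is a breaking vertex of $H'$ lying in $S'$. This is exactly what forces the hypotheses on the maximal tails properly contained in $M$: a tail $M'\subsetneqq M$ supporting a non-graded prime, or a strictly descending pair $M''\subsetneqq M'\subsetneqq M$, would create an ascending chain of length two above $P$ and hence co-height $\geq 2$. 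So the non-graded case produces condition (i), and the $I_{(H,B_{H})}$ sub-case in which $M$ itself satisfies Condition (L) produces condition (ii)(a).

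The remaining configurations are driven by the same-stratum structure. If $P=I_{(H,B_{H})}$ and $M$ carries a cycle without exits, the infinitely many non-graded primes $\langle I_{(H,B_{H})},g(c)\rangle$ lie immediately above $P$; a cycle without exits forces $M=M(v)$, and these primes are maximal proper exactly when $M$ has no proper maximal sub-tail, which is condition (ii)(b). If instead $P=I_{(H,B_{H}\setminus\{u\})}$, then $M=M(u)$ and $I_{(H,B_{H})}$ is already a prime strictly above $P$ in the same stratum; co-height $1$ requires this $I_{(H,B_{H})}$ to be maximal proper, which by the description above forces $M$ to satisfy Condition (L)—equivalently, since $M(u)=M(v)$ whenever $v$ is the base of a cycle without exits in $M$, every cycle through $u$ has an exit—and forces $M$ to have no proper maximal sub-tail; this is condition (iii).

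The step I expect to be the main obstacle is the precise cross-stratum containment analysis, and within it the breaking-vertex bookkeeping in cases (ii)(a) and (iii). I must decide exactly which of the (up to two) primes in a lower stratum $M'\subsetneqq M$ actually contain $P$: the breaking-vertex ideal $I_{(H',B_{H'}\setminus\{u'\})}$ contains $P=I_{(H,B_{H})}$ only when $u'$ fails to be a breaking vertex of $H$, and pinning down this matching—including the case where $u'$ emits infinitely many edges into $M\setminus M'$—is what guarantees that no concealed length-two chain survives. The subordinate obstacle is the clean bifurcation of case (ii) according to whether $M=E^{0}\setminus H$ satisfies Condition (L) or carries a cycle without exits, so that the same-stratum non-graded primes are correctly tallied against the cross-stratum ones in the final co-height count.
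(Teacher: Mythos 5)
Your proposal reproduces the architecture of the paper's own proof---split into the three types of prime ideals given by Theorem \ref{Main Theorem}, then determine which primes lie above $P$ in the same stratum and in lower strata---but it is only an architecture. The step you yourself label ``the main obstacle,'' namely deciding exactly which primes with trace $H'\supsetneq H$ actually contain $P$, is never carried out, and that step is where the entire content of the theorem lives. Your proposed tool (reduce each generator $w^{H}$, $w\in B_{H}$, modulo $I_{(H',S')}$ and ask whether $w$ is a breaking vertex of $H'$ lying in $S'$) is the right one, but announcing it is not running it; every conclusion you then draw---``a tail $M'\subsetneq M$ would create an ascending chain above $P$,'' ``this produces condition (i)''---silently presumes that a maximal tail $M'\subsetneq E^{0}\setminus H$ always yields a prime ideal above $P$, which is precisely the claim that needed proof. (The paper's proof has the same defect: it repeatedly asserts containments such as $P\subsetneq I_{(X,B_{X})}$ whenever $H\subsetneq X$ without justification.)

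The deeper problem is that this presumed correspondence is false, so the computation you outline, done honestly, does not terminate in conditions (i)--(iii) as literally stated. Take $E$ with vertices $w,v$ and sinks $h_{1},h_{2},\dots$, where $w$ emits one edge $e$ to $v$ and one edge to each $h_{i}$ (so $w$ is an infinite emitter) and $v$ hosts a loop $c$. Then $H=\{h_{1},h_{2},\dots\}$ is hereditary saturated, $B_{H}=\{w\}$, $E^{0}\setminus H=\{w,v\}=M(v)$, and $c$ is a cycle without K, so $P=\langle H,w^{H},f(c)\rangle$ is a non-graded prime by Theorem \ref{Main Theorem}(iii). The unique non-empty maximal tail of $E$ properly contained in $E^{0}\setminus H$ is $\{w\}$ (its complement $H\cup\{v\}$ is hereditary and saturated), which trivially satisfies Condition (L) and forms a non-empty antichain, so condition (i) holds. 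Yet the prime attached to that tail, $I_{(H\cup\{v\},\emptyset)}=\langle H\cup\{v\}\rangle$, does \emph{not} contain $P$: every edge leaving $w$ has range in $H\cup\{v\}$, so $w\notin B_{H\cup\{v\}}$, and $w^{H}=w-ee^{\ast}$ reduces to $w\neq 0$ modulo $\langle H\cup\{v\}\rangle$. Running through the remaining primes of $L_{K}(E)$ shows none properly contains $P$; indeed $P+\langle v\rangle=L_{K}(E)$, so $P$ is a maximal ideal of co-height $0$, not $1$. This is exactly the breaking-vertex pathology you flagged: $w\in B_{H}$ drops out of $B_{H'}$ entirely when all of its edges into $E^{0}\setminus H$ land in $H'\setminus H$. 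The correct execution of your plan forces the conditions to be read on the maximal tails of the restricted graph $E_{E^{0}\setminus H}$ (there $w$ becomes a regular vertex and saturation eliminates the tail $\{w\}$), not on the maximal tails of $E$ that the statement and your argument work with. Without establishing the containment criterion and making this correction explicit, the proposal cannot be completed into a proof.
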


\begin{proof}
By Theorem \ref{Main Theorem}, there are three types prime ideals in
$L_{K}(E)$. Accordingly, we consider three different cases.

Case (i): Let $P$ be a non-graded prime ideal, so by Theorem
\ref{Main Theorem}, $P=<I_{(H,B_{H})},f(c)>$ where $c$ is a cycle without
exits based at a vertex $v$ in $E^{0}\backslash H$ and $E^{0}\backslash
H=M(v)$.

Suppose $P$ has co-height $1$. So if $P\subset Q$ for some prime ideal $Q$,
then there is no prime ideal $I$ such that $P\subsetneqq I\subsetneqq Q$ and
no prime ideal $J$ such that $Q\subsetneqq J\subsetneqq L_{K}(E)$ and that
there is at least one such prime ideal $Q$. We claim that $Q$ cannot be a
non-graded prime ideal. Suppose, on the contrary, $Q=<I_{(H^{\prime
},B_{H^{\prime}})},g(c^{\prime})>$ where $H^{\prime}=Q\cap E^{0}$, $c^{\prime
}$ is a cycle without exits in $E^{0}\backslash H^{\prime}$. By the Remark
preceding Corollary \ref{row-finite height 1}, $I_{(H^{\prime},B_{H^{\prime}%
})}$ properly contain $P$, contradicting the fact that there is no prime ideal
$I$ with $P\subsetneqq I\subsetneqq Q$. Thus every prime ideal $Q\supsetneqq
P$ must be a graded prime ideal, say, $Q=I_{(H^{\prime},B_{H^{\prime}})}$ with
$H^{\prime}\supsetneqq H$. Clearly $v\in H^{\prime}$. Now $M=E^{0}\backslash
H^{\prime}$ is a maximal tail properly contained in $E^{0}\backslash H$. We
claim that $M$ satisfies the Condition (L). Because, otherwise, there will be
a cycle $c^{\prime\prime}$ without exits in $M$ based at a vertex $v^{\prime}$
and $M=M(v^{\prime})$. \ Then for any irreducible polynomial $p(x)\in
K[x,x^{-1}]$, the ideal $<Q,p(c^{\prime\prime})>$ will be, by Theorem
\ref{Main Theorem}, a prime ideal containing $Q$, a contradiction. If there is
a maximal tail $N$ inside $E^{0}\backslash H$ such that $M\subsetneqq N$ then
since $H\subset X=E^{0}\backslash N\subset H^{\prime}$, we have $P\subsetneqq
I=I_{(X,B_{X})}\subsetneqq Q$, a contradiction. $\ $If, on the other hand,
there is a non-empty maximal tail $N^{\prime}\subsetneqq M$ then for
$Y=E^{0}\backslash N^{\prime}$, the ideal $J=I_{(Y,B_{Y})}$ is a prime ideal
satisfying $Q\subsetneqq J\subsetneqq L_{K}(E)$, a contradiction. Hence the
non-empty maximal tails properly contained in $E^{0}\backslash H$ form an
antichain under set inclusion and satisfy the Condition (L). Note that there
is at least one such maximal tail in $E^{0}\backslash H$, since there is at
least one prime ideal $Q\supsetneqq P$.

Conversely, suppose $P$ satisfies Condition (i) of the Theorem. By hypothesis,
the set $S$ of non-empty maximal tails properly contained in $E^{0}\backslash
H$ is non-empty. Let $M$ be an arbitrary member of this set $S$ and let
$H^{\prime}=E^{0}\backslash M$. \ Then $Q=I_{(H^{\prime},B_{H^{\prime}})}$ is
a prime ideal containing $P$. If there is a prime ideal $I$ such that
$P\subsetneqq I\subsetneqq Q$ then for $X=I\cap E^{0}$, we have $H\subsetneqq
X\subsetneqq H^{\prime}$ and so $N=E^{0}\backslash X$ is a maximal tail that
satisfies $M\subsetneqq N\subsetneqq E^{0}\backslash H$, thus contradicting
the hypothesis. \ Suppose there is a prime ideal $J$ with $Q\subsetneqq
J\subsetneqq L_{K}(E)$. Let $J\cap E^{0}=Y$. We claim $Y\neq H^{\prime}$.
Indeed if $Y=H^{\prime}$, then first of all $J$ cannot be a graded ideal since
then $B_{Y}=B_{H^{\prime}}$ and $J=I_{(Y,B_{Y})}=I_{(H^{\prime},B_{H^{\prime}%
})}=Q$, a contradiction. On the other hand if $J$ were a non-graded prime
ideal, then there must be a cycle without exits in $E^{0}\backslash
Y=E^{0}\backslash H^{\prime}$, again a contradiction since $E^{0}\backslash
H^{\prime}$ satisfies Condition (L). Thus $Y\supsetneqq H^{\prime}$. But then
the maximal tail $E^{0}\backslash Y$ satisfies $E^{0}\backslash Y\subsetneqq
M$, a contradiction to the hypothesis. This shows that $P$ is a prime ideal of
co-height $1$. \ 

Case (ii): Let $P$ be a graded prime ideal of the form $I_{(H,B_{H})}$.

Suppose $P$ has co-height $1$, so there is a prime ideal $Q\supset P$ such
that there is no prime ideal $I$ such that $P\subsetneqq I\subsetneqq Q$ and
no prime ideal $J$ such that $Q\subsetneqq J\subsetneqq L_{K}(E)$. Let $Q\cap
E^{0}=H^{\prime}$. \ 

If $Q$ is a non-graded prime ideal, say, $Q=<I_{H^{\prime},B_{H^{\prime}}%
)},p(c)>$, then necessarily, $H^{\prime}=H$ since otherwise, by Lemma 3.6,
$I_{(H^{\prime},B_{H^{\prime}})}$ will be a prime ideal that satisfies
$P\subsetneqq I_{(H^{\prime},B_{H^{\prime}})}\subsetneqq Q$. Thus
$Q=<I_{(H,B_{H})},p(c)>$, with $c$ a cycle without exits in $E^{0}\backslash
H$ based at a vertex $v$ and $E^{0}\backslash H=M(v)$. Also $E^{0}\backslash
H$ cannot contain any proper non-empty maximal tail $M$ since otherwise
$X=E^{0}\backslash M$ will be a hereditary saturated subset containing $v$ and
hence $c^{0}$, and so the ideal $I_{(X,B_{X})}$ is a prime ideal containing
$Q$, a contradiction. This shows that $P$ satisfies Condition (ii)(b).

Suppose $Q$ be a graded prime ideal. By Theorem \ref{Main Theorem}0,
$Q=I_{(H^{\prime},B_{H^{\prime}})}$ or $I_{(H^{\prime},B_{H^{\prime}%
}\backslash\{u\}}$ for some $u\in B_{H^{\prime}}$. Since $I_{(H^{\prime
},B_{H^{\prime}})}\supsetneqq I_{(H^{\prime},B_{H^{\prime}}\backslash\{u\}}$
and there is no prime ideal \ properly containing $Q$, $Q\neq$ $I_{(H^{\prime
},B_{H^{\prime}}\backslash\{u\}}$. Thus $Q=I_{(H^{\prime},B_{H^{\prime}})}$
and $H^{\prime}\neq H$ since, otherwise, $Q=I_{(H,B_{H})}=P$, a contradiction.
Now $M=E^{0}\backslash H^{\prime}$ is a maximal tail properly contained in
$E^{0}\backslash H$ . If $N$ is a non-empty maximal tail and $N\subsetneqq M$
and $X=E^{0}\backslash N$, then $I_{(X,B_{X})}$ will be a prime ideal
satisfying $Q\subsetneqq I_{(X,B_{X})}\subsetneqq L_{K}(E)$, a contradiction.
Also, if there is a maximal tail $N^{\prime}$ with $M\subsetneqq N^{\prime
}\subsetneqq E^{0}\backslash H$, then for $Y=E^{0}\backslash N^{\prime}$, the
prime ideal $I_{(Y,B_{Y})}$ satisfies $P\subsetneqq I_{(Y,B_{Y})}\subsetneqq
Q$, a contradiction. Also, if the maximal tail $M$ does not satisfy Condition
(L), then there will be a cycle without exits based at a vertex $v$ in $M$ so
that $M=M(v)$. Then for an irreducible polynomial $f(x)\in K[x,x^{-1}]$, the
prime ideal $<Q,f(c)>$ properly contains $Q$, a contradiction. Hence $M$
satisfies Condition (L). We thus have shown that $P$ satisfies Condition (ii)a.

Conversely,\ suppose the prime ideal $P=I_{(H,B_{H})}$ satisfies the stated
properties in Condition (ii) of the Theorem.

Specifically, assume Condition (ii)b so that there is a cycle $c$ without
exits in $E^{0}\backslash H$ based at a vertex $v$, $E^{0}\backslash H=M(v)$
and $E^{0}\backslash H$ contains no non-empty proper maximal tails. Then\ for
some irreducible polynomial $f(x)\in K[x,x^{-1}]$, $Q=<I_{(H,B_{H})},f(c)>$ is
a prime ideal containing $P$. Clearly, there is no ideal $I$ such that
$P\subsetneqq I\subsetneqq Q$. Now there cannot be a prime ideal $J$, with
$Q\subsetneqq J\subsetneqq L_{K}(E)$. Because, if $Y=J\cap E^{0}$, first of
all $Y\neq H$ since, otherwise, $J$ has to be a non-graded prime ideal of the
form $J=<I_{(H,B_{H})},g(c)>$ for some irreducible polynomial $g(x)\in
K[x,x^{-1}]$ and this is impossible by the Remark at the end of Section 3. But
if $Y\supsetneqq H$ then $M=E^{0}\backslash Y$ will be a maximal tail properly
contained in $E^{0}\backslash H$, a contradiction. This shows that $P$ has
co-height $1$.

Suppose now that Condition (ii)a holds. By hypothesis, the set $S$ of
non-empty maximal tails properly contained in $E^{0}\backslash H$ is
non-empty. Now repeat the proof of the converse in Case (i) above\ verbatim to
conclude that $P$ has co-height $1$.

Case (iii): Let $P$ be a graded prime ideal of the form $P=$ $I_{(H,B_{H}%
\backslash\{u\})}$. Since $E^{0}\backslash H=M(u)$ satisfies the MT-3
condition, $I_{(H,B_{H})}$ will always be a prime ideal containing $P$. It is
readily seen that there is no prime ideal $I$ with $I_{(H,B_{H}\backslash
\{u\})}\subsetneqq I\subsetneqq I_{(H,B_{H})}$. So $P$ will have co-height $1$
exactly when there are no prime ideals $P^{\prime}\supsetneqq I_{(H,B_{H})}$.
If a prime ideal $P^{\prime}\supsetneqq I_{(H,B_{H})}$ with $X=P^{\prime}\cap
E^{0}$, then either $X=H$ or $X\supsetneqq H$. If $X=H$, then $P^{\prime}$
must be non-graded and so, by Theorem \ref{Main Theorem}, $E^{0}\backslash
H=E^{0}\backslash X$ will have a cycle without exits. This is possible if and
only if $E^{0}\backslash H$ does not satisfy Condition (L). On the other hand,
$X\supsetneqq H$ if and only if $E^{0}\backslash X$ is a proper non-empty
maximal tail in $E^{0}\backslash H$. This proves that $I_{(H,B_{H}%
\backslash\{u\})}$ has co-height $1$ if and only if $E^{0}\backslash H$
satisfies the condition (L) and contains no non-empty proper maximal tails.
\end{proof}

\begin{example}
\label{Ex co-ht-1}1. Let $E$ be the graph
%TCIMACRO{\FRAME{fhF}{1.2211in}{1.4001in}{0pt}{}{}{example37.jpg}%
%{\special{ language "Scientific Word";  type "GRAPHIC";
%maintain-aspect-ratio TRUE;  display "USEDEF";  valid_file "F";
%width 1.2211in;  height 1.4001in;  depth 0pt;  original-width 1.1874in;
%original-height 1.3647in;  cropleft "0";  croptop "1";  cropright "1";
%cropbottom "0";  filename 'Example37.jpg';file-properties "XNPEU";}} }%
%BeginExpansion
\begin{figure}
[h]
\begin{center}
\includegraphics[
natheight=1.364700in,
natwidth=1.187400in,
height=1.4001in,
width=1.2211in
]%
{Example37.jpg}%
\end{center}
\end{figure}
%EndExpansion

\end{example}

Let $c$ denotes the loop at the vertex $v$. Clearly $E^{0}$ satisfies the MT-3
condition and so, by Theorem \ref{Main Theorem}, the ideal $P=<v-c>$ is a
non-graded prime ideal of $L_{K}(E)$. Now $H=\{u,v\}$ is a hereditary
saturated subset and $P\subset Q=<H>$ which is a maximal ideal of $L_{K}(E)$,
since $L_{K}(E)/Q\cong L_{K}(E^{0}\backslash H)\cong$ $L(1,2)$, a Leavitt
algebra which is a simple ring (see \cite{Abrams1}). Thus $P$ is a co-height
one non-graded prime ideal. Note that condition (i) of the above theorem is
trivially satisfied.

2. In Example \ref{Ex 4.6}, the ideal $Q=I_{(H,B_{H})}$ is the only co-height
$1$ prime ideal of $L_{K}(E)$ and is graded. Note that Condition (ii) of the
above theorem holds for $E^{0}\backslash H$.

3. Consider the Example \ref{Ex 4.5}. For each $j\geq1$, the prime ideal
$P_{j}$ has co-height $1$. Note that Condition (iii) of the above Theorem
holds. It is interesting to note that each $P_{j}$ is also a height $1$ prime ideal.

\section{Prime homomorphic images of $L_{K}(E)$\textbf{.}}

Finally, we consider the prime homomorphic images of a Leavitt path algebra
$L_{K}(E)$. Should they all be again Leavitt path algebras ? \ Fruitful
correspondence with Ken Goodearl resulted in a \ definitive answer to this
question in the case of a finite graph $E$ and this appears as Proposition
4.4\ in \cite{Abrams2}. As an application\ of Theorem \ref{Main Theorem}, we
get a complete description of the prime homomorphic images of $L_{K}(E)$ for
arbitrary graphs $E$. The same proof shown in \cite{Abrams2}, with minor
modifications, works for arbitrary graphs $E$. We outline the proof for the
sake of completeness.

\begin{proposition}
\label{Hom image}Let $E$ be an arbitrary graph and let $P$ be a prime ideal of
$L_{K}(E)$. Then either $T=L_{K}(E)/P$ is isomorphic to a Leavitt path algebra
or $T/Soc(T)$ is isomorphic to a Leavitt path algebra. In the latter case,
$Soc(T)$ is a simple ring being a direct sum of isomorphic simple left ideals
of $T$.
\end{proposition}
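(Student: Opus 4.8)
The plan is to follow the trichotomy of prime ideals supplied by Theorem \ref{Main Theorem}. If $P$ is a graded prime ideal, i.e. of type (i) or (ii), then $P = I_{(H,S)}$ with $S = B_H$ or $S = B_H\setminus\{u\}$, and Theorem 5.7 of \cite{Tomforde 20} gives $T = L_K(E)/P \cong L_K(E\setminus(H,S))$ outright; so $T$ is itself a Leavitt path algebra and the first alternative holds. All the work is therefore in the non-graded case (iii), which I would first put into a standard form. Writing $Q = I_{(H,B_H)} \subseteq P$ and $F = E\setminus(H,B_H)$, the isomorphism $L_K(E)/Q \cong L_K(F)$ carries $P/Q$ to the ideal generated by $f(c)$, where by Theorem \ref{Main Theorem} $c$ is now a cycle \emph{without exits} based at $v$, $F^0 = M(v)$, and $f(x)$ is irreducible in $K[x,x^{-1}]$. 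Thus $T \cong L_K(F)/\langle f(c)\rangle$, and I may argue entirely inside $L_K(F)$.

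Next I would produce the field and a minimal left ideal. Since $c$ has no exits, $vL_K(F)v \cong K[x,x^{-1}]$ as recalled in the Preliminaries, and passing to the quotient yields $\bar v T \bar v \cong K[x,x^{-1}]/\langle f(x)\rangle =: \mathbb{F}$, a field because $f$ is irreducible. As $\bar v T\bar v$ is a division ring, the standard criterion ($eRe$ a division ring $\Rightarrow Re$ minimal) shows $T\bar v$ is a minimal left ideal, so $Soc(T)\neq 0$. Crucially $T$ is a prime ring (being $L_K(E)/P$ with $P$ prime), and the socle of a prime ring with nonzero socle is a simple ring which is a direct sum of mutually isomorphic minimal left ideals. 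This already delivers the two assertions about $Soc(T)$ in the statement; it remains to identify $T/Soc(T)$.

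For that I would show $Soc(T) = L$, where $L$ is the image in $T$ of the ideal $\langle c^0\rangle$ generated by the vertices of the cycle. Because $c$ has no exits, $c^0 = T(v)$ is hereditary, and $\langle c^0\rangle$, being generated by vertices, is a graded ideal, hence of the form $I_{(\bar H,S_0)}$ by \cite{Tomforde 20}, where $\bar H = \langle c^0\rangle\cap F^0$; moreover $f(c)\in\langle c^0\rangle$. The inclusion $L\subseteq Soc(T)$ is immediate, since every $\bar w$ with $w\in c^0$ satisfies $\bar w T\bar w\cong\mathbb{F}$ and so generates a minimal left ideal as above. Granting the reverse inclusion, $Soc(T) = L = I_{(\bar H,S_0)}/\langle f(c)\rangle$, whence $T/Soc(T) \cong L_K(F)/I_{(\bar H,S_0)} \cong L_K(F\setminus(\bar H,S_0))$ by \cite{Tomforde 20}, a Leavitt path algebra; the degenerate case $\bar H = F^0$ gives $T$ simple with $T/Soc(T)=0$, the Leavitt path algebra of the empty graph.

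The main obstacle is the reverse inclusion $Soc(T)\subseteq L$, i.e. that every minimal left ideal of $T$ sits ``at the cycle.'' Here I would exploit $F^0 = M(v)$, so every vertex $p$ satisfies $p\geq v$. If $Te$ is minimal with $eTe$ a division ring and $\bar e$ lay outside $L$, its image would generate a minimal left ideal of $T/L \cong L_K(F\setminus(\bar H,S_0))$; but a case analysis on the generating vertex $p$ (using $p\geq v$, that $c$ has no exits, and that $c$ is a cycle without $K$) should show that whenever $\bar p\,T\,\bar p$ is a division ring the vertex $p$ already lies in the saturation $\bar H$. Indeed, if $p$ reaches $v$ through a unique simple path of regular vertices it is saturated into $\bar H$, whereas a bifurcation below $p$ or an infinite emission toward the cycle makes $\bar p T\bar p$ strictly larger than a field, as it then contains $\mathbb{F}$-matrix units indexed by the distinct descending paths to $c$. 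This forces $\bar e\in L$, yielding $Soc(T)\subseteq L$ and completing the identification.
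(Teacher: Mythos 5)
Your graded case, the reduction to $T\cong L_K(F)/\langle f(c)\rangle$, the identification $\bar v T\bar v\cong K[x,x^{-1}]/\langle f(x)\rangle$, and the deduction that $T\bar v$ is a minimal left ideal of the prime ring $T$ all track the paper's proof. The genuine gap is exactly at the step you flag as the main obstacle: the inclusion $Soc(T)\subseteq L$. Your proposed case analysis examines vertices $p$ with $\bar p T\bar p$ a division ring, but minimal left ideals of $T$ need not be generated by (images of) vertices, so even a complete vertex-by-vertex analysis would not account for all of $Soc(T)$; reducing an arbitrary minimal left ideal to one sitting over a vertex is essentially the content of the socle theory of Leavitt path algebras (cf.\ \cite{Aranda Pino 10}), which you neither invoke nor reprove. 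The phrase ``should show'' marks a real hole, and the route through minimal left ideals of $T/L$ does not close it.

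The fix is a standard fact that you half-quote but never exploit, and it is precisely what the paper uses: in a prime ring the socle, when nonzero, is contained in \emph{every} nonzero two-sided ideal (if $M$ is a minimal left ideal and $I$ a nonzero ideal, then $IM$ is a nonzero submodule of the simple module $M$, so $IM=M\subseteq I$); equivalently, $Soc(T)$ is the two-sided ideal generated by any single minimal left ideal. Since your $L$ is a nonzero two-sided ideal of $T$ (it contains $\bar v\neq 0$), this gives $Soc(T)\subseteq L$ in one line, and together with your easy inclusion $L\subseteq Soc(T)$ yields $Soc(T)=L$. This is the paper's argument in slightly different clothing: there one writes $Soc(T)=\langle\bar v\rangle=(\langle v\rangle+P)/P$, which coincides with your $L$ because $\langle v\rangle=\langle c^0\rangle$ in $L_K(F)$ (as $c$ has no exits, $T(v)=c^0$), and then observes that $P+\langle v\rangle=I_{(H,B_H)}+\langle v\rangle$ is a sum of graded ideals, hence graded, so that $T/Soc(T)\cong L_K(E)/(P+\langle v\rangle)$ is a Leavitt path algebra by Theorem 5.7 of \cite{Tomforde 20}. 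With that single substitution for your case analysis, your proof becomes correct and is essentially the paper's.
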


\begin{proof}
If $P=I_{(H,S)}$ is a graded ideal, then, by \cite{Tomforde 20},
$T=L_{K}(E)/P\cong L_{K}(E\backslash(H,S))$. Suppose $P$ is a non-graded prime
ideal. By Theorem \ref{Main Theorem}, $P=<I_{(H.B_{H})},f(c)>$ where $c$ is a
unique cycle without K in $E$ based at a vertex $v$ and $f(x)$ is an
irreducible polynomial in $K[x,x^{-1}]$. Note that $v\notin B_{H}$. As pointed
out in the proof of Theorem \ref{Primitive Ideal Thm} (i), for the idempotent
$\bar{v}=v+P$, we have $\bar{v}T\bar{v}$ is a field. This implies that
$T\bar{v}$ is then a simple left ideal of $T$ (by Proposition 1, Chapter 4,
Page 65 in \ \cite{Jacobson 15} (observing that the prime ring $T$ has no
non-zero nilpotent one-sided ideals)) . Thus $T\bar{v}\subset Soc(T)$. Since
$T$ is a prime ring, its socle is a direct sum of isomorphic simple left
ideals and, in particular, is the two-sided ideal generated by any simple left
ideal in it. We then conclude that $Soc(T)=<\bar{v}>=(<v>+P)/P$, the ideal
generated by $\bar{v}$. Now $P+<v>=I_{(H,B_{H})}+<v>=J$ is a graded ideal
(being the sum of two graded ideals). Thus $T/Soc(T)=(L_{K}%
(E)/P)/(P+<v>/P)\cong L_{K}(E)/(P+<v>)$ which is isomorphic to the Leavitt
path algebra $L_{K}(E\backslash(H^{\prime},S^{\prime}))$ where $H^{\prime
}=J\cap E^{0}$ and $S^{\prime}=\{w\in B_{H^{\prime}}:w^{H^{\prime}}\in J\}$.
\end{proof}

REMARK: In the above Proposition, if $E^{0}$ (or more generally, $(E\backslash
H,S))^{0}$) is countable, then $Soc(T)$ will be a direct sum of countably many
isomorphic simple left ideals and, in this case, $Soc(T)\cong L_{K}(F)$ where
$F$ is the infinite straight line graph \ $\overset{v_{1}}{\bullet}%
\rightarrow\overset{v_{2}}{\bullet}\rightarrow\overset{v_{3}}{\bullet
}\rightarrow\bullet\rightarrow\bullet\cdot\cdot\cdot\cdot$ . So $T$ can then
be realized as an extension of a Leavitt path algebra by another Leavitt path
algebra. Moreover, define a new graph $G$ by forming the disjoint union of the
graphs $F$ and $E\backslash(H^{\prime},S^{\prime})$ (which was defined in the
proof of Proposition \ref{Hom image}) and connecting each line point in
$E\backslash(H^{\prime},S^{\prime})$ to the vertex $v_{1}$ of the graph $F$ by
an edge. Then in the graph $G$, the line points are precisely the vertices of
the graph $F$ and the quotient graph $G\backslash F$ is the same as the graph
$E\backslash(H^{\prime},S^{\prime})$. Then the Leavitt path algebra $L_{K}(G)$
has the property that $Soc(L_{K}(G))\cong Soc(T)$ and $L_{K}(G)/Soc(L_{K}%
(G))\cong T/Soc(T)$.

\textbf{Acknowledgement}: The author thanks Chris Smith for his help in the
proof of Corollary \ref{Condition K}.

\end{document}